\numberwithin{equation}{section}
\theoremstyle{plain}
\newtheorem*{theorem*}{Theorem}
\newtheorem{theorem}{Theorem}
\numberwithin{theorem}{section}
\newtheorem{proposition}[theorem]{Proposition}
\newtheorem{lemma}[theorem]{Lemma}
\newtheorem{corollary}[theorem]{Corollary}
\newtheorem{question}{Question}
\theoremstyle{definition}
\newtheorem{definition}[theorem]{Definition}
\newtheorem{remark}[theorem]{Remark}
\newtheorem{example}[theorem]{Example}
\DeclareMathOperator{\rank}{rank}
\title{Geometry of low nonnegative rank matrix completion}
\author{Kaie Kubjas and Lilja Mets\"alampi}
\begin{document}
\maketitle

\begin{abstract}
We study completion of partial matrices with nonnegative entries to matrices of nonnegative rank at most $r$ for some $r \in \mathbb{N}$. Most of our results are for $r \leq 3$. We show that a partial matrix with nonnegative entries has a nonnegative rank-1 completion if and only if it has a rank-1 completion. This is not true in general when $r \geq 2$. For $3 \times 3$ matrices, we characterize all the patterns of observed entries when having a rank-2 completion is equivalent to having a nonnegative rank-2 completion. If a partial matrix with nonnegative entries has a rank-$r$ completion that is nonnegative, where $r \in \{1,2\}$, then it has a nonnegative rank-$r$ completion. We will demonstrate examples for $r=3$ where this is not true. We do this by introducing a geometric characterization for nonnegative rank-$r$ completion employing families of nested polytopes which generalizes the geometric characterization for nonnegative rank introduced by Cohen and Rothblum (1993).
\end{abstract}

\section{Introduction}\label{sec:introduction}

Let $M \in \mathbb{R}_{\geq 0}^{p \times q}$ be a real matrix with nonnegative entries. 
A size-$r$ \textit{nonnegative factorization} of $M$ is $M=AB$ where $A \in \mathbb{R}_{\geq 0}^{p \times r}$ and $B \in \mathbb{R}_{\geq 0}^{r \times q}$.
The \textit{nonnegative rank} of $M$ is the smallest $r \in \mathbb{N}$ for which there exists a size-$r$ nonnegative factorization of $M$. It is denoted by $\rank_+(M)$. The goal of this paper is to study completion of matrices with missing entries (partial matrices) to matrices of nonnegative rank at most $r$, for a fixed $r \in \mathbb{N}$.

Matrix completion problems come up in applications with incomplete data. 
Examples of such applications include recommender systems, which can be exemplified by the famous Netflix problem \cite{koren2009matrix}; user data is stored in a matrix with rows corresponding to each user and columns corresponding to titles.
The observed entries correspond to a user's rating of titles. 
The problem is then to recommend titles to users based on the observed data in the matrix.
Other applications with incomplete data arise in computer vision \cite{tomasi1992shape}, image compression and restoration \cite{linonlocal}, wireless communication \cite{limillimeter}, and many others; see \cite{nguyen2019low} and the references therein for more examples.
In many examples, the low-rank assumption is natural \cite{udellbigdata}. 
Furthermore, in many applications, the data is naturally nonnegative, and the nonnegative rank has useful interpretations in terms of the structure of the data, and therefore it is more natural to ask for the existence of low nonnegative rank completions of matrices. 
For example, the user ratings in the Netflix problem and the data (pixels) in image reconstruction are nonnegative.

Some questions that one can ask about low nonnegative rank matrix completion are: Given a partial matrix, does it have a nonnegative rank-$r$ completion? If yes, is the nonnegative rank-$r$ completion unique?  If a partial matrix has a rank-$r$ completion, does it always have a nonnegative rank-$r$ completion? Is every rank-$r$ completion of a partial matrix that is nonnegative also a nonnegative rank-$r$ completion of the partial matrix? And finally, how to complete a partial matrix to a nonnegative rank-$r$ matrix? We have summarized the questions and our results studying these questions in~\Cref{table:questions}. 

Analogous questions have been widely studied for rank-$r$ completion of partial matrices. Rank-$1$ completability has been investigated in~\cite{cohen1989ranks,hadwin2006rank,kiraly2013error} and is fully understood. For rank-$2$ completability, Bernstein gives a combinatorial characterization of independent sets in the algebraic matroid associated to rank-2 matrices~\cite{BERNSTEIN20171}. Independent sets correspond to positions of observed entries such that any partial matrix with these observed entries has a rank-2 completion. Singer and Cucuringu use ideas from rigidity theory to study which patterns of observed entries lead to finite and unique rank-$r$ completions~\cite{singer2010uniqueness}. The results mentioned so far use algebraic, combinatorial and geometric tools and hold for specific patterns of observed entries. The results in~\cite{singer2010uniqueness} hold only for generic matrices, and are not true for all matrices. 
In a separate line of research~\cite{candes2012exact,keshavan2008learning,keshavan2010matrix}, bounds on the number of randomly observed entries are given that guarantee the reconstruction of the original matrix together with algorithms for reconstruction. 
The development of matrix completion algorithms constitutes another active area of research~\cite{fazel2002matrix,srebro2003weighted,cai2010singular}.
A survey of low-rank matrix completion can be found in \cite{nguyen2019low}.

Much less is known for low nonnegative rank completion.
Low-rank matrix completion of nonnegative matrices without taking into account nonnegative rank has been studied in \cite{SONG2020106300}. 
Previous work on low nonnegative rank completion has concentrated on algorithms for the completion of a partial matrix to a matrix of low nonnegative rank and their efficiency \cite{xu2012alternating, guglielmi2020efficient, thanh2024minimum}.
A common approach in these algorithms is to combine techniques from both nonnegative factorizations and low-rank matrix completions to approximate the partial matrix with the product of two nonnegative matrices.
The approximation is done by solving a constrained optimization problem with numerical methods such as constrained least squares method \cite{xu2012alternating}, constrained gradient system \cite{guglielmi2020efficient} or MinVol \cite{thanh2024minimum}.
Despite these advances, the study of low nonnegative rank completion remains relatively unexplored.

We explore the geometry of low nonnegative rank matrix completion.
Most of our results are for nonnegative rank-$r$ completion, where $r \in \{1,2,3\}$. The results are summarized in~\Cref{table:questions}. Our first main result is that a partial matrix with nonnegative entries has a nonnegative rank-1 completion if and only if it has a rank-1 completion (\Cref{lem:rank_1_nonnegative_completion_iff_completion}). This is not true in general when $r \geq 2$. As our second main result, for $3 \times 3$ matrices we characterize all the patterns of observed entries when having a rank at most two completion is equivalent to having a nonnegative rank at most two completion (\Cref{thm:3x3_nonnegative_rank_2}). This characterization has somewhat similar flavor to various results in the low rank matrix completion literature~\cite{singer2010uniqueness,BERNSTEIN20171} that characterize when every generic partial matrix with fixed pattern of observed entries is rank-$r$ completable or has finitely many rank-$r$ completions. 

If a partial matrix with nonnegative entries has a rank-$r$ completion that is nonnegative, where $r \in \{1,2\}$, then it has a nonnegative rank-$r$ completion (\Cref{lem:rank_1_nonnegative_completion_iff_completion,lem:nonnegative-rank-2-completion}). This is not true for $r \geq 3$. In the case of $r=3$ and one missing entry, we construct an example by studying the projection map that forgets one entry of the variety of matrices of rank at most $r$ (\Cref{example:one_missing_entry_no_nonnegative_rank_three_completion}). For $r=3$ and two missing entries, we use a geometric characterization for nonnegative rank-$r$ completion employing families of nested polytopes to construct examples where a partial matrix with nonnegative entries has a nonnegative completion of rank three but no nonnegative rank-$3$ completion (\Cref{ex:square,ex:11_22_counter_example}). The geometric characterization for nonnegative rank-$r$ completion generalizes the geometric characterization for nonnegative rank introduced in~\cite{cohen1993nonnegative,vavasis2010complexity}. This geometric characterization is the third main contribution of the paper (\Cref{thm:existence_of_nonnegative_rank-r_completion}).

As it can be seen from~\Cref{table:questions}, we do not study uniqueness and finiteness of completions and completion algorithms for nonnegative rank-$r$ completions where $r \geq 2$. In $r=1$ case, the results follow directly from the results for rank-$1$ completion. Studying the uniqueness and finiteness of nonnegative rank-$r$ completions would be very interesting but more challenging than for rank-$r$ completions since finiteness is not anymore a generic property. The nonnegative rank-$r$ completion algorithms fall outside the focus of the present work and have been studied in~\cite{thanh2024minimum}. We hope that the present paper will motivate further work on this topic.

\begin{table}[ht] 
\begin{tabular}{| c | c | c | c | c |}
\hline
\thead{\#} & \thead{Question} & \thead{$\rank_+ = 1$} & \thead{$\rank_+ = 2$} & \thead{$\rank_+ \geq 3$}\\
\hline
1 & \makecell{Nonnegative  rank-$r$ completability} & \makecell{Prop.~\ref{lem:rank_1_nonnegative_completion_iff_completion}\\ \& Cor.~\ref{cor:nonnegative_rank-1_completion_existence_and_uniqueness}} & \makecell{Prop.~\ref{lem:nonnegative-rank-2-completion} \&\\  Thm.~\ref{thm:3x3_nonnegative_rank_2} ($3 \times 3$)}  & \makecell{Thm.~\ref{thm:existence_of_nonnegative_rank-r_completion}}\\
\hline
2 & \makecell{If there is a rank-$r$ completion, \\ is there also a nonnegative rank-$r$ completion?} & \makecell{Yes: \\ Prop.~\ref{lem:rank_1_nonnegative_completion_iff_completion}} & \makecell{No: \\ Ex.~\ref{ex:rank-2_counter_example_diagonal_missing_entries}} & \makecell{No: \\ Ex.~\ref{example:one_missing_entry_no_nonnegative_rank_three_completion}, \ref{ex:square}, \ref{ex:11_22_counter_example}}\\
\hline
3 & \makecell{Is every rank-$r$ completion that is nonnegative \\ also a nonnegative rank-$r$ completion?} & \makecell{Yes: \\ Prop. \ref{lem:rank_1_nonnegative_completion_iff_completion}} & \makecell{Yes: \\ Prop.~\ref{lem:nonnegative-rank-2-completion}} & \makecell{No: \\ Ex.~\ref{example:one_missing_entry_no_nonnegative_rank_three_completion}, \ref{ex:square}, \ref{ex:11_22_counter_example}}\\
\hline
4 & \makecell{Uniqueness or finiteness of\\ nonnegative rank-$r$
completion} & \makecell{Cor.~\ref{cor:nonnegative_rank-1_completion_existence_and_uniqueness}} & \makecell{} & \makecell{}\\
\hline
5 & \makecell{Completion algorithm} & \makecell{Sec.~\ref{sec:nonnegative_rank_one_completion}} & \makecell{} & \makecell{}\\
\hline
\end{tabular}
\caption{A summary of our main research questions and results.}
\label{table:questions}
\end{table}

The outline of the paper is the following. In~\Cref{sec:background} we recall the geometric description of nonnegative rank (\Cref{sec:geometric_description}) and give some basic results about rank-$r$ matrix completion (\Cref{sec:rank_r_completion}). In~\Cref{sec:nonnegative_rank_one_and_two_completion} we study nonnegative rank-1 (\Cref{sec:nonnegative_rank_one_completion}) and nonnegative rank-2 (\Cref{sec:nonnegative_rank_two_completion}) matrix completion. In~\Cref{sec:one_missing_entry} we consider the case of one missing entry. Finally, in~\Cref{sec:nonnegative_rank_three_completion} we investigate nonnegative rank-$3$ matrix completion: In~\Cref{sec:geometric_description_of_nonnegative_rank_completion} we introduce a geometric characterization of nonnegative rank-$r$ matrix completion. In~\Cref{sec:missing_entries_11_and_21,sec:missing_entries_11_and_22} we study nonnegative rank-$3$ completion of $4 \times 4$ matrices with two missing entries. The code for computations is available at
\url{https://github.com/kaiekubjas/geometry-of-low-nonnegative-rank-matrix-completion}.

\textbf{Notation and conventions.} Unless specified otherwise, we will consider $p \times q$ matrices with real entries. Let $E \subseteq [p] \times [q]$ denote the set of observed entries. We consider partial matrices $M_E \in \mathbb{R}^E$ with the entries in the set $E$ observed. By a nonnegative matrix we mean a matrix with nonnegative entries and by a partial nonnegative matrix we mean a partial matrix with nonnegative entries. Let $M \in \mathbb{R}^{p \times q}$. For index sets $I \subseteq [p]$, $J \subseteq [q]$, we use the following submatrix notation: $M_{I,J}$ denotes the submatrix of $M$ with rows indexed by $I$ and columns indexed by $J$;  $M_{I,:}$ denotes the submatrix of $M$ with rows indexed by $I$ and all columns; $M_{:,J}$ denotes the submatrix of $M$ with all rows and columns indexed by $J$.
For $i \in [p]$, $j \in [q]$, $\hat{i}$ denotes the set $[p]\backslash \{i\}$ and  $\hat{j}$ denotes the set $[q]\backslash \{j\}$. By $\operatorname{cone}(M)$, we denote the conic hull of the columns of the matrix $M$.
We use the notation $\subset$ for proper subsets and $\subseteq$ for any subsets.

\section{Background}\label{sec:background}

In this section, we will recall the geometric description of nonnegative rank (\Cref{sec:geometric_description}) and state some basic results about rank-$r$ matrix completion (\Cref{sec:rank_r_completion}).

\subsection{Geometric description of nonnegative rank} \label{sec:geometric_description}

Let $M \in \mathbb{R}_{\geq 0}^{p \times q}$ be a nonnegative matrix of rank $r$.
The problem of finding a size-$r$ nonnegative factorization of $M$ can be formulated geometrically via nested polyhedral cones in several equivalent ways.
This geometric interpretation is often the key tool with which nonnegative factorizations and their existence is studied.
The geometric interpretation was introduced by Cohen and Rothblum \cite{cohen1993nonnegative}; a widely-used alternative is due to Vavasis \cite{vavasis2010complexity}.
In this paper we use the description due to Vavasis.
Let $M=AB$ be a size-$r$ factorization of $M$.
Here $A,B$ need not be nonnegative matrices.
Define $\hat P = \operatorname{cone}(B)$, which is the conic hull of the columns of $B$, and $\hat Q = \{x \in \mathbb{R}^r \mid Ax \geq 0 \}$.
We have $\hat P \subseteq \hat Q$ since $a_ib_j = m_{ij} \geq 0$ for rows $a_i$ of $A$ and columns $b_j$ of $B$.
The matrix $M$ has a  size-$r$ nonnegative  factorization if and only if there exists a simplicial cone $\hat \Delta$ such that $\hat P \subseteq \hat \Delta \subseteq \hat Q$. 
Indeed, let $c_1, \dots, c_r \in \mathbb{R}^r$ generate the rays of $\hat \Delta$ and let $C \in \mathbb{R}^{r \times r}$ be a matrix with columns $c_1, \dots, c_r \in \mathbb{R}^r$. Then $C$ is invertible since $\hat \Delta$ is a simplicial cone. 
The rows of $C^{-1}$ define the facets of $\hat \Delta$. 
Since $\hat \Delta$ is nested between $\hat P$ and $\hat Q$, we have $AC \geq 0$ and $C^{-1}B \geq 0$.

By taking an affine slice of the nested cones corresponding to the nonnegative matrix $M$, one is able to interpret nonnegative factorizations and nonnegative rank via nested polytopes instead of nested cones.
This allows us to reduce the dimension by 1: instead of considering $r$-dimensional cones, we consider $r-1$-dimensional polytopes $P$ and $Q$, which can be particularly helpful in lower dimensions.
Nonnegative rank can be characterized as follows: a nonnegative matrix of rank $r$ has nonnegative rank $r$ if and only if there exists an $r$-simplex $\Delta$ such that $P \subseteq \Delta \subseteq Q$.
In the rest of the paper, we use $\hat P$ and $\hat Q$ to denote the cones and $P$ and $Q$ to denote the polytopes.

This geometric characterization of nonnegative rank can be used to prove the following theorem from \cite{cohen1993nonnegative}.

\begin{theorem}[{{\cite[Theorem 4.1]{cohen1993nonnegative}}}]\label{lem:nonnegative_rank_and_rank_equal_when_1_2}
    For nonnegative matrices of rank one and two, nonnegative rank is equal to rank.
\end{theorem}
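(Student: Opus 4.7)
The plan is to apply the geometric characterization of nonnegative rank recalled just above. Given any rank-$r$ factorization $M = AB$, we form the cones $\hat P = \operatorname{cone}(B) \subseteq \hat Q = \{x \in \mathbb{R}^r : Ax \geq 0\}$, and the existence of a size-$r$ nonnegative factorization of $M$ is equivalent to inserting a simplicial cone $\hat \Delta$ with $\hat P \subseteq \hat \Delta \subseteq \hat Q$.

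The key observation is dimensional: for $r \leq 2$, every pointed cone in $\mathbb{R}^r$ is automatically simplicial. A pointed cone in $\mathbb{R}$ is a single ray (one extreme ray), and a pointed cone in $\mathbb{R}^2$ has exactly two extreme rays. My plan is therefore simply to take $\hat \Delta := \hat Q$ and verify that $\hat Q$ is pointed. Pointedness follows directly from the rank hypothesis: since $A$ has full column rank $r$, the system $Ax = 0$ has only the trivial solution, whence $\hat Q \cap (-\hat Q) = \{0\}$. A size-$r$ nonnegative factorization is then read off as $M = (AC)(C^{-1}B)$, where the columns of $C$ generate the extreme rays of $\hat \Delta$; both factors are nonnegative because $\hat P \subseteq \hat \Delta \subseteq \hat Q$.

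The main point, really the only nontrivial one, is confirming pointedness of $\hat Q$; after that the argument is mechanical. One must also dispatch a few degenerate situations: the case $M = 0$ (for which both ranks are $0$), and, in the rank-$1$ case, the check that $\hat P$ is itself a ray rather than all of $\mathbb{R}$, so that the containment $\hat P \subseteq \hat Q$ is meaningful. Both of these follow from the rank and nonnegativity conditions on $M$: if $B$ had entries of both signs then $\hat P = \mathbb{R}$, forcing $A = 0$ and contradicting $\operatorname{rank}(A) = 1$, so sign compatibility is built in.
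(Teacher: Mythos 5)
Your argument is correct and follows exactly the route the paper indicates: the paper states this result as Cohen--Rothblum's Theorem 4.1 and remarks only that the geometric characterization of \Cref{sec:geometric_description} can be used to prove it, which is precisely what you carry out (take $\hat\Delta=\hat Q$, which is simplicial for $r\le 2$). The one point stated slightly loosely is that a pointed cone in $\mathbb{R}^2$ has two extreme rays only if it is also full-dimensional, but that is immediate here since $\hat Q \supseteq \hat P = \operatorname{cone}(B)$ and $\rank(B)=r$.
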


Denote by $\mathcal{V}_r^{p \times q}$ the variety of $p \times q$ matrices of rank $\leq r$ and by $\mathcal{M}_r^{p \times q}$ the set of matrices of nonnegative rank $\leq r$.
\Cref{lem:nonnegative_rank_and_rank_equal_when_1_2} implies that $\mathcal{M}_1^{p \times q} = \mathcal{V}_1^{p \times q} \cap \mathbb{R}^{p \times q}_{\geq 0}$ and $\mathcal{M}_2^{p \times q}=\mathcal{V}_2^{p \times q} \cap \mathbb{R}^{p \times q}_{\geq 0}$. 
If a nonnegative matrix has rank at least three, then its nonnegative rank is at least its rank, but in general the equality does not hold anymore.

In \cite{kubjas2015fixed} the authors provide a geometric algorithm for checking whether a nonnegative matrix has nonnegative rank at most three.
This algorithm uses the characterization of nonnegative factorizations in terms of nested polytopes $P \subseteq Q$ in $\mathbb{R}^2$.

\begin{theorem}[{{\cite[Corollary 4.5]{kubjas2015fixed}}}]\label{cor:rank-3_geometric_algorithm}
A matrix $M \in \mathbb{R}^{p \times q}_{\geq 0}$ has nonnegative rank at most three if and only if 
\begin{enumerate}
    \item $\rank(M) < 3$, or
    \item $\rank(M)=3$, and there exists a triangle $\Delta$ with $P \subseteq \Delta \subseteq Q$ such that a vertex of $\Delta$ coincides with a vertex of $Q$, or 
    \item $\rank(M)=3$, and there exists a triangle $\Delta$ with $P \subseteq \Delta \subseteq Q$ such that an edge of $\Delta$ contains an edge of $P$.
\end{enumerate}
\end{theorem}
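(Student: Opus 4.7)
The plan is to prove the two directions separately. For the easier implication $(\Leftarrow)$: in case (1), apply \Cref{lem:nonnegative_rank_and_rank_equal_when_1_2} to deduce $\rank_+(M) = \rank(M) < 3$. In cases (2) and (3), the hypothesis provides a triangle $\Delta$ with $P \subseteq \Delta \subseteq Q$, which by the geometric characterization of nonnegative rank in terms of nested polytopes (the $r=3$ instance of the simplex-nested-between-$P$-and-$Q$ criterion recalled in \Cref{sec:geometric_description}) immediately gives $\rank_+(M) \leq 3$. The extra conditions on vertex or edge coincidence are not used here; they are strictly stronger than what the characterization actually requires.

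For the harder implication $(\Rightarrow)$, suppose $\rank_+(M) \leq 3$. If $\rank(M) < 3$, we are in case (1). Otherwise $\rank(M) = 3$ and, by the same characterization, some triangle $\Delta_0$ with $P \subseteq \Delta_0 \subseteq Q$ exists. The task is to produce a nested triangle that additionally realizes (2) or (3). My approach is to exhibit such a triangle as a minimizer. The feasible set $\mathcal{T} := \{\Delta : P \subseteq \Delta \subseteq Q\}$, viewed as a subset of $(\mathbb{R}^2)^3 \cong \mathbb{R}^6$, is closed and bounded, hence compact, and the area functional is continuous. So an area-minimizing nested triangle $\Delta^*$ exists.

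The core claim is that $\Delta^*$ must satisfy (2) or (3), which I would argue by contrapositive. Suppose $\Delta^*$ satisfies neither condition. Then no vertex of $\Delta^*$ coincides with a vertex of $Q$ (every vertex of $\Delta^*$ is either interior to $Q$ or in the relative interior of an edge of $Q$), and no edge of $\Delta^*$ contains an edge of $P$ (any contact between an edge of $\Delta^*$ and $P$ occurs at isolated vertices of $P$). Under these assumptions, the active constraints on the six parameters of $\Delta^*$ are all of codimension one: each vertex-on-an-edge-of-$Q$ contributes one linear constraint, and each edge-of-$\Delta^*$-through-a-vertex-of-$P$ contributes one linear constraint. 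A careful local analysis should produce an admissible infinitesimal deformation that pushes at least one edge of $\Delta^*$ further inward toward $P$ while compensating motions keep the other vertices on their edges of $Q$; such a deformation strictly decreases area, contradicting the minimality of $\Delta^*$.

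The hard part will be this deformation argument. One must carry out the case analysis carefully: exactly how many vertex-on-edge and edge-through-vertex constraints are active, and whether the resulting linear system of admissible infinitesimal motions admits a descent direction for area in every configuration. Degenerate situations, such as two vertices of $\Delta^*$ lying on a common edge of $Q$, or an edge of $\Delta^*$ passing through two adjacent vertices of $P$ (which would already force case (3)), deserve separate treatment. Once these are dispatched, the existence of a feasible descent direction, and hence the contradiction, should follow from an elementary linear algebra computation in $\mathbb{R}^6$.
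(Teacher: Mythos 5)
The paper does not actually prove this statement; it is quoted verbatim from \cite[Corollary~4.5]{kubjas2015fixed}, so there is no internal proof to compare yours against and I am judging your argument on its own terms. Your backward direction is fine: case (1) follows from \Cref{lem:nonnegative_rank_and_rank_equal_when_1_2}, and in cases (2) and (3) the mere existence of a nested triangle already gives $\rank_+(M)\leq 3$ by the characterization recalled in \Cref{sec:geometric_description}. The compactness step is also fine (modulo noting that $Q$ is bounded under the normalization in force when $\rank(M)=3$, so the set of nested triangles is indeed compact in $(\mathbb{R}^2)^3$).

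The forward direction, however, has a genuine gap, and it is not just an omitted computation: the strategy breaks precisely in the case that matters. Consider the configuration in which each vertex of $\Delta^*$ lies in the relative interior of an edge of $Q$ and each edge of $\Delta^*$ meets $P$ in exactly one vertex of $P$ --- six active codimension-one inequality constraints on six parameters, satisfying neither (2) nor (3). Your heuristic ``all active constraints are codimension one, so a descent direction should exist'' points the wrong way here: six independent active \emph{inequality} constraints in $\mathbb{R}^6$ do not pin $\Delta^*$ down, they make the feasible set locally a full-dimensional simplicial cone, and the first-order condition for $\Delta^*$ to be a constrained local minimizer of area is merely that the area gradient lie in the dual cone spanned by the six constraint gradients. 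That is an open, perfectly realizable KKT condition --- the \emph{generic} picture for a constrained minimizer, not an obstruction to being one --- so no first-order ``elementary linear algebra computation in $\mathbb{R}^6$'' can manufacture a feasible descent direction there, and nothing in your sketch rules out the area minimizer landing exactly in this configuration. A workable argument must abandon area minimization: for instance, first translate each edge of $\Delta$ inward until it supports $P$, then rotate the three edges about their respective contact vertices of $P$; the requirement that the three vertices of $\Delta$ stay inside $Q$ imposes three homogeneous linear inequalities on the three rotation angles, which always admit a nontrivial solution, so the configuration is never rigid and the motion can be continued (with a termination argument) until an edge becomes flush with an edge of $P$ (case (3)) or a vertex of $\Delta$ reaches a vertex of $Q$ (case (2)). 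Either supply an argument of that kind or cite the structure theory of minimal nested polygons; as written, the core claim of your forward direction is unsupported.
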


For the algorithm it is enough to consider one condition for each vertex of $Q$ and for each edge of $P$.
We use this algorithm extensively in~\Cref{sec:nonnegative_rank_three_completion} when we consider the existence of nonnegative rank-$3$ completions of partial nonnegative matrices.

\subsection{Rank-\texorpdfstring{$r$}{r} completion} \label{sec:rank_r_completion}

We start this subsection with two lemmas on properties of matrix rank that we will use several times throughout the paper. Then in~\Cref{def:rxr_minors_zero_consistency} we introduce $r \times r$ minors zero-consistency. Together with an additional condition on elimination ideals, $r \times r$ minors zero-consistency will form a necessary condition for rank-$r$ matrix completion, as shown in~\Cref{theorem:rank_r_completion}.

\begin{lemma} \label{lemma:removing-one-row-and-one-column}
Let $M \in \mathbb{R}^{p \times q}$ and $i \in [p], j \in [q]$. The following conditions are equivalent:
\begin{enumerate}
\item $\rank(M_{\hat{i},\hat{j}}) \leq r-1$ and $\rank(M) \leq r$;
\item $\rank(M_{\hat{i},\hat{j}}) \leq r-1$, and at least  $\rank(M_{\hat{i},:}) \leq r-1$ or $\rank(M_{:,\hat{j}}) \leq r-1$.
\end{enumerate}
\end{lemma}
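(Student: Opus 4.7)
Both conditions include the hypothesis $\rank(M_{\hat{i},\hat{j}}) \leq r-1$, so my plan is to fix this hypothesis and prove that, under it, $\rank(M) \leq r$ is equivalent to the disjunction $\rank(M_{\hat{i},:}) \leq r-1$ or $\rank(M_{:,\hat{j}}) \leq r-1$.

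For the direction $(2) \Rightarrow (1)$, I would use the elementary fact that removing a single row or column can decrease the rank by at most one. So if, say, $\rank(M_{\hat{i},:}) \leq r-1$, then adjoining the $i$-th row back to form $M$ raises the rank by at most one, giving $\rank(M) \leq r$; the case with $\rank(M_{:,\hat{j}}) \leq r-1$ is symmetric. This direction is almost immediate and should take one or two lines.

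For the direction $(1) \Rightarrow (2)$, I would argue by contrapositive: assume $\rank(M_{\hat{i},:}) \geq r$ and $\rank(M_{:,\hat{j}}) \geq r$, and derive $\rank(M) \geq r+1$ (contradicting $\rank(M) \leq r$). Since $M_{\hat{i},:}$ is a submatrix of $M$, the hypothesis $\rank(M) \leq r$ would force $\rank(M_{\hat{i},:}) = r$, and the $i$-th row of $M$ would have to lie in the row span of the other rows $M_{\hat{i},:}$. Restricting that linear relation to the columns indexed by $\hat{j}$ would show that the $i$-th row of $M_{:,\hat{j}}$ is in the row span of $M_{\hat{i},\hat{j}}$, and hence $\rank(M_{:,\hat{j}}) = \rank(M_{\hat{i},\hat{j}}) \leq r-1$, contradicting our assumption.

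The only step requiring any care is making sure the restriction of the linear dependence among rows of $M$ behaves correctly when columns are dropped, which amounts to observing that row operations commute with deleting columns. I don't expect any real obstacle; the proof is essentially a bookkeeping exercise on ranks, and the whole argument should fit in a short paragraph.
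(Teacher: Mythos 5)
Your proof is correct and takes essentially the same route as the paper's: the easy direction is the same "adding a row or column raises rank by at most one" observation, and the key step of the other direction --- restricting a linear dependence of the $i$-th row on the remaining rows to the columns indexed by $\hat{j}$ to conclude $\rank(M_{:,\hat{j}})=\rank(M_{\hat{i},\hat{j}})$ --- is exactly the paper's argument, merely packaged as a contrapositive rather than the paper's three-way case split (zero row / dependent row / independent row). No gaps.
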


\begin{proof}
We start by proving that the first condition implies the second one.  Assume $\rank(M_{\hat{i},\hat{j}}) \leq r-1$ and $\rank(M) \leq r$. Moreover, we assume that $p >r$ and $q >r$, because otherwise the implication is immediate because either $M_{\hat{i},:}$ has at most $r-1$ rows or $M_{:,\hat{j}}$ has at most $r-1$ columns.  If the row $M_{i,:}$ is zero, then $\rank(M_{:,\hat{j}}) = \rank(M_{\hat{i},\hat{j}}) \leq r-1$. Now we assume that the row $M_{i,:}$ is nonzero.
If the row $M_{i,:}$ can be written as a linear combination of the other rows of $M$, then $M_{i,\hat{j}}$ can be written as the same linear combination of rows of $M_{\hat{i},\hat{j}}$. Since $M_{\hat{i},\hat{j}}$ has rank at most $r-1$, then also rank of $M_{:,\hat{j}}$ is at most $r-1$. Finally, if the row $M_{i,:}$ cannot be written as a linear combination of the other rows of $M$, then $\rank(M) > \rank(M_{\hat{i},:})$, which means that $\rank(M_{\hat{i},:}) \leq r-1$. 

The second condition implies the first one, because by adding one row or column to a matrix, the rank can increase by at most one.
\end{proof}

\begin{lemma} \label{lemma:r_minors_zero_consistency}
Let $M \in \mathbb{R}^{p \times q}$ and $I \subset [p], K \subset [q]$. Then $\rank(M) \leq r$ and $\rank(M_{I,K}) \leq r-1$ imply that  $\rank(M_{I,:}) \leq r-1$ or $\rank(M_{:,K}) \leq r-1$.
\end{lemma}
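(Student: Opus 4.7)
My plan is to argue by contradiction, exploiting the fact that the coordinate projection onto the columns indexed by $K$ acts injectively on the row space of $M$ whenever that projection preserves the rank.

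First I would assume, toward a contradiction, that both $\rank(M_{I,:}) \geq r$ and $\rank(M_{:,K}) \geq r$. Since both are submatrices of $M$ and $\rank(M) \leq r$ by hypothesis, this forces $\rank(M) = \rank(M_{I,:}) = \rank(M_{:,K}) = r$.

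Next I would consider the coordinate projection $\pi_K \colon \mathbb{R}^q \to \mathbb{R}^K$ that discards entries outside $K$. Applied to a row of $M$, it returns the corresponding row of $M_{:,K}$; consequently $\pi_K$ maps the row space of $M$ onto the row space of $M_{:,K}$, and likewise maps the row space of $M_{I,:}$ onto the row space of $M_{I,K}$. Since the row spaces of $M$ and $M_{:,K}$ both have dimension $r$, the restriction of $\pi_K$ to the row space of $M$ is a surjective linear map between equidimensional vector spaces, hence a bijection, and therefore injective on the entire row space of $M$.

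Finally I would restrict $\pi_K$ further to the subspace formed by the row space of $M_{I,:}$; injectivity is inherited, so $\rank(M_{I,K}) = \dim \pi_K(\operatorname{rowspan}(M_{I,:})) = \dim \operatorname{rowspan}(M_{I,:}) = \rank(M_{I,:}) = r$, which contradicts $\rank(M_{I,K}) \leq r-1$.

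I do not foresee a serious obstacle: everything rests on identifying the row space of $M_{:,K}$ with the image of the row space of $M$ under $\pi_K$, combined with the elementary fact that a surjective linear map between equidimensional spaces is a bijection. A slightly more concrete alternative would be to select $r$ linearly independent rows of $M_{I,:}$, extract from the rank-deficiency of $M_{I,K}$ a nontrivial linear dependence among their $K$-restrictions, and thereby obtain a nonzero vector in the row space of $M$ supported on $[q] \setminus K$ to contradict $\rank(M_{:,K}) = r$; this is the same idea in coordinates, but I find the projection formulation cleaner.
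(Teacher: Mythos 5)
Your proof is correct, but it takes a genuinely different route from the paper's. The paper also argues by contradiction, but then reduces to the previously established special case of adding a single row and a single column (\Cref{lemma:removing-one-row-and-one-column}): using that $I$ and $K$ are proper subsets, it picks $i \in [p]\setminus I$ and $k \in [q]\setminus K$ and applies that lemma to the submatrix $M_{I\cup\{i\},K\cup\{k\}}$ to conclude this submatrix would have rank greater than $r$, contradicting $\rank(M)\leq r$. Your argument is instead self-contained linear algebra: from $\rank(M_{:,K})=\rank(M)=r$ you deduce that the coordinate projection $\pi_K$ is injective on $\operatorname{rowspan}(M)$, hence on the subspace $\operatorname{rowspan}(M_{I,:})$, forcing $\rank(M_{I,K})=\rank(M_{I,:})=r$ and contradicting $\rank(M_{I,K})\leq r-1$. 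Every step checks out: $\pi_K$ carries $\operatorname{rowspan}(M)$ onto $\operatorname{rowspan}(M_{:,K})$ and $\operatorname{rowspan}(M_{I,:})$ onto $\operatorname{rowspan}(M_{I,K})$, and a surjective linear map between $r$-dimensional spaces is a bijection. What your approach buys is independence from \Cref{lemma:removing-one-row-and-one-column} and from the properness of $I$ and $K$ (your argument works verbatim when $I=[p]$ or $K=[q]$, where the claim is anyway immediate); what the paper's approach buys is brevity by reusing an already-proven lemma, though your direct argument is arguably cleaner and easier to verify in one pass.
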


\begin{proof}
Assume $\rank(M) \leq r$ and $\rank(M_{I,K}) \leq r-1$. Let us assume by contradiction that both $M_{I,:}$ and $M_{:,K}$ have rank $r$. This means that there exists $i \in [p]\backslash I$ and $k \in [q] \backslash K$ such that $M_{I \cup \{i\},:}$ and $M_{:,K \cup \{k\}}$ have rank $r$. Then by~\Cref{lemma:removing-one-row-and-one-column}, the matrix $M_{I \cup \{i\},K \cup \{k\}}$ has rank greater than $r$, which is a contradiction to $M$ having rank $r$.
\end{proof}

\Cref{lemma:r_minors_zero_consistency} motivates the following definition.

\begin{definition} \label{def:rxr_minors_zero_consistency}
A partial matrix $ M_E \in \mathbb{C}^E$ is said to be $r \times r$ minors zero-consistent if the following condition holds: If an $r \times r $ submatrix $M_{I,K}$ has rank at most $r-1$, then either the submatrix $M_{I,:}$ or the submatrix $M_{:,K}$ has rank at most $r-1$. 
\end{definition}

Consider the polynomial ring $\mathbb{C}[x_{ij}: 1 \leq i \leq p, 1 \leq j \leq q]$. Let $I_r$ be the ideal of $(r+1) \times (r+1)$ minors of the matrix $(x_{ij})_{1 \leq i \leq p, 1 \leq j \leq q}$. 
For $E \subseteq [p] \times [q]$, let $I_E$ be the elimination ideal $I_r \cap \mathbb{C}[x_{ij}: (i,j) \in E]$.

\begin{proposition} \label{theorem:rank_r_completion}
If a partial matrix $M_E \in \mathbb{C}^E$ has a rank at most $r$ completion $M \in \mathbb{C}^{m \times n}$, then
\begin{enumerate}
\item $M_E \in \mathbb{V}(I_E)$ and
\item the partial matrix $M_E$ is $r \times r$ minors zero-consistent.
\end{enumerate}
\end{proposition}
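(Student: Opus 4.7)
The plan is to prove the two conclusions separately. Part (1) is essentially a direct reformulation of the definition of the elimination ideal $I_E$, while Part (2) is a translation of \Cref{lemma:r_minors_zero_consistency}, applied to the completion $M$, into the language of partial matrices.

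For Part (1), I would take an arbitrary $f \in I_E$. By definition of the elimination ideal, $f$ lies in $I_r$ and involves only the variables $x_{ij}$ with $(i,j) \in E$. Because $\rank(M) \leq r$, all $(r+1) \times (r+1)$ minors of $M$ vanish, so $M \in \mathbb{V}(I_r)$ and hence $f(M) = 0$. Since $f$ depends only on the entries indexed by $E$, we have $f(M) = f(M_E)$, giving $f(M_E) = 0$. As $f \in I_E$ was arbitrary, $M_E \in \mathbb{V}(I_E)$.

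For Part (2), let $M_{I,K}$ be an $r \times r$ submatrix of $M_E$ (so $I \times K \subseteq E$, which makes its rank well-defined from the observed data) with $\rank(M_{I,K}) \leq r-1$. Viewed as a submatrix of the completion $M$, the entries of $M_{I,K}$ are unchanged, so the same rank bound holds there. Together with $\rank(M) \leq r$, the hypotheses of \Cref{lemma:r_minors_zero_consistency} apply to $M$, and the lemma yields $\rank(M_{I,:}) \leq r-1$ or $\rank(M_{:,K}) \leq r-1$ in the completion $M$. Since the entries of $M$ restricted to $M_{I,:}$ (respectively $M_{:,K}$) provide a rank-$(r-1)$ completion of the corresponding partial submatrix of $M_E$, this is exactly the condition required by \Cref{def:rxr_minors_zero_consistency}.

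The only subtle point is the interpretation of $r \times r$ minors zero-consistency for partial matrices: the rank of a potentially unobserved submatrix such as $M_{I,:}$ is to be understood relative to a completion, and the argument uses the given completion $M$ as the witness. There is no substantive obstacle; the proposition is really just a packaging of \Cref{lemma:r_minors_zero_consistency} and the definition of the elimination ideal, applied once we have a rank-$r$ completion in hand.
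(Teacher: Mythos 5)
Your proposal is correct and follows essentially the same route as the paper: part (1) is the easy direction of the elimination-ideal containment (which you prove directly where the paper cites Cox--Little--O'Shea), and part (2) is obtained by applying \Cref{lemma:r_minors_zero_consistency} to the given completion $M$. Your remark on interpreting the zero-consistency condition via the completion is a sensible clarification but does not change the argument.
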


\begin{proof}[Proof of~\Cref{theorem:rank_r_completion}]
Assume the partial matrix $M_E$ has a rank at most $r$ completion $M$. Then $M \in \mathbb{V}(I_r)$ and thus $M_E \in \mathbb{V}(I_E)$ by~\cite[Chapter 3, \S2, Lemma 1]{cox1997ideals}. The second condition follows from~\Cref{lemma:r_minors_zero_consistency}.
\end{proof}

In the case of rank-1 matrix completion, \Cref{theorem:rank_r_completion} can be made more explicit (\cite{cohen1989ranks} and \cite[Theorem 5]{hadwin2006rank}). We will recall it in~\Cref{lem:zero_row_and_cycle_property_iff_completion}. If $r \geq 2$, a nice description of the generators of the elimination ideal $I_E$ is not known for a general $E$, although in some special cases, it is generated by the $(r+1) \times (r+1)$ minors. An analogue of~\Cref{theorem:rank_r_completion} for rank-1 tensor completion appears in~\cite[Proposition 2.2]{kahle2017geometry}.

The independence of observed entries is often studied in the language of algebraic matroids, see~\cite{rosen2020algebraic} for an introduction. If $E$ is an independent set in the algebraic matroid of the determinantal ideal $I_r$, then $I_E$ is the zero ideal and the condition 1 in~\Cref{theorem:rank_r_completion} is automatically satisfied. In the case of rank-1 completion, the set $E$ is independent if and only if the corresponding bipartite graph does not have cycles (see~\Cref{sec:nonnegative_rank_one_completion} for more details). In the case of rank-2 completion, the independence is characterized in~\cite[Theorem 4.4]{BERNSTEIN20171}.

\begin{question}
Does the other direction of~\Cref{theorem:rank_r_completion} hold, i.e., if conditions 1 and 2 hold, does $M_E$ have a rank-$r$ completion with complex entries? 
\end{question}

\section{Nonnegative rank-1 and nonnegative rank-2 completion} \label{sec:nonnegative_rank_one_and_two_completion}

In this section, we study nonnegative rank-$1$ (\Cref{sec:nonnegative_rank_one_completion}) and nonnegative rank-$2$ (\Cref{sec:nonnegative_rank_two_completion}) matrix completion.

\subsection{Nonnegative rank-1 completion} \label{sec:nonnegative_rank_one_completion}

The main result in this subsection is that a partial matrix with nonnegative entries has a nonnegative rank-1 completion if and only if it has a rank-1 completion (\Cref{lem:rank_1_nonnegative_completion_iff_completion}). Then we recall some results about rank-1 completion in the language of graph theory (\Cref{lem:zero_row_and_cycle_property_iff_completion,lem:rank_1_connected_graph_iff_unique}) and finally state analogous results for nonnegative rank-1 completion (\Cref{cor:nonnegative_rank-1_completion_existence_and_uniqueness}).

\begin{proposition}\label{lem:rank_1_nonnegative_completion_iff_completion}
    A partial matrix with nonnegative entries has a nonnegative rank-$1$ completion if and only if it has a rank-$1$ completion.
\end{proposition}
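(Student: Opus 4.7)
The forward direction is immediate: any nonnegative rank-$1$ completion has rank at most $1$, hence is a rank-$1$ completion. The work is in the converse.

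For the converse, my plan is to start with an arbitrary rank-$1$ completion $M$ of $M_E$ and modify it into a nonnegative rank-$1$ completion by taking absolute values in a factorization. Concretely, write $M = u v^\top$ for some vectors $u \in \mathbb{R}^p$, $v \in \mathbb{R}^q$ (this is possible since $\rank(M) \leq 1$; if $M = 0$ we are trivially done). Then set $u' = (|u_1|, \dots, |u_p|)^\top$ and $v' = (|v_1|, \dots, |v_q|)^\top$, and define $M' = u' (v')^\top$. By construction $u', v' \geq 0$, so $M'$ is a nonnegative matrix admitting an explicit size-$1$ nonnegative factorization, hence $\rank_+(M') \leq 1$.

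It remains to check that $M'$ agrees with $M_E$ on the observed entries. For $(i,j) \in E$ we compute
\[
M'_{ij} = |u_i| \cdot |v_j| = |u_i v_j| = |M_{ij}| = M_{ij},
\]
where the last equality uses the hypothesis that $M_{ij} \geq 0$. Thus $M'$ is a nonnegative rank-$1$ completion of $M_E$, as desired.

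I do not anticipate a genuine obstacle: the argument is essentially the trivial observation that a rank-$1$ real matrix factors as an outer product, and the absolute-value operation commutes with rank-$1$ outer products in the sense that $|u||v|^\top = |uv^\top|$ entrywise. The only potential subtlety is handling the degenerate case where $u$ or $v$ is zero, but then $M = 0$ and the zero matrix is already a valid nonnegative rank-$1$ (in fact rank-$0$) completion, so no special treatment is needed. One could alternatively invoke \Cref{lem:nonnegative_rank_and_rank_equal_when_1_2} after producing the nonnegative rank-$1$ matrix $M'$ via its explicit factorization, but this is not necessary since the factorization itself certifies nonnegative rank $\leq 1$.
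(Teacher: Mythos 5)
Your proposal is correct and follows exactly the same route as the paper's proof: factor the rank-$1$ completion as an outer product $uv^\top$, take entrywise absolute values, and observe that the observed (nonnegative) entries are unchanged since $|u_i||v_j| = |u_i v_j| = |M_{ij}| = M_{ij}$. You spell out the verification on $E$ and the degenerate zero case slightly more explicitly than the paper does, but the argument is identical in substance.
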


\begin{proof}
    The forward direction is immediate.
    Conversely, suppose that $M_E \in \mathbb{R}^E_{\geq 0}$ has a rank-$1$ completion $M \in \mathbb{R}^{p \times q}$ that is not necessarily nonnegative.
    Then $M = ab^T$, where $a \in \mathbb{R}^p$ and $b \in \mathbb{R}^q$.
    Taking the entry-wise absolute values $|a|$ and $|b|$ gives a  nonnegative rank-$1$ completion $\tilde M = |a||b|^T$ of $M_E$.
\end{proof}

The rank-$1$ completability of partial matrices is well-studied, see e.g. \cite{cohen1989ranks, hadwin2006rank,kiraly2013error} for completability of general partial matrices, and \cite{kubjas2017matrix} for completions of partial nonnegative matrices to the independence model.
Here we present the main results needed in the nonnegative rank-$1$ case.

A partial matrix $M_E$ has an associated bipartite graph $G(E) = (V_1, V_2, E)$, where $V_1$ are the vertices corresponding to the rows of the matrix $M_E$, $V_2$ are the vertices corresponding to the columns of the matrix $M_E$, and $(i,j) \in E$ if the entry $m_{ij}$ of $M_E$ is observed.  
The partial matrix $M_E$ has the \textit{zero row (column) property}, if given an observed entry that is $0$, all other observed entries in the same row (column) are also zero \cite[Section 2.1]{hadwin2006rank}.
Lastly, let $C = \{ (r_1,c_1), (r_1,c_2), (r_2,c_2), \dots, (r_k, c_k), (r_k, c_1) \}$ be a cycle in the bipartite graph $G$ associated to $M_E$.
Then $M_E$ is called singular with respect to the cycle $C$ if 
\begin{equation}\label{eq:cycle_property}
    m_{r_1c_1}m_{r_2c_2}\cdots m_{r_kc_k} = m_{r_1c_2}m_{r_2c_3}\cdots m_{r_kc_1}.
\end{equation}
In \cite{hadwin2006rank} the equation \eqref{eq:cycle_property} is called the \textit{cycle property}, and $M_E$ is said to have the cycle property, if \eqref{eq:cycle_property} holds for all cycles in the bipartite graph $G$ associated to $M_E$.

The next two lemmas are paraphrased from \cite{hadwin2006rank}. 

\begin{lemma}[{\cite[Lemma 6.2]{cohen1989ranks}, \cite[Theorem 5]{hadwin2006rank}}]\label{lem:zero_row_and_cycle_property_iff_completion}
    A nonzero partial matrix has a rank-$1$ completion if and only if it has the zero row or column property and the cycle property.
\end{lemma}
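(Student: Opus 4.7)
The plan is to handle the two directions separately. For the forward direction, suppose $M = ab^T$ is a rank-one completion of $M_E$ with $a \in \mathbb{R}^p$ and $b \in \mathbb{R}^q$. Any observed entry $m_{ij} = a_i b_j$ that equals zero forces $a_i = 0$ or $b_j = 0$, so every other entry of row $i$ or of column $j$ of $M$ (and in particular every other observed entry there) vanishes, establishing the zero row or column property at $(i,j)$. For a cycle $(r_1,c_1),(r_1,c_2),(r_2,c_2),\ldots,(r_k,c_k),(r_k,c_1)$ in $G(E)$, both products in \eqref{eq:cycle_property} factor as $\prod_{s=1}^{k} a_{r_s}\cdot\prod_{s=1}^{k} b_{c_s}$, so \eqref{eq:cycle_property} holds.

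For the converse, I would first strip away the ``forced zero'' rows and columns. Let $R_0$ be the set of rows that contain at least one observed entry and whose observed entries are all zero, and let $C_0$ be defined analogously for columns. The zero row or column hypothesis guarantees that every observed zero entry lies in a row of $R_0$ or a column of $C_0$, so setting $a_i=0$ for $i \in R_0$ and $b_j=0$ for $j \in C_0$ is consistent with every observation in these rows and columns regardless of how the remaining components of $a$ and $b$ are chosen. The problem therefore reduces to completing the restricted partial matrix $M'_E$ on the row index set $[p]\setminus R_0$ and the column index set $[q]\setminus C_0$; by construction every observed entry of $M'_E$ is nonzero.

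On each connected component of the bipartite graph $G(E')$ associated to $M'_E$, I would build $a$ and $b$ by spanning-tree propagation. Fix a root, say a row $r$ of the component, set $a_r = 1$, and fix a spanning tree of the component. Traversing the tree outward from the root, each newly visited column vertex $c$ reached through a tree edge $(r',c)$ is assigned $b_c = m_{r'c}/a_{r'}$, and each newly visited row vertex $r''$ reached through a tree edge $(r'',c'')$ is assigned $a_{r''} = m_{r''c''}/b_{c''}$. Since every observed entry of $M'_E$ is nonzero, all denominators are nonzero, and $a_{r'}b_{c'} = m_{r'c'}$ holds on every tree edge by construction. Finally, for rows or columns that are isolated in $G(E')$ (equivalently, rows or columns in $[p]\setminus R_0$ or $[q]\setminus C_0$ with no observed entry in $M'_E$), assign any nonzero scalar.

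The main obstacle, and the unique place where the cycle property is essential, is verifying that the identity $a_{r'}b_{c'} = m_{r'c'}$ also holds on every observed non-tree edge $(r',c')$. Any such edge closes a unique cycle together with the tree path joining $r'$ and $c'$, and rearranging the tree identities $a_{r_s}b_{c_s}=m_{r_s c_s}$ and $a_{r_s}b_{c_{s+1}}=m_{r_s c_{s+1}}$ around this cycle expresses the required equality as precisely \eqref{eq:cycle_property} for that cycle, which holds by assumption. Combining the spanning-tree constructions over all components with the zero assignments on $R_0 \cup C_0$ produces vectors $a \in \mathbb{R}^p$ and $b \in \mathbb{R}^q$ such that $ab^T$ is a rank-one completion of $M_E$.
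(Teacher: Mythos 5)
The paper does not prove this lemma itself; it is quoted (``paraphrased'') from \cite{cohen1989ranks} and \cite{hadwin2006rank}, so there is no in-paper argument to compare against. Your proof is correct and is essentially the standard argument from those references: the forward direction by factoring $m_{ij}=a_ib_j$, and the converse by first zeroing out the rows/columns forced by the zero row or column property, then propagating values of $a$ and $b$ along a spanning tree of each component of the bipartite graph and using the cycle property exactly once per non-tree edge to verify consistency.
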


\Cref{lem:zero_row_and_cycle_property_iff_completion} is the special case of~\Cref{theorem:rank_r_completion} in the rank-1 case: polynomials~\eqref{eq:cycle_property} in the cycle property generate the ideal $I_E$~\cite[Remark 2.5]{kahle2017geometry} and $1 \times 1$ minors zero-consistency is the same as the combination of the zero row or zero column property. 

\begin{lemma}[{\cite[Corollary 6]{hadwin2006rank}}]\label{lem:rank_1_connected_graph_iff_unique}
    Let $M_E$ be a nonzero partial matrix with observed entries $E \subseteq [p] \times [q]$ such that $M_E$ has a rank-$1$ completion $M$.
    Denote by $G(E)^{\#}=(V_1, V_2, E^{\#})$ the subgraph of $G(E)$ with $(i,j) \in E^{\#}$ if $(i,j) \in E$ and $m_{ij} \neq 0$.
    Then $M$ is the unique completion of $M_E$ if and only if $G(E)^{\#}$ is connected.
\end{lemma}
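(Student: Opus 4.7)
The plan is to parametrize every rank-$1$ completion as $M = ab^T$ with $a \in \mathbb{R}^p$, $b \in \mathbb{R}^q$, unique up to the scaling symmetry $(a,b) \mapsto (\lambda a, b/\lambda)$ for $\lambda \neq 0$. Uniqueness of the completion then reduces to uniqueness of $(a,b)$ modulo this scaling.

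For the forward direction, I would take two completions $M = ab^T$ and $M' = a'b'^T$ and compare them along $E^{\#}$. For each $(i,j) \in E^{\#}$ the equality $a_i b_j = a'_i b'_j = m_{ij} \neq 0$ makes all four of $a_i, a'_i, b_j, b'_j$ nonzero and yields the identity $a_i/a'_i = b'_j/b_j$. Walking along any path of $G(E)^{\#}$ propagates a common ratio $\lambda$ through row vertices and $1/\lambda$ through column vertices, so connectedness pins down a single scalar $\lambda$ with $a'_i = \lambda a_i$ on every row vertex and $b'_j = b_j/\lambda$ on every column vertex of $G(E)^{\#}$. Since connectedness on the full vertex set $V_1 \cup V_2$ also forbids isolated vertices, every coordinate of $a$ and $b$ is determined up to the scaling symmetry, so $M = M'$.

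For the converse, suppose $G(E)^{\#}$ has two distinct components $C_1, C_2$. Given $M = ab^T$, I would rescale by multiplying $a_i$ by some $\lambda \neq 1$ for $i \in V_1 \cap C_1$ and dividing $b_j$ by $\lambda$ for $j \in V_2 \cap C_1$, leaving the other coordinates unchanged. Products $a_i b_j$ with $(i,j) \in E^{\#}$ are preserved (both endpoints of any $E^{\#}$-edge lie in a single component, so $\lambda$ and $\lambda^{-1}$ appear together), and observed zeros stay zero because the rescaling fixes the zero sets of $a$ and $b$; hence the new $M' = a'b'^T$ is also a completion of $M_E$. It differs from $M$ on any cross-entry $(i,j)$ with $i \in V_1 \cap C_1$, $j \in V_2 \cap C_2$ and $a_i b_j \neq 0$, and such an entry is necessarily unobserved---otherwise $(i,j) \in E$ with $a_i b_j \neq 0$ would put $(i,j)$ in $E^{\#}$ and connect $C_1$ and $C_2$.

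The main obstacle is the degenerate sub-case where the disconnection of $G(E)^{\#}$ is caused only by isolated vertices whose corresponding coordinate of $a$ or $b$ is forced to zero by observed zero entries; the multiplicative rescaling becomes trivial there. Handling this requires instead perturbing the offending coordinate additively and invoking the zero row/column structure supplied by \Cref{lem:zero_row_and_cycle_property_iff_completion} to ensure the perturbed matrix still matches the observations. I expect this corner case to be the only delicate bookkeeping in the proof.
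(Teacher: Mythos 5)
The paper does not actually prove this lemma --- it is quoted from \cite{hadwin2006rank} --- so I can only assess your argument on its own terms. Your forward direction (connected implies unique) is correct: connectedness of $G(E)^{\#}$ on all of $V_1\cup V_2$ forces every coordinate of $a$ and $b$ to be nonzero, and the ratio propagation along edges pins down the single scaling $\lambda$. Your converse is also correct in the main case, where some vertex outside the rescaled component carries a nonzero coordinate of $a$ or $b$.

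The corner case you defer is not bookkeeping; it is exactly where the argument, and indeed the statement as paraphrased, breaks down. Take
\[
M_E=\begin{pmatrix} 0 & ? \\ 1 & 1\end{pmatrix}.
\]
Every rank-$1$ completion $ab^T$ has $b_1\neq 0$ (since $a_2b_1=1$), hence $a_1=0$ and $m_{12}=a_1b_2=0$: the completion is unique. Yet $G(E)^{\#}$ has only the edges $(2,1)$ and $(2,2)$, so the row vertex $1$ is isolated and the graph is disconnected. Your proposed repair --- perturb the offending coordinate additively --- is precisely what the observed zero forbids: changing $a_1$ to $\varepsilon\neq 0$ makes $a_1b_1=\varepsilon b_1\neq 0=m_{11}$, so the perturbed matrix is no longer a completion. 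Hence the ``only if'' direction cannot be closed for the statement as written; one must either exclude observed zeros or add a hypothesis treating rows and columns all of whose observed entries vanish (whose coordinates may be forced to zero, and hence determined, by the other observations) before ``unique $\Leftrightarrow$ connected'' becomes true. You should flag this as a defect of the statement rather than promise to resolve it by perturbation.
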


If the completion is unique, there exists an algorithm for the unique completion of the missing entries using the cycle property~\cite[Lemma 6.2]{cohen1989ranks}, \cite[Formula 6]{hadwin2006rank}, \cite[Theorem 34]{kiraly2015algebraic} and \cite[Theorem 2.6]{kiraly2013error}. If a partial matrix $M_E$ satisfies the zero row or column property and the cycle property, but the graph $G(E)^{\#}$ is not connected, there are infinitely many rank-$1$ completions for $M_E$ \cite[Subsection 2.3]{hadwin2006rank}.

Combining the previous results, we get the following corollary about the nonnegative rank-$1$ completions of partial nonnegative matrices, and the number of possible completions.

\begin{corollary}\label{cor:nonnegative_rank-1_completion_existence_and_uniqueness}
    Let $M_E \in \mathbb{R}^E$ be a nonnegative and nonzero partial matrix.
    Then $M_E$ has a nonnegative rank-$1$ completion if and only if $M_E$ has the zero row or column property and the cycle property.
    Furthermore, if $M_E$ satisfies the zero row or column property and the cycle property, then a nonnegative rank-$1$ completion of $M_E$ is the unique nonnegative rank-$1$ completion if and only if the graph $G(E)^{\#}$ is connected.
\end{corollary}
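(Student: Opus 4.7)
The plan is to assemble the corollary from Proposition~\ref{lem:rank_1_nonnegative_completion_iff_completion} together with Lemmas~\ref{lem:zero_row_and_cycle_property_iff_completion} and~\ref{lem:rank_1_connected_graph_iff_unique}.

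For the existence statement I would chain equivalences. By Proposition~\ref{lem:rank_1_nonnegative_completion_iff_completion}, $M_E$ has a nonnegative rank-$1$ completion if and only if it has a rank-$1$ completion. Since $M_E$ is nonzero, Lemma~\ref{lem:zero_row_and_cycle_property_iff_completion} further identifies this with $M_E$ satisfying the zero row or column property and the cycle property.

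For the uniqueness statement, the direction ``$G(E)^{\#}$ connected implies the nonnegative rank-$1$ completion is unique'' is immediate: Lemma~\ref{lem:rank_1_connected_graph_iff_unique} supplies a unique rank-$1$ completion, and since every nonnegative rank-$1$ completion is in particular a rank-$1$ completion, at most one nonnegative completion can exist. Existence is given by the first part of the corollary, so uniqueness follows.

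The main obstacle is the converse: assuming $G(E)^{\#}$ is disconnected, I must exhibit two distinct nonnegative rank-$1$ completions. Lemma~\ref{lem:rank_1_connected_graph_iff_unique} alone yields infinitely many rank-$1$ completions, but a priori none of these need be nonnegative, so one cannot simply cite it. My strategy is a scaling argument. Take a nonnegative completion $M = ab^T$ with $a, b \geq 0$ provided by the first part, and partition the vertices of $G(E)^{\#}$ into its connected components, written as pairs $(R_l, C_l)$ so that $V_1 = \bigsqcup_l R_l$ and $V_2 = \bigsqcup_l C_l$. For any positive scalars $\lambda_l > 0$ define $a'_i = \lambda_l a_i$ for $i \in R_l$ and $b'_j = \lambda_l^{-1} b_j$ for $j \in C_l$. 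Within-component entries satisfy $a'_i b'_j = a_i b_j$, and every observed cross-component entry must equal zero (otherwise the edge $(i,j)$ would connect two distinct components of $G(E)^{\#}$), forcing $a_i = 0$ or $b_j = 0$ at such positions; this vanishing is invariant under positive rescaling, so the rescaled matrix is still a nonnegative completion of $M_E$. Taking $\lambda_l \neq \lambda_m$ on two components each carrying a nonzero edge---which guarantees $a_i > 0$ for some $i \in R_l$ and $b_j > 0$ for some $j \in C_m$---changes the (unobserved) entry $a_i b_j$ and produces a genuinely different completion. The remaining corner case is when disconnectedness of $G(E)^{\#}$ is witnessed only by an isolated vertex, corresponding to a row or column of $M_E$ with no nonzero observed entry; here the associated coordinate of $a$ or $b$ is a free nonnegative parameter, which again yields infinitely many nonnegative rank-$1$ completions.
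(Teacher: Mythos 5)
Your treatment of the existence claim and of the direction ``$G(E)^{\#}$ connected $\Rightarrow$ uniqueness'' coincides with the paper's proof. For the converse, however, you take a genuinely different route: the paper simply cites the fact that a disconnected $G(E)^{\#}$ yields infinitely many rank-$1$ completions and then observes that only finitely many rank-$1$ completions can produce a given nonnegative one by taking absolute values, so infinitely many nonnegative completions must exist. Your component-wise rescaling is an explicit construction, and it is correct in the main case: if two components of $G(E)^{\#}$ each contain an edge, then the cross-component entry $a_ib_j$ with $a_i,b_j>0$ is necessarily unobserved, and varying $\lambda_l/\lambda_m$ produces infinitely many distinct nonnegative completions. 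This is more informative than the paper's counting argument, at the price of having to handle degenerate components by hand.

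That is exactly where the gap is. In your ``corner case'' you assert that an isolated vertex of $G(E)^{\#}$ contributes a free nonnegative coordinate of $a$ or $b$. This is false when the isolated row (say row $i$) contains an \emph{observed zero} entry $m_{ij}=0$ in a column $j$ that is forced to be nonzero: then every completion has $b_j\neq 0$, hence $a_i=0$ is forced and the coordinate is not free at all. Concretely, for $M_E=\left(\begin{smallmatrix} 0 & ? \\ 1 & 1 \end{smallmatrix}\right)$ the vertex $r_1$ is isolated in $G(E)^{\#}$, yet $b_1\neq 0$ forces $a_1=0$ and the rank-$1$ completion is unique (the missing entry must be $0$). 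So your argument does not establish non-uniqueness in this case --- and indeed this example shows that the statement itself, as well as the paraphrase of \Cref{lem:rank_1_connected_graph_iff_unique} on which both proofs rely, needs an additional convention about rows and columns whose observed entries are all zero (e.g., deleting them before forming $G(E)^{\#}$). Within the terms of the paper, the fix is to do what the paper does: invoke the cited infinitude of rank-$1$ completions rather than construct them; but your example-level analysis makes visible that the corner case cannot be waved away as a free parameter.
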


\begin{proof}
    The first claim follows from \Cref{lem:rank_1_nonnegative_completion_iff_completion} and \Cref{lem:zero_row_and_cycle_property_iff_completion}.
    To prove the second claim, suppose that the partial nonnegative matrix $M_E$ has the zero row or column property and the cycle property and consider the graph $G(E)^{\#}$.
    If the graph $G(E)^{\#}$ is connected, then by \Cref{lem:rank_1_connected_graph_iff_unique} $M_E$ has a unique rank-$1$ completion which implies that the nonnegative rank-$1$ completion is also unique.
    On the other hand, if the graph $G(E)^{\#}$ is not connected, then $M_E$ has infinitely many rank-$1$ completions.
    Since there are finitely many rank-$1$ completions that give a given nonnegative rank-$1$ completion by taking absolute values, then it follows that $M_E$ has infinitely many nonnegative rank-$1$ completions.
    This proves the second claim.
\end{proof}

\subsection{Nonnegative rank-2 completion} \label{sec:nonnegative_rank_two_completion}

We start this subsection with~\Cref{lem:nonnegative-rank-2-completion}, which is a consequence of \Cref{lem:nonnegative_rank_and_rank_equal_when_1_2} and which will be our main tool in this subsection.
In the rest of this subsection we consider nonnegative rank at most two completions of partial $3 \times 3$ matrices.
Our main result is~\Cref{thm:3x3_nonnegative_rank_2} which characterizes patterns of  partial nonnegative $3 \times 3$ matrices that are completable to nonnegative rank at most two matrices independently of the observed entries. The rest of the subsection consists of~\Cref{ex:rank-2_counter_example_diagonal_missing_entries} and~\Cref{cor:all_but_one_entry_missing_in_a_row} needed for proving the theorem.

\begin{proposition}\label{lem:nonnegative-rank-2-completion}
    A partial matrix with nonnegative entries has a nonnegative rank-$2$ completion if and only if it has a rank-$2$ completion that is nonnegative. 
\end{proposition}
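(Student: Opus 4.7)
The plan is to observe that this proposition is essentially a direct corollary of \Cref{lem:nonnegative_rank_and_rank_equal_when_1_2}, which says that for nonnegative matrices of rank at most two, the nonnegative rank coincides with the rank. Both directions should fall out immediately once this is identified.

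For the forward direction, suppose $M_E$ has a nonnegative rank-2 completion $M$. Since nonnegative rank is always an upper bound for rank, $M$ has rank at most $2$, and by construction $M$ is nonnegative. Hence $M$ is a rank-2 completion of $M_E$ that is nonnegative. This direction requires no appeal to \Cref{lem:nonnegative_rank_and_rank_equal_when_1_2}.

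For the converse, suppose $M_E$ has a rank-2 completion $M$ that is nonnegative. Then $M$ is a nonnegative matrix of rank at most two, so \Cref{lem:nonnegative_rank_and_rank_equal_when_1_2} yields $\rank_+(M) = \rank(M) \leq 2$. Thus $M$ is a nonnegative rank-2 completion of $M_E$, as desired.

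There is no real obstacle here; the content of the proposition is entirely packaged inside the Cohen--Rothblum result recalled as \Cref{lem:nonnegative_rank_and_rank_equal_when_1_2}. The only subtlety worth flagging is the convention that ``rank-$2$ completion'' and ``nonnegative rank-$2$ completion'' mean rank (resp.\ nonnegative rank) at most $2$, so that the equality of rank and nonnegative rank in the rank $\leq 2$ regime translates cleanly between the two notions of completion.
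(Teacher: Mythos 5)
Your proposal is correct and matches the paper's approach exactly: the paper presents this proposition without a separate proof, noting only that it is a consequence of \Cref{lem:nonnegative_rank_and_rank_equal_when_1_2}, which is precisely the reduction you carry out. Your explicit treatment of both directions and the remark on the ``at most $2$'' convention is a faithful expansion of the same argument.
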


\begin{theorem}\label{thm:3x3_nonnegative_rank_2}
Let $E \subseteq [3] \times [3]$. Then every partial nonnegative matrix with observed entries in $E$ can be completed to a nonnegative rank at most two matrix in $\mathbb{R}^{3 \times 3}$ if and only if it can be completed to a rank at most two matrix in $\mathbb{R}^{3 \times 3}$, unless all missing entries are in different rows and columns.
\end{theorem}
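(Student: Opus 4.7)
The forward direction follows immediately, since any nonnegative rank-$\leq 2$ completion is a rank-$\leq 2$ completion and, by Theorem~\ref{lem:nonnegative_rank_and_rank_equal_when_1_2}, nonnegative rank equals rank for nonnegative matrices of rank at most two. For the ``unless'' clause Example~\ref{ex:rank-2_counter_example_diagonal_missing_entries} exhibits a $3\times 3$ partial nonnegative matrix whose missing entries lie in distinct rows and columns that has a rank-$\leq 2$ completion but no nonnegative rank-$\leq 2$ completion. What remains is the reverse implication for every pattern $E$ in which some row or some column contains at least two missing entries, and by Proposition~\ref{lem:nonnegative-rank-2-completion} it suffices to build a rank-$\leq 2$ completion that is nonnegative.

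Transposing if necessary, I may assume some row $i\in[3]$ contains at least two missing entries, so row $i$ has at most one observed entry. Set $\{j_1,j_2\}=[3]\setminus\{i\}$ and denote by $r_{j_1},r_{j_2}$ the rows of the constructed completion. The strategy is to arrange matters so that row $i$ is either zero or a nonnegative scalar multiple of $r_{j_1}$ or $r_{j_2}$, which automatically yields rank at most $2$. If row $i$ has no observed entry, or has a single observed entry equal to $0$, I set every missing entry in the matrix to $0$; the resulting completion is nonnegative with row $i=0$, hence of rank at most two. Otherwise, let $a>0$ be the observed value of row $i$ at column $j_0$. If at least one of $M_{j_1,j_0},M_{j_2,j_0}$ is either observed to be positive or is missing, then after relabeling I may assume $M_{j_1,j_0}$ has this property; I assign to $M_{j_1,j_0}$ the positive value $c$ (its observed value, or $c=a$ if it was missing), fill every other missing entry in rows $j_1$ and $j_2$ with $0$, and define row $i$ to equal $(a/c)\,r_{j_1}$. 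The resulting completion is nonnegative, matches $M_E$ at every observed position, and row $i$ is a scalar multiple of row $j_1$, so the rank is at most $2$.

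The main obstacle is the remaining case $M_{j_1,j_0}=M_{j_2,j_0}=0$, both observed, with $a>0$, since then row $i$ cannot be realized as a scalar multiple of $r_{j_1}$ or $r_{j_2}$. Expanding the determinant of any completion along column $j_0$ yields $\det M=a\det N$, where $N$ is the $2\times 2$ submatrix of rows $j_1,j_2$ and columns $\hat{j_0}$. The rank-$\leq 2$ hypothesis forces $\det M=0$ in some completion, hence $\det N=0$ there. I claim one can always fill the missing entries of $N$ with nonnegative values so that $\det N=0$; this is a short case check on the number of observed entries of $N$. If all four are observed, $\det N=0$ is already forced by the hypothesis and there is nothing to do. If entries are missing, one either assigns a single missing entry by a nonnegative ratio such as $y_1=y_2 z_1/z_2$ when the observed denominator is positive, or observes that the rank-$\leq 2$ hypothesis forces the corresponding product to vanish so that the missing entry is free and may be set to $0$; when several entries of $N$ are missing there is enough freedom to set some to $0$ and solve the equation. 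Filling the two remaining missing entries of row $i$ with $0$ then produces a nonnegative completion with $\det M=a\det N=0$, hence of rank at most $2$, which concludes the proof.
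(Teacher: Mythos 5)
Your proof is correct and follows essentially the same route as the paper: after disposing of the forward direction and the diagonal patterns via Example~\ref{ex:rank-2_counter_example_diagonal_missing_entries}, you reduce to a row with at most one observed entry, complete by making that row a nonnegative multiple of another row whenever its observed entry's column permits it, and in the degenerate case (positive observed entry with zeros elsewhere in its column) use the forced vanishing of the complementary $2\times 2$ minor to produce a nonnegative rank-one sub-completion. Your explicit case check on the $2\times 2$ submatrix $N$ is exactly what the paper outsources to Proposition~\ref{lem:rank_1_nonnegative_completion_iff_completion}, and your scaling construction is the content of Lemma~\ref{cor:all_but_one_entry_missing_in_a_row}, with your direct treatment of a possibly unobserved entry in column $j_0$ being, if anything, slightly more careful.
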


\begin{example} \label{ex:rank-2_counter_example_diagonal_missing_entries}
 Consider the partial matrix
    \[
    \begin{pmatrix}
    ? & 0 & 1\\
    1 & ? & 0\\
    0 & 1 & ?
    \end{pmatrix}.
    \]
    Its determinant is $1+m_{11} m_{22}m_{33}$. Hence, every rank at most two completion needs to have at least one negative entry. By setting one or two missing entries to one, we also get examples with one and two diagonal missing entries that have rank-$2$ completions but no rank-$2$ completions with nonnegative entries. In the case of three missing diagonal entries, one can show by analyzing the determinant that there needs to be at least two zero entries for the partial matrix not to have a rank-$2$ completion with nonnegative entries.
\end{example}

\begin{lemma}\label{cor:all_but_one_entry_missing_in_a_row}
Let $E \subseteq [p] \times [q]$ be a pattern of observed entries such that 
there exists a row $i \in [p]$ with at most one observed entry $(i,j)$.  
Let $M_E \in \mathbb{R}^E_{\geq 0}$ be a partial matrix with nonnegative 
observed entries.  Assume further that whenever $m_{ij} \neq 0$, the column 
$j$ contains at least one additional nonzero entry.  
If the partial matrix obtained from $M_E$ by deleting row $i$ has a 
nonnegative rank at most $r$ completion, then $M_E$ has a 
nonnegative rank at most $r$ completion. An analogous result holds when there exists a column with at most one observed entry.
\end{lemma}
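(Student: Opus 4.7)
The plan is to fix a nonnegative rank at most $r$ completion $N \in \mathbb{R}_{\geq 0}^{(p-1) \times q}$ of the partial matrix obtained from $M_E$ by deleting row $i$, together with a size-$r$ nonnegative factorization $N = AB$ with $A \in \mathbb{R}_{\geq 0}^{(p-1) \times r}$ and $B \in \mathbb{R}_{\geq 0}^{r \times q}$. I then want to construct a nonnegative row vector $a \in \mathbb{R}_{\geq 0}^r$ and insert the row $a^T B$ into $N$ at position $i$ to obtain a matrix $\widetilde N \in \mathbb{R}_{\geq 0}^{p \times q}$. By construction $\widetilde N = \widetilde A B$, where $\widetilde A$ is $A$ with the row $a^T$ inserted at position $i$, so $\widetilde N$ has a size-$r$ nonnegative factorization and hence nonnegative rank at most $r$. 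Outside row $i$ the matrix $\widetilde N$ agrees with $N$, which already matches $M_E$ on observed entries, so it remains only to choose $a$ so that $(a^T B)_j = m_{ij}$ whenever $(i,j) \in E$.

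The construction of $a$ then splits into cases according to the observed entries of row $i$. If row $i$ has no observed entry, take $a = 0$, so the inserted row is zero. If row $i$ has a single observed entry $(i,j)$ with $m_{ij} = 0$, again take $a = 0$; the inserted row is zero and matches the observed value. The main case is $m_{ij} > 0$, where I use the extra hypothesis: column $j$ then contains another nonzero observed entry, say $(k,j)$ with $m_{kj} > 0$. Since $N$ completes the deleted partial matrix, the corresponding row of $N$ has value $m_{kj}$ in column $j$, and because $N = AB$ with $A, B \geq 0$, the sum $\sum_l A_{kl} B_{lj} > 0$ forces $B_{lj} > 0$ for some $l \in [r]$. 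Setting $a := (m_{ij}/B_{lj})\, e_l$ gives a nonnegative vector with $a^T b_j = m_{ij}$, as required, while preserving nonnegativity of $a^T B$.

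The column version of the statement follows by transposing, applying the row version to $M_E^T$, and transposing back; nonnegative rank is invariant under transposition. The only nontrivial step, and the place where the extra hypothesis is essential, is ensuring $b_j \neq 0$ in the case $m_{ij} > 0$: without a second nonzero entry in column $j$, the column $b_j$ of $B$ could vanish, making $a^T b_j = m_{ij} > 0$ unsolvable with $a \geq 0$. I therefore expect the main obstacle to be conceptual rather than technical, namely identifying this hypothesis as precisely what rules out the obstruction $b_j = 0$; the remaining verifications are just bookkeeping on the factorization $\widetilde N = \widetilde A B$.
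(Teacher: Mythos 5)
Your proposal is correct and follows essentially the same route as the paper: complete the missing row by a nonnegative vector whose $j$-th coordinate is forced to be $m_{ij}$, using the extra hypothesis (a second nonzero entry $m_{kj}$ in column $j$) to guarantee such a vector exists. The only cosmetic difference is that the paper takes the new row to be $(m_{ij}/m_{kj})$ times the $k$-th row of the completion (so it never needs to open up the factorization), whereas you take $(m_{ij}/B_{lj})\,e_l^{T}B$ for a coordinate $l$ with $B_{lj}>0$; both are valid and use the hypothesis in the same way.
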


\begin{proof}
First assume that the matrix without the $i$th row is completed to a matrix of nonnegative rank at most $r$. If there are no observed entries in the $i$th row or the only observed entry is zero, then we set the entries in the $i$th row to be zero. Then the completion has nonnegative rank at most $r$.

Now consider the case when the $i$th row has exactly one nonzero observed entry $m_{ij}$. By assumption there exists  $k \in [p] \backslash \{i\}$ such that $m_{kj} \neq 0$. We take the $i$th row to be $m_{ij}/m_{kj}$ times the $k$th row of $M_E$. This operation does not increase the nonnegative rank. 
\end{proof}

\begin{proof}[Proof of \Cref{thm:3x3_nonnegative_rank_2}]
    If all missing entries are in different rows and columns, then by~\Cref{ex:rank-2_counter_example_diagonal_missing_entries} there are partial matrices that have rank-$2$ but no nonnegative rank-$2$ completions. Assume now that not all missing entries are in different rows and columns. This means that there are at least two missing entries in the same row or column. Let $M_E$ be a partial nonnegative $3 \times 3$ matrix such that $M_E$ has a rank at most two completion.  If there is a row $i$ with at most one observed entry $m_{ij}$ which satisfies that  whenever $m_{ij} \neq 0$, the column 
$j$ contains at least one additional nonzero entry, then by~\Cref{cor:all_but_one_entry_missing_in_a_row} the matrix has a completion of nonnegative rank at most two because the matrix without the $i$th row is a $2 \times 3$ matrix and therefore has a nonnegative rank at most two completion. 

If $m_{ij}\neq 0$, $m_{kj} = 0$ for $k \in [3] \backslash \{i\}$ and $M_E$ can be completed to a matrix of rank at most two, then the submatrix $M_{\hat{i},:}$ must have rank at most one completion. Otherwise, any completion of the missing entries would give a rank-$3$ matrix. We complete $M_{\hat{i},:}$ to a rank-$1$ matrix with nonnegative entries which is possible by~\Cref{lem:rank_1_nonnegative_completion_iff_completion} and the missing entries in the $i$th row with arbitrary nonnegative values. This yields a rank at most two matrix with nonnegative entries, and therefore its nonnegative rank is also at most two. Analogous arguments work if there is a column with at most one observed entry.

The other direction of the statement is immediate.
\end{proof}

\begin{remark}
The analogue of~\Cref{thm:3x3_nonnegative_rank_2} does not hold for larger matrices. It is not possible to complete the partial nonnegative matrix
    \[
    \begin{pmatrix}
    ? & ? & 0 & 1\\
    m_{21} & 1 & 1 & 0\\
    m_{31} & 0 & 1 & 1
    \end{pmatrix}
    \]
    to a nonnegative rank at most two matrix for any $m_{21}, m_{31} \in \mathbb{R}_{\geq 0}$.
    This follows from \Cref{ex:rank-2_counter_example_diagonal_missing_entries} where it was shown that the submatrix $M_{123,234}$ cannot be completed to a nonnegative rank at most two matrix.
    More generally, whenever a partial matrix has a submatrix that cannot be completed to a nonnegative rank at most $r$ matrix, it follows that the partial matrix cannot be completed to a nonnegative rank at most $r$ matrix. 
\end{remark}

\section{One missing entry} \label{sec:one_missing_entry}

In this section, we study the completability of partial matrices with one missing entry.
In the previous section, we showed that for $r \in \{1,2\}$, a partial nonnegative matrix admits a nonnegative rank-$r$ completion if and only if it admits a rank-$r$ completion that is nonnegative.
Our first main result in this section is~\Cref{example:one_missing_entry_no_nonnegative_rank_three_completion}, which demonstrates that this equivalence fails for $r=3$.
Up to this point, we focus on the coordinate projection that forgets one entry of the variety of matrices of rank at most $r$, which will be used to construct~\Cref{example:one_missing_entry_no_nonnegative_rank_three_completion}. 
Our second main result is~\Cref{thm:nonnegative_rank_completability_one_entry_missing}, which characterizes rank-$r$ completability for partial matrices with one missing entry and determines when such a completion is unique.

\begin{definition}
The singular locus of a map $f:X \rightarrow Y$ of varieties consists of the points $p \in X$ where the differential $df:T_pX \rightarrow T_pY$ does not have maximal possible rank.
\end{definition}

\begin{lemma} \label{lemma:singular_locus_one_missing_entry}
Let $(i,j) \in [p] \times [q]$ and $E:= \left( [p] \times [q] \right) \backslash \{(i,j)\}$. The singular locus of the projection $\pi_E: \mathcal{V}^{p \times q}_r \rightarrow \mathbb{R}^E$ is the subvariety of matrices $M \in \mathcal{V}^{p \times q}_r$ where  $M_{\hat{i},:}$ or $M_{:,\hat{j}}$ has rank at most $r-1$. 
\end{lemma}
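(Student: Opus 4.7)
The plan is to split $\mathcal V_r^{p\times q}$ into its smooth locus $\{M:\rank(M)=r\}$ and its singular locus $\{M:\rank(M)<r\}$, compute the rank of $d\pi_E|_M$ on each, and compare with the generic rank $r(p+q-r)$, which I read as the ``maximal possible rank'' of the preceding definition.

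On the smooth locus I will use the standard description
\[
T_M\mathcal V_r^{p\times q}=\{N\in\mathbb R^{p\times q}:u^T N v=0\text{ for all }u\in\operatorname{col}(M)^{\perp},\ v\in\operatorname{row}(M)^{\perp}\},
\]
a linear subspace of dimension $r(p+q-r)$. Since the ambient kernel of $\pi_E$ is $\operatorname{span}(e_i e_j^T)$, the kernel of $d\pi_E|_{T_M\mathcal V_r}$ is $T_M\mathcal V_r\cap\operatorname{span}(e_i e_j^T)$, so $d\pi_E|_M$ attains the maximal rank $r(p+q-r)$ iff $e_i e_j^T\notin T_M\mathcal V_r$. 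Substituting $N=e_i e_j^T$ into the description above, $e_i e_j^T\in T_M\mathcal V_r$ iff $u_i v_j=0$ for every $u\perp\operatorname{col}(M)$ and $v\perp\operatorname{row}(M)$, equivalently $e_i\in\operatorname{col}(M)$ or $e_j\in\operatorname{row}(M)$. Because the coordinate projection $\mathbb R^p\to\mathbb R^{p-1}$ dropping the $i$-th entry has kernel $\operatorname{span}(e_i)$ and sends $\operatorname{col}(M)$ onto $\operatorname{col}(M_{\hat i,:})$, the identity $\rank(M_{\hat i,:})=r-\dim(\operatorname{col}(M)\cap\operatorname{span}(e_i))$ yields $e_i\in\operatorname{col}(M)\iff\rank(M_{\hat i,:})\le r-1$; the column case is symmetric.

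For $M$ with $\rank(M)=s<r$ every $r\times r$ minor of $M$ vanishes, so by Laplace expansion the gradient at $M$ of each defining $(r+1)\times(r+1)$ minor of the variable matrix is a linear combination of $r\times r$ minors of $M$ and hence vanishes. Therefore $T_M\mathcal V_r=\mathbb R^{p\times q}$ and $d\pi_E|_M$ is the full coordinate projection of rank $pq-1\neq r(p+q-r)$ in the relevant regime $r<\min(p,q)$, so $M$ belongs to the singular locus of $\pi_E$; on the other hand $\rank(M_{\hat i,:})\le\rank(M)\le r-1$, so $M$ also lies in the right-hand set of the lemma. Combining the two cases completes the proof.

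The step I expect to require the most care is the interpretation of ``maximal possible rank'' at Zariski-singular points of $\mathcal V_r$, where the tangent space jumps up to the whole ambient $\mathbb R^{p\times q}$: one has to read the definition as saying that maximality means equality to the generic rank $r(p+q-r)$ on $\mathcal V_r$, so that the jump to rank $pq-1$ also counts as failure of maximality and places such rank-deficient matrices in the singular locus.
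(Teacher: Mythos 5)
On the stratum of matrices of rank exactly $r$, your argument is correct and takes a genuinely different route from the paper. You use the intrinsic description of the tangent space of the determinantal variety at a smooth point and reduce the whole question to whether $e_ie_j^T$ lies in $T_M\mathcal{V}_r^{p\times q}$, which you correctly translate into $e_i\in\operatorname{col}(M)$ or $e_j\in\operatorname{row}(M)$, i.e.\ $\rank(M_{\hat i,:})\le r-1$ or $\rank(M_{:,\hat j})\le r-1$. The paper instead parametrizes $\mathcal{V}_r^{p\times q}$ by $\mu(A,B)=AB$, writes out the full Jacobian of $\mu$, and performs a case-by-case linear-dependence analysis on its columns; your computation replaces that with a few lines and makes the appearance of the two rank conditions conceptually transparent. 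This part is a clean alternative proof.

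The genuine gap is in the rank-deficient case, and it is not only a matter of how one reads the definition. At a point with $\rank(M)<r$ the Zariski tangent space is indeed all of $\mathbb{R}^{p\times q}$, so $d\pi_E$ restricted to it is the full coordinate projection of rank $pq-1$, and you conclude by asserting $pq-1\ne r(p+q-r)$ whenever $r<\min(p,q)$. But $pq-1-r(p+q-r)=(p-r)(q-r)-1$, which vanishes exactly when $p=q=r+1$ --- that is, in the case $p=q=4$, $r=3$ that the paper actually uses. There your own criterion gives rank equal to the generic rank and would place rank-deficient matrices \emph{outside} the singular locus, contradicting the statement (such $M$ satisfy $\rank(M_{\hat i,:})\le r-1$). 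Even when $(p-r)(q-r)>1$, the rank $pq-1$ \emph{exceeds} $r(p+q-r)$, so under any literal reading the differential does attain a maximal rank at these points; your fix of reading ``maximal'' as ``equal to the generic value'' is already strained and still fails at $p=q=r+1$. The paper avoids this entirely by taking the tangent space to be the image of $d\mu$ at a factorization $M=AB$: when $\rank(M)<r$ one of $A,B$ is rank-deficient, that image has dimension strictly less than $pr+qr-r^2$, and such points are singular for $\pi_E$ automatically. To repair your argument you should adopt that convention on the rank-deficient stratum (or otherwise justify that the intended singular locus contains all matrices of rank below $r$) rather than relying on the Zariski tangent space computation there.
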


\begin{proof}
We start by characterizing the differential $d\pi_E$. Consider the matrix multiplication map 
\[
    \mu \colon \mathbb{R}_r^{p \times r} \times \mathbb{R}_r^{r \times q} \to \mathbb{R}_r^{p \times q}, (A,B) \mapsto AB. 
\]
Its Jacobian is 
\renewcommand{\kbldelim}{(}
\renewcommand{\kbrdelim}{)}
\[
\kbordermatrix{
& m_{11} & m_{12} & \cdots & m_{1q} & m_{21} & m_{22} & \cdots & m_{2q} & \cdots & m_{p1} & m_{p2} & \cdots & m_{pq} \\
a_{11} & b_{11} & b_{12} & \cdots & b_{1q} & 0 & 0 & \cdots & 0 & \cdots & 0 & 0 & \cdots & 0 \\
a_{12} & b_{21} & b_{22} & \cdots & b_{2q} & 0 & 0 & \cdots & 0 & \cdots & 0 & 0 & \cdots & 0 \\
\vdots & \vdots & \vdots & \ddots & \vdots & \vdots & \vdots & \ddots & \vdots & \vdots & \vdots & \vdots & \ddots & \vdots \\
a_{1r} & b_{r1} & b_{r2} & \cdots & b_{rq} & 0 & 0 & \cdots & 0 & \cdots & 0 & 0 & \cdots & 0 \\
a_{21} & 0 & 0 & \cdots & 0 & b_{11} & b_{12} & \cdots & b_{1q} & \cdots & 0 & 0 & \cdots & 0 \\
a_{22} & 0 & 0 & \cdots & 0 & b_{21} & b_{22} & \cdots & b_{2q} & \cdots & 0 & 0 & \cdots & 0 \\
\vdots & \vdots & \vdots & \ddots & \vdots & \vdots & \vdots & \ddots & \vdots & \vdots & \vdots & \vdots & \ddots & \vdots \\
a_{2r} &0 & 0 & \cdots & 0 & b_{r1} & b_{r2} & \cdots & b_{rq} & \cdots & 0 & 0 & \cdots & 0\\
\vdots & \vdots & \vdots & \vdots & \vdots & \vdots & \vdots & \vdots & \vdots & \vdots & \vdots & \vdots & \vdots \\
a_{p1} & 0 & 0 & \cdots & 0 & 0 & 0 & \cdots & 0 & \cdots & b_{11} & b_{12} & \cdots & b_{1q} \\
a_{p2} & 0 & 0 & \cdots & 0 & 0 & 0 & \cdots & 0 & \cdots & b_{21} & b_{22} & \cdots & b_{2q}\\
\vdots & \vdots & \vdots & \ddots & \vdots & \vdots & \vdots & \ddots & \vdots & \vdots & \vdots & \vdots & \ddots & \vdots \\
a_{pr} & 0 & 0 & \cdots & 0 & 0 & 0 & \cdots & 0 & \cdots & b_{r1} & b_{r2} & \cdots & b_{rq}\\
b_{11} & a_{11} & 0 & \cdots & 0 & a_{21} & 0 & \cdots & 0 & \cdots & a_{p1} & 0 & \cdots & 0 \\
b_{12} & 0 & a_{11} & \cdots & 0 & 0 & a_{21} & \cdots & 0 & \cdots & 0 & a_{p1} & \cdots & 0 \\
\vdots & \vdots & \vdots & \ddots & \vdots & \vdots & \vdots & \ddots & \vdots & \vdots & \vdots & \vdots & \ddots & \vdots \\
b_{1q} & 0 & 0 & \cdots & a_{11} & 0 & 0 & \cdots & a_{21} & \cdots & 0 & 0 & \cdots & a_{p1} \\
b_{21} & a_{12} & 0 & \cdots & 0 & a_{22} & 0 & \cdots & 0 & \cdots & a_{p2} & 0 & \cdots & 0 \\
b_{22} & 0 & a_{12} & \cdots & 0 & 0 & a_{22} & \cdots & 0 & \cdots & 0 & a_{p2} & \cdots & 0 \\
\vdots & \vdots & \vdots & \ddots & \vdots & \vdots & \vdots & \ddots & \vdots & \vdots & \vdots & \vdots & \ddots & \vdots \\
b_{2q} & 0 & 0 & \cdots & a_{12} & 0 & 0 & \cdots & a_{22} & \cdots & 0 & 0 & \cdots & a_{p2} \\
\vdots & \vdots & \vdots & \vdots & \vdots & \vdots & \vdots & \vdots & \vdots & \vdots & \vdots & \vdots & \vdots \\
b_{r1} & a_{1r} & 0 & \cdots & 0 & a_{2r} & 0 & \cdots & 0 & \cdots & a_{pr} & 0 & \cdots & 0 \\
b_{r2} & 0 & a_{1r} & \cdots & 0 & 0 & a_{2r} & \cdots & 0 & \cdots & 0 & a_{pr} & \cdots & 0 \\
\vdots & \vdots & \vdots & \ddots & \vdots & \vdots & \vdots & \ddots & \vdots & \vdots & \vdots & \vdots & \ddots & \vdots \\
b_{rq} & 0 & 0 & \cdots & a_{1r} & 0 & 0 & \cdots & a_{2r} & \cdots & 0 & 0 & \cdots & a_{pr}
}
\]
The tangent space $T_M \mathcal{V}^{p \times q}_r$ at the matrix $M=AB$ is spanned by the rows of the above Jacobian. The differential $d\pi_E$ maps it to the row span of the above Jacobian with the column corresponding to the entry $m_{ij}$ removed. Therefore, the singular locus of $\pi_E$ consists of $M \in \mathcal{V}^{p \times q}_r$ such that Jacobian with the column corresponding to $m_{ij}$ removed does not have maximal possible rank.

Without loss of generality, we assume that the missing entry is $(1,1)$ and from now on we consider the above Jacobian with the first column removed. It has size $(pr+qr) \times (pq-1)$ and its rank at a generic point is $pr+qr-r^2$. A matrix $M$ is in the singular locus of $\pi_E$ if the rank of the corresponding Jacobian is less than $pr+qr-r^2$.

First assume that the rank of $M_{\hat{1},:}$ is at most $r-1$. In this case, the rank of the submatrix of the Jacobian with columns indexed by the entries of the submatrix  $M_{\hat{1},:}$ is at most $(p-1)(r-1)+q(r-1)-(r-1)^2$. There are $q-1$ columns of the Jacobian corresponding to the entries of the first row of $M_{\hat{1},:}$ without the $(1,1)$-entry. The corresponding columns can increase the rank of the Jacobian at most by $q-1$. Therefore, the rank of the Jacobian is at most 
\[
(p-1)(r-1)+q(r-1)-(r-1)^2 + (q-1)=pr+qr-r^2-1+(r-p) \leq pr+qr-r^2-1
\]
because $p \geq r$. Therefore, $M$ is in the singular locus of $\pi_E$. An analogous argument works if the rank of $M_{:,\hat{1}}$ is at most $r-1$. 

For the converse direction that a matrix in the singular locus implies that $M_{\hat{1},:}$ or $M_{:,\hat{1}}$ has rank at most $r-1$, we consider two different cases. First assume that either the first row of $M$ is not a linear combination of the other rows of $M$ or the first column of $M$ is not a linear combination of the other columns of $M$. Then either the rank of $M_{\hat{1},:}$ is at most $r-1$, and in the second case the rank of $M_{:,\hat{1}}$ is at most $r-1$.
 
For the second case assume that both the first row of $M$ is a linear combination of the other rows of $M$ and the first column of $M$ is a linear combination of the other columns of $M$. Consider the submatrix of $M$ without the first row. Since it has rank at most $r$, it has at most $r$ linearly independent rows. Without loss of generality, we can assume that each of the rows $r+2,\ldots,p$ can be written as a linear combination of the rows $2,\ldots,r+1$. Similarly, without loss of generality, we can assume that each of the columns $r+2,\ldots,q$ can be written as a linear combination of the columns $2,\ldots,r+1$. Let $S:=\{1,r+2,\ldots,p\}$ and $T:=\{1,r+2,\ldots,q\}$. 
The entries indexed by the elements in $(S \times T) \backslash \{(1,1)\}$ can be written in terms of the other entries of the partial matrix $M$ by our assumptions.
The same is true for the corresponding columns of the Jacobian. 
Therefore, we consider from now on the submatrix $J'$ of the Jacobian with these columns removed, which has precisely $pr+qr-r^2$ columns. 
A point belongs to the singular locus of the projection $\pi_E$ if and only if the rank of the Jacobian $J'$ at this point is less than $pr+qr-r^2$ which means that there exists a column of the Jacobian $J'$ that can be written as a nontrivial linear combination of other columns of the Jacobian. 

Assume that there exist coefficients $\lambda_{ij} \in \mathbb{R}$, not all of them zero, such that the linear combination of the columns of the Jacobian $J'$ with these coefficients add to the zero column. If there exists $i \in S$ such that at least one of $\lambda_{i,2},...,\lambda_{i,r+1}$ is nonzero, then by looking at the rows indexed by $a_{i,1},\ldots,a_{i,r}$ of the Jacobian, we get that the second to $r+1$-st column of $B$ are linearly dependent, i.e., the rank of this matrix is at most $r-1$. By earlier assumptions, the matrix that is obtained from $B$ by deleting the first column has rank at most $r-1$ as well and hence for the matrix that is obtained from $M$ by deleting the first column has rank at most $r-1$.

If there exists $j \in T$ such that at least one of $\lambda_{2,j},...,\lambda_{r+1,j}$ is nonzero, then by looking at the rows indexed by $b_{1,j},\ldots,b_{r,j}$ of the Jacobian, we get that the second to $r+1$-st row of $A$ are linearly dependent, i.e., the rank of this matrix is at most $r-1$. By earlier assumptions, the matrix that is obtained from $A$ by deleting the first row has rank at most $r-1$ as well and hence for the matrix that is obtained from $M$ by deleting the first row has rank at most $r-1$.

Assume now that we are not in one of the previous cases. Let $k,l \in \{2,\ldots,r+1\}$ be such that $\lambda_{k,l} \neq 0$. Since we have excluded the previous cases, we have that $\lambda_{k,1}=\lambda_{k,r+2}= \ldots = \lambda_{k,q}=0$. Then by looking at the rows indexed by $a_{k,1},\ldots,a_{k,r}$ of the Jacobian, we get that the second to $r+1$-st column of $B$ are linearly dependent, i.e., the rank of this matrix is at most $r-1$. By earlier assumptions, the matrix that is obtained from $B$ by deleting the first column has rank at most $r-1$ as well and hence for the matrix that is obtained from $M$ by deleting the first column has rank at most $r-1$.
\end{proof}

\begin{remark}
The Jacobian appearing in the proof of~\Cref{lemma:singular_locus_one_missing_entry} is the transpose of the completion matrix in~\cite{singer2010uniqueness}. 
\end{remark}

\begin{proposition}\label{prop:one_entry_missing_homeomorphic}
Let $(i,j) \in [p] \times [q]$ and $E:= \left( [p] \times [q] \right) \backslash \{(i,j)\}$. Then $\pi_E: \mathcal{V}^{p \times q}_r \rightarrow \mathbb{R}^E$ is a homeomorphism on  $\mathcal{M}^{p \times q}_r$ outside the singular locus of $\pi_E$. Outside the singular locus of $\pi_E$, the points of the boundary of $\mathcal{M}^{p \times q}_r$ map to the boundary of $\pi_E(\mathcal{M}^{p \times q}_r)$ and the points of the interior of $\mathcal{M}^{p \times q}_r$ map to the interior of $\pi_E(\mathcal{M}^{p \times q}_r)$. The points on the singular locus map to the boundary of $\pi_E(\mathcal{M}^{p \times q}_r)$.
\end{proposition}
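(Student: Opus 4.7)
The plan is to break the proposition into three claims and deduce each from \Cref{lemma:singular_locus_one_missing_entry} together with an injectivity argument. First I would show that $\pi_E$ restricted to $\mathcal{V}^{p \times q}_r \setminus S$, where $S$ denotes the singular locus of $\pi_E$, is injective. Take $M, M' \in \mathcal{V}^{p \times q}_r \setminus S$ with $\pi_E(M) = \pi_E(M')$, so $M$ and $M'$ agree on all entries except $(i,j)$. By \Cref{lemma:singular_locus_one_missing_entry} we may assume $\rank(M_{\hat{i},:}) = r$; since $M_{\hat{i},:} = M'_{\hat{i},:}$, also $\rank(M'_{\hat{i},:}) = r$. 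Now \Cref{lemma:removing-one-row-and-one-column} forces $\rank(M_{\hat{i},\hat{j}}) = r$ as well (otherwise $\rank(M_{:,\hat{j}}) \leq r-1$, contradicting $M \notin S$), so the $r$-dimensional row span of $M_{\hat{i},:}$ inside $\mathbb{R}^q$ projects injectively under the map that forgets coordinate $j$. Since $\rank(M) \leq r$ forces $M_{i,:}$ to lie in this row span, $M_{i,:}$ is uniquely determined by $M_{i,\hat{j}}$, so $m_{ij} = m'_{ij}$. The case $\rank(M_{:,\hat{j}}) = r$ is symmetric.

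Combining injectivity with the full-rank statement of \Cref{lemma:singular_locus_one_missing_entry} and the fact that $\mathcal{V}^{p \times q}_r$ is smooth at every point outside $S$ (since $\rank(M_{\hat{i},:}) = r$ or $\rank(M_{:,\hat{j}}) = r$ forces $\rank(M) = r$, placing $M$ in the smooth locus), the inverse function theorem shows $\pi_E$ is a local diffeomorphism and hence, by global injectivity, a homeomorphism from $\mathcal{V}^{p \times q}_r \setminus S$ onto its image. Restricting to $\mathcal{M}^{p \times q}_r \setminus S$ yields the first claim. Because homeomorphisms preserve open and closed sets, the relative interior of $\mathcal{M}^{p \times q}_r$ in $\mathcal{V}^{p \times q}_r$ outside $S$ maps to the relative interior of $\pi_E(\mathcal{M}^{p \times q}_r)$ in $\pi_E(\mathcal{V}^{p \times q}_r)$, and similarly for boundary points.

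For the last claim, that every $M \in S \cap \mathcal{M}^{p \times q}_r$ maps to the boundary of $\pi_E(\mathcal{M}^{p \times q}_r)$, I would first observe that $S$ is saturated under the fibers of $\pi_E$: if $M$ and $M'$ agree outside $(i,j)$ then $M_{\hat{i},:} = M'_{\hat{i},:}$, so the rank condition defining $S$ is preserved (and analogously for $M_{:,\hat{j}}$). Hence $\pi_E(S)$ is disjoint from $\pi_E(\mathcal{V}^{p \times q}_r \setminus S)$, and since $S$ is a proper subvariety of $\mathcal{V}^{p \times q}_r$ of lower dimension, its image $\pi_E(S)$ is lower-dimensional in $\pi_E(\mathcal{V}^{p \times q}_r)$. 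Given $M \in S \cap \mathcal{M}^{p \times q}_r$, I would construct a perturbation $M(t) \in \mathcal{V}^{p \times q}_r$ with $M(0) = M$ that leaves $S$ for $t \neq 0$, chosen so that for $t$ on one side of $0$ the path leaves $\mathcal{M}^{p \times q}_r$; such a direction exists in $T_M \mathcal{V}^{p \times q}_r \setminus T_M S$ because this set is nonempty. By injectivity outside $S$, the unique preimage of $\pi_E(M(t))$ in $\mathcal{V}^{p \times q}_r$ is $M(t)$ itself, which is not in $\mathcal{M}^{p \times q}_r$, so $\pi_E(M(t)) \notin \pi_E(\mathcal{M}^{p \times q}_r)$, forcing $\pi_E(M)$ to lie on the boundary. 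The main obstacle is this perturbation step: while the saturation and dimension count are routine, exhibiting a perturbation within $\mathcal{V}^{p \times q}_r$ that actually leaves $\mathcal{M}^{p \times q}_r$ requires care, especially for $r \geq 3$ where $\mathcal{M}^{p \times q}_r$ is a proper semi-algebraic subset of $\mathcal{V}^{p \times q}_r \cap \mathbb{R}^{p \times q}_{\geq 0}$, and one likely needs to invoke the nested polytope description recalled in \Cref{sec:geometric_description}.
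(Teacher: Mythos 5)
Your treatment of the first two claims is essentially sound and close in spirit to the paper's: your row-span argument for injectivity is the same mechanism as the paper's explicit inverse $m_{ij}=f/\det(M_{K,L})$ obtained by Laplace expansion. One caveat: an injective immersion (which is what the inverse function theorem gives you here, since $\dim \mathcal{V}^{p\times q}_r < |E|$ in general) is only a \emph{local} embedding, and "local embedding plus global injectivity" does not by itself yield a homeomorphism onto the image (cf.\ an irrational line on a torus). This is fixable in your framework — your injectivity argument in fact exhibits the missing entry as a rational, hence continuous, function of the observed entries on each chart where an $r\times r$ minor of $M_{\hat i,\hat j}$ is nonzero, which is exactly how the paper closes this point — but as written the step is incomplete.

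The genuine gap is in the last claim. Your strategy is to perturb $M$ \emph{inside} $\mathcal{V}^{p\times q}_r$ along a curve that exits $\mathcal{M}^{p\times q}_r$, and you correctly flag this as the main obstacle; in fact no such curve need exist. Take $r=2$, $p=q=3$ and $M$ the all-ones matrix: it lies in the singular locus (every $M_{\hat i,:}$ has rank $1\le r-1$), yet every matrix of rank $\le 2$ sufficiently close to $M$ is entrywise positive and hence lies in $\mathcal{M}^{3\times 3}_2=\mathcal{V}^{3\times 3}_2\cap\mathbb{R}^{3\times 3}_{\ge 0}$, so $M$ is \emph{interior} to $\mathcal{M}^{3\times 3}_2$ relative to $\mathcal{V}^{3\times 3}_2$ and no tangent direction leaves it. The correct witnesses that $\pi_E(M)$ is a boundary point are not images of rank-$\le r$ matrices at all: one perturbs the \emph{partial} matrix in the ambient $\mathbb{R}^E$ so that $M_{\hat i,:}$ and $M_{:,\hat j}$ acquire rank exactly $r$ while $M_{\hat i,\hat j}$ keeps rank $r-1$; by \Cref{lemma:removing-one-row-and-one-column} such a partial matrix admits no rank-$\le r$ completion, so arbitrarily close to $\pi_E(M)$ there are points outside $\pi_E(\mathcal{V}^{p\times q}_r)\supseteq\pi_E(\mathcal{M}^{p\times q}_r)$. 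This is the paper's argument, and it is both simpler and unavoidable: the nested-polytope machinery you propose to invoke addresses whether a nonnegative matrix has nonnegative rank $r$, not whether a nearby partial matrix is completable at all, so it cannot repair this step.
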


\begin{proof}
By~\Cref{lemma:singular_locus_one_missing_entry}, the singular locus of $\pi_E$ consists of points where $M_{\hat{i},:}$ or $M_{:,\hat{j}}$ is of rank at most $r-1$. By~\Cref{lemma:removing-one-row-and-one-column}, partial matrices in the image of $\pi_E$ cannot have $\rank(M_{\hat{i},\hat{j}}) \leq r-1$ but $\rank(M_{\hat{i},:})=\rank(M_{:,\hat{j}})=r$. 
Therefore the points of $\mathcal{V}^{p \times q}_r$ outside the singular locus of $\pi_E$ have an $r \times r$ minor of $M_{\hat{i},\hat{j}}$ that is nonzero. 
Let $K \subseteq [p]\backslash \{i\}$ and $L \subseteq [q] \backslash \{j\}$ satisfying $|K|=|L|=r$ be such that $\det (M_{K,L}) \neq 0$. 
Consider the minor $\det( M_{K \cup \{i\},L \cup \{j\}})$. 
Using Laplace expansion, we can write the determinant $\det(M)=m_{ij} \det (M_{K,L})-f$, where $f$ does not depend on the entry $m_{ij}$. 
The inverse of the projection $\pi_E$ on the open set defined by $\det (M_{K,L}) \neq 0$ is given by $m_{ij}=\frac{f}{\det (M_{K,L})}$ for the missing entry $m_{ij}$. 
The set $\pi_E(\mathcal{V}^{p \times q}_r)$ can be covered by open sets such that $\det (M_{K,L}) \neq 0$ holds on the open set for some  $K \subseteq [p]\backslash \{i\}$ and $L \subseteq [q] \backslash \{j\}$ satisfying $|K|=|L|=r$. 
Moreover, the inverse maps for different sets $K,L$ agree on the intersection of the open sets. 
The map $\pi_E$ is a bijection on $\mathcal{V}^{p \times q}_r$ outside the singular locus of $\pi_E$ with the inverse defined on each open set as above and both functions are continuous. 
Therefore, $\pi_E$ is a homeomorphism both on $\mathcal{V}^{p \times q}_r$ and $\mathcal{M}^{p \times q}_r$ outside the singular locus of $\pi_E$.

The statements about interior and boundary outside singular locus follow by general properties of homeomorphisms. The image $M_E$ of a matrix in the singular locus of $\pi_E$ has in every neighborhood partial matrices that cannot be completed to a rank-$r$ matrix. This can be seen by modifying $M_E$ by an arbitrarily small modification  so  that $M_{\hat{i},:}$ and $M_{:,\hat{j}}$ have rank exactly $r$ and $M_{\hat{i},\hat{j}}$ has rank $r-1$.
\end{proof}

We illustrate this result on~\Cref{example:non_trivial_boundary_4x4}. For this, we need the following lemma.

\begin{lemma} \label{lemma:replace_zero_by_negative_entry}
Let $M \in \mathbb{R}^{p \times q}$ be a matrix of rank $r$ and nonnegative rank $r$. Let $(A,B)$ be a unique size-$r$ nonnegative matrix factorization of $M$ up to scalings and permutations. Let $\tilde{A}$ be a matrix that is obtained from $A$ by changing one zero entry to a negative number $\varepsilon$. Then there exists $\varepsilon_0<0$ such that for $\varepsilon \in (\varepsilon_0,0)$, the matrix $\tilde{M}:=\tilde{A}B$ does not have a size-$r$ nonnegative factorization.
\end{lemma}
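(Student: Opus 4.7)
The plan is to argue by contradiction: from a hypothetical sequence of nonnegative factorizations of $\tilde M_n:=\tilde A_n B$ with $\varepsilon_n\uparrow 0$, I will extract a limiting ``tangent direction'' $\eta$ at $(A,B)$ which, when exponentiated into a curve inside the fiber $\mu^{-1}(M)$, produces a one-parameter family of nonnegative factorizations of $M$ that is not a rescaling of $(A,B)$, contradicting uniqueness.

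Suppose for contradiction that there is a sequence $\varepsilon_n\uparrow 0$ for which each $\tilde M_n=\tilde A_nB$ admits a size-$r$ nonnegative factorization. Because $M$ has rank $r$, the matrix $A$ has full column rank; hence for $n$ large so does $\tilde A_n=A+\varepsilon_n e_{i_0}e_{k_0}^T$. Consequently every such factorization has the form $(\tilde A_n G_n, G_n^{-1}B)$ for a unique $G_n\in\mathrm{GL}_r(\mathbb{R})$ satisfying $\tilde A_nG_n\ge 0$ and $G_n^{-1}B\ge 0$. After normalizing (say, prescribing the diagonal of $G_n$ to be $1$) and extracting a subsequence, $G_n\to G^*\in\mathrm{GL}_r$. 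The limit $(AG^*,(G^*)^{-1}B)$ is then a nonnegative factorization of $M$, so by uniqueness $G^*=PD$ for a permutation $P$ and a positive diagonal $D$. Absorbing $P$ and $D$ into the choice of representative, we may assume $G_n\to I$ and write $G_n=I+\delta_n$ with $\delta_n\to 0$.

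Next I linearize. Expanding $\tilde A_n(I+\delta_n)\ge 0$ at the zero entry $(i_0,k_0)$ of $A$ gives
\[
 (A\delta_n)_{i_0,k_0}+\varepsilon_n\bigl(1+(\delta_n)_{k_0,k_0}\bigr)\ge 0,\quad\text{i.e.}\quad (A\delta_n)_{i_0,k_0}\ge |\varepsilon_n|(1+o(1)).
\]
For the other zero entries of $A$ and for the zero entries of $B$ (using $G_n^{-1}=I-\delta_n+O(\|\delta_n\|^2)$) one obtains inequalities $(A\delta_n)_{i,k}\ge O(|\varepsilon_n|\|\delta_n\|)$ and $(\delta_n B)_{k,j}\le O(\|\delta_n\|^2)$. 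Setting $\eta_n:=\delta_n/|\varepsilon_n|$ and (crucially) using the chosen normalization of $G_n$ to ensure $\eta_n$ is bounded, I extract a further subsequence $\eta_n\to\eta\in\mathbb{R}^{r\times r}$ satisfying
\[
 (A\eta)_{i_0,k_0}\ge 1,\qquad (A\eta)_{i,k}\ge 0\text{ for other zero entries of }A,\qquad (\eta B)_{k,j}\le 0\text{ for zero entries of }B.
\]

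Finally I exponentiate. Consider the curve $\gamma(t):=\bigl(A(I+t\eta),\,(I+t\eta)^{-1}B\bigr)$ for small $t>0$. Since $A(I+t\eta)\cdot(I+t\eta)^{-1}B=M$ identically, $\gamma(t)\in\mu^{-1}(M)$ for every $t$, and the inequalities above force $\gamma(t)\ge 0$ for all sufficiently small $t>0$; the only delicate point is at zero entries of $B$ where $(\eta B)_{k,j}$ happens to vanish, but this can be absorbed by adjusting $\eta$ by a small diagonal term (which preserves the strict inequality $(A\eta)_{i_0,k_0}>0$ and makes the other inequalities strict enough to dominate the $O(t^2)$ error). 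Thus $\gamma(t)$ is a size-$r$ nonnegative factorization of $M$ for small $t>0$. By the uniqueness hypothesis and continuity, $\gamma(t)=(AD(t),D(t)^{-1}B)$ for a smooth path of positive diagonals $D(t)$ with $D(0)=I$; since $A$ has full column rank, this forces $I+t\eta=D(t)$, so $\eta$ is diagonal. But then $(A\eta)_{i_0,k_0}=\eta_{k_0,k_0}A_{i_0,k_0}=0$, contradicting $(A\eta)_{i_0,k_0}\ge 1$.

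\textbf{Main obstacles.} The central technical hurdle is controlling the scale of $\delta_n$ so that $\eta_n=\delta_n/|\varepsilon_n|$ stays bounded and yields a limit with the strict inequality $(A\eta)_{i_0,k_0}\ge 1$ (rather than a degenerate $\ge 0$). The normalization of $G_n$ to have unit diagonal, combined with the uniqueness argument that already pins down the leading behavior of $G_n$, should deliver the needed a priori bound. A secondary nuisance is ensuring $\gamma(t)\ge 0$ beyond first order at zero entries of $B$ where the linear inequality is tight; a small interior perturbation of $\eta$ (e.g. adding a positive diagonal) should suffice without affecting the contradiction.
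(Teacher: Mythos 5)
Your strategy (compactness of normalized factorizations, linearization at $(A,B)$, then a contradiction with uniqueness) is genuinely different from the paper's argument, but as written it has two gaps that I do not see how to close. The first and most serious is the boundedness of $\eta_n=\delta_n/|\varepsilon_n|$. You flag this as the central hurdle and propose to handle it by normalizing the diagonal of $G_n$, but that normalization only controls which representative of the scaling/permutation orbit you pick; it gives no a priori relation between $\|\delta_n\|=\|G_n-I\|$ and $|\varepsilon_n|$. Nothing prevents $\|\delta_n\|/|\varepsilon_n|\to\infty$ (e.g.\ $\|\delta_n\|\sim\sqrt{|\varepsilon_n|}$), in which case the only available renormalization is $\eta_n=\delta_n/\|\delta_n\|$, and the key inequality degenerates from $(A\eta)_{i_0,k_0}\ge 1$ to $(A\eta)_{i_0,k_0}\ge 0$. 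A diagonal limit $\eta$ is then no longer excluded, and the final contradiction evaporates. The second gap is the exponentiation step: the linearized inequalities are only weak, and your proposed repair --- perturbing $\eta$ by a small positive diagonal $\lambda D$ --- provably does nothing at exactly the entries where the problem occurs, since $(A\lambda D)_{ik}=\lambda D_{kk}A_{ik}=0$ at a zero entry of $A$ and $(\lambda D B)_{kj}=\lambda D_{kk}B_{kj}=0$ at a zero entry of $B$. So tight linearized constraints cannot be made strict this way, and $\gamma(t)=(A(I+t\eta),(I+t\eta)^{-1}B)$ need not be nonnegative for small $t>0$; membership of $\eta$ in the linearized cone does not by itself contradict uniqueness of the actual factorization.

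For contrast, the paper argues directly and without any limiting process: every size-$r$ factorization of $\tilde M$ has the form $(\tilde AC,C^{-1}B)$; for $C$ not a scaled permutation, one factor of $(AC,C^{-1}B)$ has a negative entry by uniqueness; since $\tilde AC$ and $AC$ differ only in row $i$, that negative entry persists unless it sits in row $i$ of $AC$, and there $(\tilde AC)_{ik}=(AC)_{ik}+\varepsilon C_{jk}$ remains negative for $\varepsilon$ close enough to $0$, while for $C$ a scaled permutation the perturbed zero entry itself becomes $\varepsilon C_{jk}<0$. If you want to salvage your route, the real work is precisely a quantitative version of this persistence, i.e.\ a lower bound on how negative $(AC,C^{-1}B)$ must be in terms of the distance of $C$ from the scaled permutations; without such a bound neither the boundedness of $\eta_n$ nor the uniformity of $\varepsilon_0$ comes for free.
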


\begin{proof}
All size-$r$ factorizations of $M$ are of the form $(AC,C^{-1}B)$, where $C \in \operatorname{GL}_r(\mathbb{R})$.  Since $(A,B)$ is a unique size-$r$ nonnegative matrix factorization of $M$, either $AC$ or $C^{-1}B$ contains a negative entry unless $C$ is a composition of a positive scaling and a permutation matrix. 

Suppose $A$ and $\tilde{A}$ differ by the entry $(i,j)$. We will show that if either factor of $(AC,C^{-1}B)$ contains a negative entry, then also a factor of $(\tilde{A}C,C^{-1}B)$ contains a negative entry. The factorization $(\tilde{A}C,C^{-1}B)$ differs from $(AC,C^{-1}B)$ only in the $i$th row of $\tilde{A}C$. Therefore, if $C^{-1}B$ contains a negative entry or $AC$ contains a negative entry outside the $i$th row, then $(\tilde{A}C,C^{-1}B)$ contains a negative entry as well. The elements in the $i$th row of $\tilde{A}C$ satisfy $(\tilde{A}C)_{ik}=(AC)_{ik}+\varepsilon C_{jk}$. Hence if $(AC)_{ik}<0$, then there exists $\varepsilon_0 <0$ such that for $\varepsilon \in (\varepsilon_0,0)$ we have $(\tilde{A}C)_{ik}<0$.
\end{proof}

\begin{example} \label{example:non_trivial_boundary_4x4}
In this example, we will show for $4 \times 4$ matrices of rank three and nonnegative rank three that if we project to partial matrices missing one entry, then the boundary of this set contains matrices from the projection of the nontrivial boundary of $\mathcal{M}^{4 \times 4}_3$. 
In other words, the set $\pi_E(\mathcal{M}_3^{4 \times 4})$ has nontrivial boundaries which was not the case with  nonnegative rank-$1$ and -$2$.

Consider the matrices
\[
M = 
\begin{pmatrix}
12& 2& 1& 9\\ 
8& 6& 3& 10\\ 
4& 16& 13& 9\\ 
12& 4& 6& 5
\end{pmatrix}, \quad
A = 
\begin{pmatrix}
0 & 1 & 2\\ 
1 & 0 & 2\\ 
4 & 1 & 0\\ 
1 & 3 & 0
\end{pmatrix}, \quad
B = 
\begin{pmatrix}
0 & 4 & 3 & 2\\ 
4 & 0 & 1 & 1\\ 
4 & 1 & 0 & 4
\end{pmatrix}.
\]
Then $(A,B)$ is the unique size-$3$ nonnegative matrix factorization of $M$ up to permutations and scalings \cite[Section 6]{kubjas2015fixed}. 

Let $E = \left([4] \times [4] \right) \backslash \{(1,1)\}$. The partial matrix $M_E$ has a unique rank-$3$ completion, which is $M$. It is also the unique nonnegative rank-$3$ completion of $M_E$.

We claim that every neighborhood of $M_E$ has partial matrices that have a nonnegative rank-$3$ completion and points that have a rank-$3$ completion but no nonnegative rank-$3$ completion. The former is easy to show by replacing a zero entry in $A$ and $B$ by a small positive $\varepsilon$ and by considering matrices obtained as their product.   

To show the latter, let $A(\varepsilon)$ be the matrix that is obtained from $A$ by replacing the $(1,1)$ entry in $A$ by negative $\varepsilon$ and let $M(\varepsilon):=A(\varepsilon) B$. By~\Cref{lemma:replace_zero_by_negative_entry},  there exists $\varepsilon_0<0$ such that the matrix $M(\varepsilon)$ does not have a size-3 nonnegative factorization for $\varepsilon \in (\varepsilon_0,0)$. By taking $\varepsilon$ sufficiently close to zero, we can construct $M(\varepsilon)_E$ arbitrarily close to $M_E$. Since $M(\varepsilon)_E$ has a unique rank-$3$ completion, then it must be equal to $M(\varepsilon)$, which does not have nonnegative rank three. Hence $M(\varepsilon)_E$ has a rank-$3$ but not a nonnegative rank-$3$ completion. 

This example generalizes to matrices of size $p \times q$ of rank three and nonnegative rank three by adding $p-4$ nonnegative rows to $A$ and $q-4$ nonnegative columns to $B$.
\end{example}

\begin{example} \label{example:one_missing_entry_no_nonnegative_rank_three_completion}
    In \Cref{example:non_trivial_boundary_4x4} we show that in every neighborhood of the partial nonnegative matrix $M_E$ there exists a partial matrix $M(\varepsilon)_E$ with no nonnegative rank-$3$ completions.
    One example of such a partial matrix is 
    \begin{equation*}
        M(\varepsilon)_E = \begin{pmatrix}
            ? & 2 & 1 & 9 \\
            8 & 6 & 3 & 10 \\
            4 & 16 & 13 & 9 \\
            8 & 3 & 6 & 1
        \end{pmatrix}.
    \end{equation*}
    This matrix has a unique rank-$3$ completion 
    \begin{equation*}
        \begin{split}
            M(\varepsilon)&= \begin{pmatrix}
            12 & 2 & 1 & 9 \\
            8 & 6 & 3 & 10 \\
            4 & 16 & 13 & 9 \\
            8 & 3 & 6 & 1
        \end{pmatrix} = 
        \begin{pmatrix}
            0 & 1 & 2\\ 
            1 & 0 & 2\\ 
            4 & 1 & 0\\ 
            1 & 3 & -1
        \end{pmatrix}
        \begin{pmatrix}
            0 & 4 & 3 & 2\\ 
            4 & 0 & 1 & 1\\ 
            4 & 1 & 0 & 4
        \end{pmatrix} \\&= \underbrace{\begin{pmatrix}
            0 & 1 & 2\\
            1/5 & -4/5 & 3/5\\
            4/5 & -11/5 & -28/5\\
            1/5 & 11/5 & -12/5
        \end{pmatrix}}_{\hat A} \underbrace{\begin{pmatrix}
            44 & 27 & 19 & 42\\
            4 & 0 & 1 & 1\\
            4& 1 & 0&4
        \end{pmatrix}}_{\hat B}.
            \end{split}
    \end{equation*}
    The polytopes $P \subseteq Q$ associated to $M(\varepsilon)$ are depicted in \Cref{fig:one_missing_entry_counter_example}. 
    The polytopes were constructed from the cones $\{(x,y,z) \in \mathbb{R}^3 \mid \hat A x \geq 0 \}$ and $\operatorname{cone}(\hat B)$ by taking the slice $x=1$. 
    We have shown computationally using the \Cref{cor:rank-3_geometric_algorithm} that there does not exist a triangle $\Delta$ nested between $P$ and $Q$.
    The code for this computation can be found in the git repository of the project.
    \begin{figure}[htbp!]
        \centering
        \includegraphics[width=0.45\linewidth]{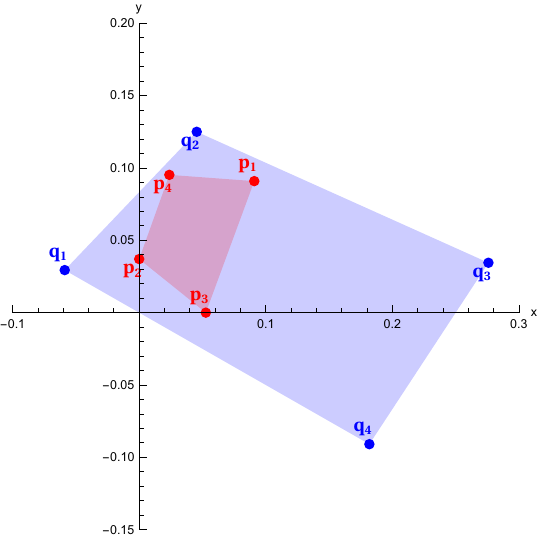}
        \caption{The nested polytopes associated to $M(\varepsilon)$ in \Cref{example:one_missing_entry_no_nonnegative_rank_three_completion}.}
        \label{fig:one_missing_entry_counter_example}
    \end{figure}
\end{example}

\begin{example}
It was shown in~\cite[Example 6.2]{kubjas2015fixed} that the algebraic boundary of $\mathcal{M}^{4 \times 4}_3$ contains $16$ irreducible components $\{m_{ij} = 0 \}$ corresponding to zero entries, and $288$ irreducible components corresponding to different zero patterns of factorizations such as 
\begin{equation}\label{eq:boundary_components_zero_patterns}
    \begin{pmatrix}
            0 & a_{12} & a_{13}\\
            a_{21} & 0 & a_{23}\\
            a_{31} & a_{32} & 0\\
            a_{41} & a_{42} & 0
        \end{pmatrix} \begin{pmatrix}
            0 & b_{12} & b_{13} & b_{14}\\
            b_{21} & 0 & b_{23} & b_{24}\\
            b_{31} & b_{32} & 0 & b_{34}
        \end{pmatrix}.
\end{equation}
The latter algebraic boundary components are generated by maximal minors of a $4 \times 5$ matrix. The algebraic boundary components of $\mathcal{M}^{4 \times 4}_3$ can give algebraic boundary components of the semialgebraic set $\pi_E(\mathcal{M}^{4 \times 4}_3)$. For example, when $E= \left( [4] \times [4] \right) \backslash \{(1,1)\}$, then the zero pattern~\eqref{eq:boundary_components_zero_patterns}
gives an algebraic boundary component of $\pi_E(\mathcal{M}^{4 \times 4}_3)$ that is generated by the polynomial: 
\begin{equation*}
\begin{split}
&m_{14}m_{23}m_{32}m_{33}m_{41}m_{42}-m_{14}m_{22}m_{33}^2m_{41}m_{42}+
m_{12}m_{24}m_{33}^2m_{41}m_{42}-\\
&m_{13}m_{23}m_{32}m_{34}m_{41}m_{42}+m_{13}m_{22}m_{33}m_{34}m_{41}m_{42}-m_{12}m_{23}m_{33}m_{34}m_{41}m_{42}-\\
&m_{14}m_{23}m_{31}m_{33}m_{42}^2+m_{13}m_{23}m_{31}m_{34}m_{42}^2-m_{14}m_{23}m_{32}^2m_{41}m_{43}+\\
&m_{14}m_{22}m_{32}m_{33}m_{41}m_{43}-m_{12}m_{24}m_{32}m_{33}m_{41}m_{43}+m_{12}m_{23}m_{32}m_{34}m_{41}m_{43}+\\
&m_{14}m_{23}m_{31}m_{32}m_{42}m_{43}+m_{14}m_{22}m_{31}m_{33}m_{42}m_{43}-m_{12}m_{24}m_{31}m_{33}m_{42}m_{43}-\\
&m_{13}m_{22}m_{31}m_{34}m_{42}m_{43}-m_{14}m_{22}m_{31}m_{32}m_{43}^2+m_{12}m_{24}m_{31}m_{32}m_{43}^2+\\
&m_{13}m_{23}m_{32}^2m_{41}m_{44}-m_{13}m_{22}m_{32}m_{33}m_{41}m_{44}-m_{13}m_{23}m_{31}m_{32}m_{42}m_{44}+\\
&m_{12}m_{23}m_{31}m_{33}m_{42}m_{44}+m_{13}m_{22}m_{31}m_{32}m_{43}m_{44}-m_{12}m_{23}m_{31}m_{32}m_{43}m_{44}.
\end{split}
\end{equation*}
This polynomial is of degree six and has $24$ terms.
It vanishes on the matrix in~\Cref{example:non_trivial_boundary_4x4}.

Other zero patterns and projections that forget one entry can give boundary components of different degrees and number of terms. They can be easily computed using elimination in~\texttt{Macaulay2}. By contrast, it is not clear how to find a semialgebraic description of $\pi_E(\mathcal{M}^{4 \times 4}_3)$.
\end{example}

\begin{theorem}\label{thm:nonnegative_rank_completability_one_entry_missing}
Let $(i,j) \in [p] \times [q]$ and $E:= \left( [p] \times [q] \right) \backslash \{(i,j)\}$. 

\begin{enumerate}
\item A partial matrix $M_E \in \mathbb{R}^{E}$ is completable to a  matrix in $\mathbb{R}^{p \times q}$ of rank at most $r$ if and only if one of the following conditions holds:
\begin{enumerate}
\item $M_{\hat{i},:}$ or $M_{:,\hat{j}}$ has rank at most $r-1$;
\item $M_{\hat{i},:}$, $M_{:,\hat{j}}$, and $M_{\hat{i},\hat{j}}$ have rank exactly $r$.
\end{enumerate}
The condition (a) corresponds to matrices in the image of the singular locus of the projection map. 

\item A partial matrix $M_E \in \mathbb{R}^{E}$ has infinitely many completions in $\mathbb{R}^{p \times q}$ of rank at most $r$ if and only if $M_{\hat{i},:}$ or $M_{:,\hat{j}}$ has rank at most $r-1$ (which is the same as condition 1(a), and also the same as $M_E$ being in the image of the singular locus). Moreover, if there are infinitely many completions, then the missing entry can be completed to any real number.

\item A partial matrix $M_E \in \mathbb{R}^{E}$ has a unique completion in $\mathbb{R}^{p \times q}$ of rank at most $r$ if and only if $M_{\hat{i},:}$, $M_{:,\hat{j}}$, and $M_{\hat{i},\hat{j}}$ have rank exactly $r$ (which is the same as condition 1(b)). This is the same as the existence of an $r \times r$ minor of $M_{\hat{i},\hat{j}}$ that is nonzero.

\item If $r\geq 3$, to check whether a partial matrix $M_E \in \mathbb{R}^{E}$ has a completion of nonnegative rank at most $r$ cannot be determined by checking whether the matrix has a completion of rank at most $r$ with nonnegative entries. 
\end{enumerate}
\end{theorem}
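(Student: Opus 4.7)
The plan is to dispatch the four parts in order, using the submatrix rank lemmas from \Cref{sec:rank_r_completion} together with the singular-locus description in \Cref{lemma:singular_locus_one_missing_entry}.

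For Part 1, I would argue the forward direction directly: if $M$ is a rank-$\leq r$ completion, then all of $M_{\hat i,:}, M_{:,\hat j}, M_{\hat i,\hat j}$ have rank at most $r$, and if neither $M_{\hat i,:}$ nor $M_{:,\hat j}$ has rank $\leq r-1$, then the contrapositive of \Cref{lemma:removing-one-row-and-one-column} forces $\rank(M_{\hat i,\hat j}) = r$, giving case (b). For the converse, case (a) reduces to the standard fact that appending one row (or column) to a matrix increases the rank by at most one, so any real value assigned to $m_{ij}$ yields a rank-$\leq r$ completion. In case (b), the rows of $M_{\hat i,:}$ span an $r$-dimensional subspace, so any rank-$\leq r$ completion must express the $i$-th row as a linear combination of these rows; since $\rank(M_{\hat i,\hat j}) = r$, one can pick $r$ linearly independent columns inside $\hat j$, and the corresponding entries of $M_{i,\hat j}$ uniquely determine the coefficients of the combination, hence the value of $m_{ij}$. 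The identification of condition (a) with the image of the singular locus is exactly the content of \Cref{lemma:singular_locus_one_missing_entry}.

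Parts 2 and 3 then follow jointly from the Part 1 analysis: in case (a) the missing entry is a free real parameter, producing infinitely many completions, while in case (b) the argument above produces a unique completion. Since (a) and (b) are the only completable scenarios, one immediately obtains the stated dichotomy. The equivalence of (b) with ``$M_{\hat i,\hat j}$ has a nonzero $r \times r$ minor'' follows from rank monotonicity: a nonzero $r \times r$ minor forces $\rank(M_{\hat i,\hat j}) \geq r$, and under the standing completability assumption all three submatrices sit inside a rank-$\leq r$ matrix, so their ranks are at most $r$, forcing equality simultaneously for $M_{\hat i,\hat j}$, $M_{\hat i,:}$, and $M_{:,\hat j}$.

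Part 4 follows immediately from \Cref{example:one_missing_entry_no_nonnegative_rank_three_completion}: the partial matrix $M(\varepsilon)_E$ displayed there is nonnegative, has a unique rank-$3$ completion $M(\varepsilon)$ which is itself nonnegative, yet admits no nonnegative rank-$3$ completion. Hence the test ``is there a rank-$r$ completion that happens to be nonnegative?'' does not coincide with the test ``is there a nonnegative rank-$r$ completion?'' when $r \geq 3$. The main subtle step I anticipate is the backward direction of Part 1 case (b): one must check that the linear combination representing $M_{i,\hat j}$ in terms of the rows of $M_{\hat i,\hat j}$ is truly unique and extends to $m_{ij}$ in a well-defined way. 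The condition $\rank(M_{\hat i,\hat j}) = r$ is exactly what makes the underlying linear system nondegenerate, so no additional work beyond bookkeeping is required.
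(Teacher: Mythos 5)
Your proposal is correct and follows essentially the same route as the paper: the forward direction of Part~1 via the contrapositive of \Cref{lemma:removing-one-row-and-one-column}, case (a) via the rank-increases-by-at-most-one fact, case (b) by expressing row $i$ as a combination of the rows of $M_{\hat i,\hat j}$ and extending it to determine $m_{ij}$ (the paper phrases this same computation as $m_{ij}=f/\det M_{K,L}$ via Laplace expansion of an $(r+1)\times(r+1)$ minor), and Part~4 from the explicit $4\times 4$ example. One small bookkeeping slip: to pin down the coefficients of the row combination you should restrict to $r$ linearly independent \emph{rows} of $M_{\hat i,\hat j}$ (and then an invertible $r\times r$ submatrix), not just $r$ independent columns; this does not affect the validity of the argument.
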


\begin{proof}
\begin{enumerate}
    \item First assume that $M_E \in \mathbb{R}^{E}$ is completable to a  matrix of rank at most $r$. If neither $M_{\hat{i},:}$ nor $M_{:,\hat{j}}$ has rank at most $r-1$, then $M_{\hat{i},:}$ and $M_{:,\hat{j}}$ have rank exactly $r$. If $M_{\hat{i},\hat{j}}$ would have rank at most $r-1$, then by~\Cref{lemma:removing-one-row-and-one-column}, one of $M_{\hat{i},:}$ and $M_{:,\hat{j}}$ needs to have rank at most $r-1$ as well. Since this is not possible, then $M_{\hat{i},\hat{j}}$ has rank exactly $r$.

    Conversely, if $M_{\hat{i},:}$ or $M_{:,\hat{j}}$ has rank at most $r-1$, then adding any row or column increases rank by at most one, so $M_E$ is completable to a matrix of rank at most $r$. If $M_{\hat{i},:}$, $M_{:,\hat{j}}$, and $M_{\hat{i},\hat{j}}$ have rank exactly $r$, then we can write $M_{i,\hat{j}}$ as a linear combination of the rows of $M_{\hat{i},\hat{j}}$. If we take $M_{i,j}$ to be equal to the same linear combination of the elements of $M_{\hat{i},:}$, then $M_{i,:}$ is a linear combination of the rows of $M_{\hat{i},:}$ and thus the rank of $M$ is $r$.

    \item If $M_{\hat{i},:}$ has rank at most $r-1$, then regardless of what the $ij$th entry is completed, the completion can have rank at most $r$. Therefore, in this case there are infinitely many real and infinitely many nonnegative completions.  
    
    On the other hand, if $M_{\hat{i},:}$, $M_{:,\hat{j}}$, and $M_{\hat{i},\hat{j}}$ have rank exactly $r$, then there is an $r \times r$ minor of $M_{\hat{i},\hat{j}}$ that is nonzero. 
    Let $K \subseteq [p]\backslash \{i\}$ and $L \subseteq [q] \backslash \{j\}$ with $|K|=|L|=r$ such that $\det M_{K,L} \neq 0$. 
    Consider the minor $\det (M_{K \cup \{i\},L \cup \{j\}})$. 
    Using Laplace expansion, we can write the determinant $\det(M)=m_{ij}\det (M_{K,L})-f$, where $f$ does not depend on the entry $m_{ij}$. 
    Then the missing entry needs to be equal to $m_{ij}=\frac{f}{\det M_{K,L}}$, and hence there is a unique completion.

    \item This follows from the previous item. 

    \item This follows from \Cref{example:non_trivial_boundary_4x4}.
\end{enumerate}
\end{proof}

The unique completion in the item (3) is nonnegative if and only if the expression $m_{ij}=\frac{f}{\det M_{K,L}}$ in the item (2) is nonnegative. As having a nonnegative rank-2 completion is the same as having a rank-2 completion that is nonnegative, then in this case, one can easily check whether a partial matrix with one missing entry has a nonnegative rank-2 completion.

\section{Nonnegative rank-3 completion} \label{sec:nonnegative_rank_three_completion}

In this section, we give a geometric characterization of nonnegative rank-$r$ matrix completion (\Cref{sec:geometric_description_of_nonnegative_rank_completion}). We also study nonnegative rank-$3$ matrix completion with two missing entries in one column or row (\Cref{sec:missing_entries_11_and_21}) or in different columns and rows (\Cref{sec:missing_entries_11_and_22}).

\subsection{Geometric description of nonnegative rank completion} \label{sec:geometric_description_of_nonnegative_rank_completion}

In this subsection, we extend the geometric characterization of nonnegative rank in~\Cref{sec:background} to a geometric characterization of nonnegative rank-$r$ completions. The main result is stated in~\Cref{thm:existence_of_nonnegative_rank-r_completion}. 

In~\Cref{sec:background}, we associated a pair of nested polytopes to a nonnegative matrix. The matrix has nonnegative rank $r$ if there exists an $(r-1)$-simplex nested between the polytopes. To a partial matrix with nonnegative entries, we can associate a family of nested polytopes. The matrix has a nonnegative rank-$r$ completion if, for one pair of nested polytopes in this family, there exists an $(r-1)$-simplex that can be nested between the polytopes. 

Let $M_E \in \mathbb{R}^{E}_{\geq 0}$ be a partial nonnegative matrix. We will construct matrices $A \in \mathbb{R}^{p \times r}$ and $B \in \mathbb{R}^{r \times q}$ such that $M=AB$ is a rank-$r$ completion of $M_E$. We will use information from the partial matrix $M_E$ to determine the entries of $A,B$. We do not require $A$ and $B$ to have nonnegative entries. Let $M_{I,J}$ be a maximal observed submatrix of $M_E$ for some $I \subseteq [p]$ and $J \subseteq [q]$. If $\rank(M_{I,J}) = r$, then we can take $A_I$ and $B_J$ such that they give a rank-$r$ factorization of $M_{I,J}$. In particular, this fixes facets of $Q$ corresponding to the rows in $I$ and vertices of $P$ corresponding to the columns in $J$. The remaining entries of $A$ and $B$ are not fully specified but have restrictions imposed by the observed entries of $M_E$ not in $M_{I,J}$. Consequently, the  vertices of $P$ and facets of $Q$ can move as well, but there are restrictions on their movement imposed by the observed entries of $M_E$ not in $M_{I,J}$.
The configurations for the polytopes $P$ and $Q$ such that $P \subseteq Q$ correspond to the nonnegative completions of $M_E$.
This is because whenever $P \subseteq Q$ the completion of $M_E$ is the slack matrix $S_{P,Q}$ of the pair $(P,Q)$ which is nonnegative.

In some contexts, it is easier to work with nested polytopes, and in other contexts, with nested cones that are obtained from the polytopes by taking the conic hull. We will use both descriptions depending on the context. 
In the following example we demonstrate the geometric construction using both nested cones and polytopes.

\begin{example} \label{example:11_and_21_missing}
    Let $E= [4] \times [4] \setminus \{(1,1),(2,1)\}$ and consider the rank-$3$ completions of the partial nonnegative matrix 
    \begin{equation*}
         M_E = \begin{pmatrix}
            ? & m_{12} & m_{13} & m_{14} \\
            ? & m_{22} & m_{23} & m_{24} \\
            m_{31} & m_{32} & m_{33} & m_{34} \\
            m_{41} & m_{42} & m_{43} & m_{44} \\
        \end{pmatrix}.
    \end{equation*}
    First assume that the maximal observed submatrix $M_{:,234}$ has rank three. This implies that the second to the fourth columns of $M_E$ and thus the corresponding column sums are nonzero. We also assume that the sum of the observed entries in the first column is nonzero, because otherwise we can complete the missing entries to zero.
    Finally, we assume that $M_{34,234}$ has rank two. We consider the cases when the rank of $M_{34,234}$ is at most one or the rank of $M_{:,234}$ is at most two in~\Cref{ex:11_21_special_case}.

    We construct factors $A$ and $B$ such that $AB$ is a rank-$3$ completion of $M_E$.
    Define $A = M_{:,234}$ and 
    \begin{equation*}
        B_t = \begin{pmatrix}
            \vert & & \\
            b_1 & \operatorname{Id}_3 \\
            \vert
        \end{pmatrix} \in \mathbb{R}^{3 \times 4},
    \end{equation*}
    where $b_1 = (b_{11}, b_{21}, b_{31})^T \in \mathbb{R}^3$ is to be determined.
The restrictions on the column $b_1 \in \mathbb{R}^3$ are determined by the two observed entries, $m_{31}$ and $m_{41}$,  from the system of equations
    \begin{equation*}
        \begin{cases}
            a_3 b_1=m_{31},\\
            a_4 b_1=m_{41},
        \end{cases}
    \end{equation*}
    where $a_3$ and $a_4$ are the third and fourth rows of $A$, respectively. 

    If the $2 \times 2$ minor  $\det(M_{34,34})$ is nonzero, then
        \begin{align*}
        b_{11} &= t,\\
        b_{21} &= \frac{t \cdot \det(M_{34,24})-\det(M_{34,14})}{-\det(M_{34,34})},\\
        b_{31} & = \frac{t \cdot \det(M_{34,23})-\det(M_{34,13})}{\det(M_{34,34})},
    \end{align*}
    where $t \in \mathbb{R}$.
    Thus, in this case the first column of $B_t$ is of the form $(t,b_{21}(t),b_{31}(t))^T$ where $t$ is a parameter in $\mathbb{R}$.
    If $\det (M_{34,34}) = 0$, we may take the component of $b_1$ associated with another, nonvanishing $2 \times 2$ minor of $M_{34,234}$ as a parameter and solve the other two in terms of it.

    We will modify this factorization so that it will be easier to construct associated nested polytopes. 
    Denote by $s_i$ the $i$th column sum of the rank-$3$ completion $AB_t$ of $M_E$. We define
        \begin{equation*}
            \begin{split}
            &\hat A = A \begin{pmatrix}
             s_2& s_3& s_4 \\
             0 & 1 & 0 \\
            0 & 0 & 1 \\
        \end{pmatrix}^{-1} = \begin{pmatrix}
            \frac{m_{12}}{s_2} & m_{13} -\frac{m_{12}{s_3}}{s_2} & m_{14} -\frac{m_{12}{s_4}}{s_2}\\
            \frac{m_{22}}{s_2} &m_{23} -\frac{m_{22}{s_3}}{s_2} & m_{24} -\frac{m_{22}{s_4}}{s_2}\\
            \frac{m_{32}}{s_2} & m_{33} -\frac{m_{32}{s_3}}{s_2}& m_{34} -\frac{m_{32}{s_4}}{s_2}\\
            \frac{m_{42}}{s_2} & m_{43} -\frac{m_{42}{s_3}}{s_2}& m_{44} -\frac{m_{42}{s_4}}{s_2}\\
        \end{pmatrix} \ \text{ and }\\ 
        &\hat B_t = \begin{pmatrix}
             s_2& s_3& s_4 \\
             0 & 1 & 0 \\
            0 & 0 & 1 \\
        \end{pmatrix}B_t = \begin{pmatrix}
            t s_2 + b_{21}(t) s_3 + b_{31}(t) s_4 & s_2 & s_3 & s_4 \\
            b_{21}(t) & 0 & 1 & 0 \\
            b_{31}(t) & 0 & 0 & 1
        \end{pmatrix}.
            \end{split}
        \end{equation*}
    The geometry of the rank-$3$ completion of $M_E$ can be formulated via the nested cones $\hat P_t \subseteq \hat Q$, where $\hat P_t = \operatorname{cone}(\hat B_t)$ and $\hat Q = \{x \in \mathbb{R}^3 \mid \hat Ax \geq 0\}$.
    All four facets of $\hat Q$ are fixed, but only three rays of $\hat P_t$ are fixed. The fourth ray spanned by $(t s_2 + b_{21}(t) s_3 + b_{31}(t) s_4,b_{21}(t),b_{31}(t))^T$ depends on the parameter~$t$.

    We associate polytopes $P_t$ and $Q$ to $M_E$ by taking the affine slice $x=1$ of the cones $\hat P_t$ and $\hat Q$. 
    Then $P_t=\operatorname{conv}(p_1,p_2,p_3,p_4)$, with 
    \begin{align*}
        p_1 = \left( \frac{b_{21}(t)}{t s_2 + b_{21}(t) s_3 + b_{31}(t) s_4 }, \frac{b_{31}(t)}{t s_2 + b_{21}(t) s_3 + b_{31}(t) s_4 } \right), \ p_2 = \left( 0,0 \right), \ p_3 = \left( \frac{1}{s_3}, 0 \right), \ p_4 = \left( 0, \frac{1}{s_4} \right),
    \end{align*}
    and $Q = \{(1,x,y)  \mid \hat A(1,x,y) \geq 0 \}$. Denominators of $p_i$ are nonzero by the assumption that the sum of the observed entries in any column of $M_E$ is nonzero and the observation that $t s_2 + b_{21}(t) s_3 + b_{31}(t) s_4 = \frac{1}{\det (M_{34,34})}s_1(t)$, where $s_1(t)$ is the sum of the entries in the first column of the matrix $A B_t$.  
    For values of $t$ such that the completion is nonnegative we have $s_1(t) > 0$.
\end{example}

\begin{theorem}\label{thm:existence_of_nonnegative_rank-r_completion}
    Let $M_E \in \mathbb{R}^E_{\geq 0}$ be a partial nonnegative matrix.
    Then $M_E$ has a nonnegative rank-$r$ completion if and only if there exists a member $(\hat P,\hat Q)$ of the family of nested cones associated to $M_E$ such that there exists an $r$-simplicial cone $\hat \Delta$ satisfying $\hat P \subseteq \hat \Delta \subseteq \hat Q$. 
\end{theorem}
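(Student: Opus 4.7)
The plan is to reduce the statement to the Cohen--Rothblum/Vavasis characterization of nonnegative rank recalled in~\Cref{sec:geometric_description}, applied uniformly across the family of rank-$r$ completions of $M_E$. By construction, each member $(\hat P,\hat Q)$ of the family is of the form $\hat P=\operatorname{cone}(B)$ and $\hat Q=\{x\in\mathbb R^r\mid Ax\geq 0\}$ for a rank-$r$ factorization $M=AB$ of some (not necessarily nonnegative) rank-$r$ completion $M$ of $M_E$; conversely every such factorization arises this way. So the equivalence ``$M_E$ has a nonnegative rank-$r$ completion'' $\Leftrightarrow$ ``some rank-$r$ completion of $M_E$ has a size-$r$ nonnegative factorization'' turns the theorem into a completion-wise application of the nested-cone criterion.

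For the forward direction, assume $M_E$ has a nonnegative rank-$r$ completion $M$. Pick any rank-$r$ factorization $M=AB$; this determines a member $(\hat P,\hat Q)$ of the family. Let $M=A'B'$ with $A'\geq 0$, $B'\geq 0$ be a size-$r$ nonnegative factorization. Assuming $\rank(M)=r$, there is an invertible $C\in\mathbb R^{r\times r}$ with $A'=AC$ and $B'=C^{-1}B$; taking $\hat\Delta$ to be the simplicial cone spanned by the columns of $C$, the inequality $AC\geq 0$ gives $\hat\Delta\subseteq\hat Q$ while $C^{-1}B\geq 0$ gives $\hat P\subseteq\hat\Delta$. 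The degenerate case $\rank(M)<r$ is handled by padding a factorization of minimal size to rank $r$ with appropriately chosen auxiliary columns and rows, so that the same argument applies. For the backward direction, suppose $(\hat P,\hat Q)$ is a member of the family with $\hat P\subseteq\hat\Delta\subseteq\hat Q$ for a simplicial cone $\hat\Delta$, corresponding to a rank-$r$ completion $M=AB$ of $M_E$. Let $C\in\mathbb R^{r\times r}$ have columns spanning the rays of $\hat\Delta$; then $\hat\Delta\subseteq\hat Q$ translates to $AC\geq 0$ and $\hat P\subseteq\hat\Delta$ translates to $C^{-1}B\geq 0$. Hence $M=(AC)(C^{-1}B)$ is a size-$r$ nonnegative factorization and $M$ itself is nonnegative, so $M$ is a nonnegative rank-$r$ completion of $M_E$.

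The main obstacle is a careful bookkeeping of what the ``family of nested cones associated to $M_E$'' means, in a way that both (a) guarantees that every rank-$r$ completion is represented by at least one pair $(\hat P,\hat Q)$ in the family, so that the forward direction is not vacuous, and (b) guarantees that every pair $(\hat P,\hat Q)$ in the family really does correspond to a rank-$r$ completion of $M_E$, so that the backward direction actually produces a completion. The normalization illustrated in~\Cref{example:11_and_21_missing}, where a maximal rank-$r$ observed submatrix $M_{I,J}$ is used to fix facets of $\hat Q$ indexed by $I$ and vertices of $\hat P$ indexed by $J$ and the remaining degrees of freedom are parametrized by the unobserved entries, is what makes the correspondence precise; the general construction should be spelled out along the same lines. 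Once this parametrization is in place, the proof is essentially a fiberwise application of the Vavasis nested-cone criterion, with the only real content being that $GL_r$ changes of basis on a factorization do not change the containment $\hat P\subseteq\hat\Delta\subseteq\hat Q$ up to relabelling $\hat\Delta$.
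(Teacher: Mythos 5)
Your proposal is correct and follows essentially the same route as the paper: both identify the members of the family of cones with rank-$r$ completions of $M_E$ (nonnegative precisely when $\hat P \subseteq \hat Q$) and then apply the Vavasis nested-simplicial-cone criterion completion-by-completion, with the $\operatorname{GL}_r$ change-of-basis argument you spell out being exactly the background recalled in~\Cref{sec:geometric_description}. The bookkeeping issue you flag about how the family is defined in general is real, but the paper likewise leaves that construction at the level of~\Cref{example:11_and_21_missing} and gives only a two-sentence proof, so your more explicit version is if anything more complete.
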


\begin{proof}
    Each member $(\hat P, \hat Q)$ of the family of cones associated to $M_E$ such that $\hat P \subseteq \hat Q$ corresponds to a distinct rank-$r$ completion which is nonnegative.
    There exists a member $(\hat{P},\hat{Q})$ in the family of nested cones such that there is an $r$-simplicial cone $\hat \Delta$ with $\hat{P} \subseteq \hat \Delta \subseteq \hat{Q}$ if and only if the corresponding completion has nonnegative rank $r$. 
\end{proof}

Finally, we consider a special case that was left open in~\Cref{example:11_and_21_missing}. For this we use the following lemma instead of a geometric characterization.

\begin{lemma}\label{lem:nonnegative_rank_of_submatrix_less_than_or_equal_k}
Let $M \in  \mathbb{R}^{p \times q}_{\geq 0}$. If $I \subset [p]$ such that $M_{I,:}$ has nonnegative rank at most $k$ and $p-|I| \leq r-k$, then $M$ has nonnegative rank at most $r$. Similarly, if $J \subset [q]$ such that $M_{:,J}$ has nonnegative rank at most $k$ and $q-|J| \leq r-k$, then $M$ has nonnegative rank at most $r$.
\end{lemma}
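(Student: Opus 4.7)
The plan is to produce an explicit size-$r$ nonnegative factorization of $M$ by combining a size-$k$ nonnegative factorization of the submatrix $M_{I,:}$ with trivial rank-$1$ contributions for each of the remaining rows.

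First I would use the hypothesis $\rank_+(M_{I,:}) \leq k$ to write $M_{I,:} = A'B'$ with $A' \in \mathbb{R}_{\geq 0}^{|I| \times k}$ and $B' \in \mathbb{R}_{\geq 0}^{k \times q}$. Set $s := p - |I|$ and enumerate the remaining rows as $[p] \setminus I = \{i_1, \dots, i_s\}$; by assumption $k + s \leq r$.

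Next I would assemble $\tilde A \in \mathbb{R}_{\geq 0}^{p \times (k+s)}$ and $\tilde B \in \mathbb{R}_{\geq 0}^{(k+s) \times q}$ as follows. Let $\tilde B$ have $B'$ as its first $k$ rows and the rows $M_{i_1,:}, \dots, M_{i_s,:}$ of $M$ as its last $s$ rows, so $\tilde B \geq 0$ since $M$ and $B'$ are nonnegative. Let $\tilde A$ have in row $i \in I$ the vector $(A'_{i,:}\;|\;0,\dots,0)$, and in row $i_j$ (for $j = 1,\dots,s$) the standard basis vector $e_{k+j}^T$. Then a direct check shows that for $i \in I$ the $i$th row of $\tilde A \tilde B$ equals $A'_{i,:} B' = M_{i,:}$, while for $i = i_j$ it equals $M_{i_j,:}$; hence $\tilde A \tilde B = M$. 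This exhibits a size-$(k+s)$ nonnegative factorization, and padding by $r - (k+s)$ zero columns of $\tilde A$ and zero rows of $\tilde B$ produces a size-$r$ nonnegative factorization, proving $\rank_+(M) \leq r$.

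For the analogous statement with $J \subset [q]$, I would simply apply the row version to $M^T$, using that $\rank_+(M^T) = \rank_+(M)$ and $(M^T)_{J,:} = (M_{:,J})^T$ has the same nonnegative rank as $M_{:,J}$. There is no serious obstacle here; the argument is purely a bookkeeping construction, and the only subtlety is to order the indices of $[p] \setminus I$ consistently so that the identity-block portion of $\tilde A$ picks out exactly the corresponding unfactored rows of $\tilde B$.
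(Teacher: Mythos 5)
Your construction is exactly the block-matrix factorization used in the paper: the paper writes $M_{I,:}=AB$ and forms $\tilde A=\left(\begin{smallmatrix} A & 0\\ 0 & \operatorname{Id}_{p-|I|}\end{smallmatrix}\right)$, $\tilde B=\left(\begin{smallmatrix} B\\ M_{[p]\backslash I,:}\end{smallmatrix}\right)$, which is your $(\tilde A,\tilde B)$ after the paper's harmless "without loss of generality $I=\{1,\dots,|I|\}$" reordering. The proof is correct and essentially identical, with your zero-padding and transpose remarks just making explicit what the paper leaves implicit.
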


\begin{proof}
We prove the lemma for the case when $I \subset [p]$ such that $M_{I,:}$ has nonnegative rank at most $k$ and $p-|I| \leq r-k$. The proof for the second statement is analogous. Without loss of generality assume that $I=\{1,2,\ldots,|I|\}$. Since $M_{I,:}$ has nonnegative rank at most $k$, we can write $M_{I,:}=AB$, where $A \in \mathbb{R}^{|I| \times k}_{\geq 0}$ and $B \in \mathbb{R}^{k \times q}_{\geq 0}$. We define
\[
\tilde{A} = 
\left( 
\begin{array}{c|c} 
  A & 0 \\ 
  \hline 
  0 & \operatorname{Id}_{p-|I|} 
\end{array} 
\right)
\quad \text{and} \quad
\tilde{B} = 
\left( 
\begin{array}{c} 
  B \\ 
  \hline 
  M_{[p]\backslash I,:}
\end{array} 
\right),
\]
where $\operatorname{Id}_{p-|I|} $ is the $(p-|I|) \times (p-|I|)$ identity matrix.
Then $(\tilde{A},\tilde{B})$ gives a size at most $r$ nonnegative matrix factorization of $M$.
\end{proof}

\begin{example}\label{ex:11_21_special_case}
    Let $E = ([4] \times [4]) \setminus \{(1,1),(2,1) \}$ and consider the partial nonnegative matrices 
     \begin{equation*}
         M_E = \begin{pmatrix}
            ? & m_{12} & m_{13} & m_{14} \\
            ? & m_{22} & m_{23} & m_{24} \\
            m_{31} & m_{32} & m_{33} & m_{34} \\
            m_{41} & m_{42} & m_{43} & m_{44} \\
        \end{pmatrix}.
    \end{equation*}
    Here we consider the cases when $\rank (M_{:,234}) \leq 2$ or $\rank(M_{34,234}) \leq 1$ and characterize when such matrices have nonnegative rank at most three completions.
    If the maximal observed submatrix $M_{:,234}$ has rank at most two, then $M_E$ has a nonnegative rank at most three completion by \Cref{lem:nonnegative_rank_of_submatrix_less_than_or_equal_k}.

    Suppose then that the submatrix $M_{34,234}$ has rank at most one. 
    If $M_{34,:}$ has rank two and $M_{:,234}$ has rank three, then $M_E$ cannot completed to a rank-$3$ matrix.
    This can be proven using a similar argument to the one in the proof of \Cref{lemma:removing-one-row-and-one-column}.
    Therefore, we need to only consider cases when $\rank(M_{34,:})\leq 1$ or $\rank (M_{:,234}) \leq 2$.
    In both cases, the matrix $M_E$ can be completed to a nonnegative rank at most three matrix by \Cref{lem:nonnegative_rank_of_submatrix_less_than_or_equal_k}.
    
\end{example}

\subsection{Entries \texorpdfstring{$(1,1)$ and $(2,1)$}{(1,1) and (2,1)} missing} \label{sec:missing_entries_11_and_21}

    In this subsection, we study the geometry of nonnegative rank-$3$ completions of partial nonnegative $4 \times 4$ matrices with the entries $(1,1)$ and $(2,1)$ missing.
    By~\Cref{lem:properties_of_geom_11_12}, rank-$3$ completions of $M_E$ correspond to a family of nested polytopes $P_t \subseteq Q$ in $\mathbb{R}^2$, where the outer polygon $Q$ and three vertices of the inner polygon $P_t$ are fixed and the last vertex of $P_t$ lies on a line that passes through one vertex of $Q$.
    In \Cref{lem:11_12_sufficient_condition} we give sufficient conditions to ensure that there exists a nonnegative rank-$3$ completion for such a partial matrix, and in \Cref{ex:square} we give an example of a partial nonnegative matrix with a rank-$3$ completion that is nonnegative but with no nonnegative rank-$3$ completions.

\begin{lemma}\label{lem:properties_of_geom_11_12}
    Let $E = [4] \times [4] \setminus \{ (1,1), (2,1) \}$ and $M_E \in \mathbb{R}_{\geq 0}^E$ be a partial matrix with no zero rows or columns, 
    and assume that the $2 \times 2$ minor $\det(M_{34,34})$ does not vanish.  
    Let $P_t$ and $Q$ be defined as in \Cref{example:11_and_21_missing}.
    Then, the vertex $p_1$ of $P_t$ always lies on a line that goes through the vertex of $Q$ defined by the third and fourth defining facets of $Q$.
    The nonnegative rank-$3$ completions of $M_E$ correspond to those points $p_1$ on the line such that there exists a triangle $\Delta$ such that $P_t \subseteq \Delta \subseteq Q$.
\end{lemma}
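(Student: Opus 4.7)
The plan is to study the family of rank-$3$ completions constructed in~\Cref{example:11_and_21_missing} by tracking the single movable ray $\hat b_1(t)$ inside $\mathbb{R}^3$, and then to transport the conclusion to the affine slice $\{x=1\}$. Each of the three components of $\hat b_1(t) = \bigl(s_1(t)/\det(M_{34,34}),\ b_{21}(t),\ b_{31}(t)\bigr)^T$ is affine in $t$, so the image of $t \mapsto \hat b_1(t)$ is an affine line $L \subset \mathbb{R}^3$. The identities $\hat a_i \cdot \hat b_1(t) = a_i \cdot b_1(t) = m_{i1}$ for $i \in \{3,4\}$ (inherited from $\hat A \hat B_t = A B_t$), combined with the no-zero-column hypothesis and $\det(M_{34,34})\neq 0$, force $\hat b_1(t) \neq 0$ for every $t$, so $L$ misses the origin and the union of rays $\bigcup_t \mathbb{R}_{\geq 0}\hat b_1(t)$ lies in a unique linear $2$-plane $\Pi \subset \mathbb{R}^3$. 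Intersecting with $\{x=1\}$ then gives an affine line $\ell$ containing every $p_1(t)$.

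To see that $\ell$ passes through the vertex $v_{34}$ of $Q$ cut out by the third and fourth facets, I will compute a normal vector $n$ to $\Pi$ and check that $n \cdot (1,v_{34})^T = 0$. Differentiating $\hat a_3 \cdot \hat b_1(t) = m_{31}$ and $\hat a_4 \cdot \hat b_1(t) = m_{41}$ in $t$ shows that the direction of $L$ is orthogonal to both $\hat a_3$ and $\hat a_4$, hence is a nonzero multiple of $\hat a_3 \times \hat a_4$. The vector triple product identity then produces
\[
n \;=\; \hat b_1(0) \times (\hat a_3 \times \hat a_4) \;=\; (\hat b_1(0)\cdot \hat a_4)\,\hat a_3 \;-\; (\hat b_1(0)\cdot \hat a_3)\,\hat a_4 \;=\; m_{41}\,\hat a_3 \;-\; m_{31}\,\hat a_4.
\]
Since $v_{34}$ lies on both the third and fourth facets of $Q$, we have $\hat a_3 \cdot (1,v_{34})^T = \hat a_4 \cdot (1,v_{34})^T = 0$, and so $n \cdot (1,v_{34})^T = 0$: the ray through $(1,v_{34})$ lies in $\Pi$, so $v_{34} \in \ell$. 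The assumption $\det(M_{34,34}) \neq 0$ is used precisely here, to make $\hat a_3,\hat a_4$ linearly independent so that $v_{34}$ is a well-defined vertex and $\hat a_3 \times \hat a_4 \neq 0$.

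The second assertion is then a direct specialization of the nested-cones characterization in~\Cref{thm:existence_of_nonnegative_rank-r_completion}: each value of $t$ yields a distinct rank-$3$ completion $A B_t$ of $M_E$, corresponding to a unique $p_1(t) \in \ell$, and $AB_t$ has nonnegative rank at most $3$ precisely when there exists a triangle $\Delta$ with $P_t \subseteq \Delta \subseteq Q$. I expect the main obstacle to be organizing the triple product computation and justifying the nonvanishing statements cleanly -- in particular, tracking that the constancy of $\hat a_i \cdot \hat b_1(t)$ in $t$ really is a consequence of $\hat A \hat B_t = A B_t$ rather than of $\hat B_t$ itself; once that is in place, the second claim is essentially a translation of~\Cref{thm:existence_of_nonnegative_rank-r_completion} into the parameter $t$.
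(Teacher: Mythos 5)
Your proof is correct, but it reaches the first claim by a genuinely different route than the paper. The paper's proof is computational: it exhibits, "by elimination," the explicit affine equation $l_{p_1}=-\det(M_{34,12})+c_xx+c_yy$ satisfied by $p_1(t)$ for all $t$, and then checks directly that $l_{p_1}(q_1)=0$. You instead argue synthetically: the moving ray generator $\hat b_1(t)$ traces an affine line $L$ whose direction is annihilated by $\hat a_3$ and $\hat a_4$ (because $\hat a_i\cdot\hat b_1(t)=m_{i1}$ is constant in $t$), so $L$ spans a linear $2$-plane $\Pi$ with normal $n=\hat b_1(0)\times(\hat a_3\times\hat a_4)=m_{41}\hat a_3-m_{31}\hat a_4$, and this $n$ visibly kills the ray over $q_1$. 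This explains \emph{why} the line must pass through the vertex cut out by the two facets whose constraints govern the moving ray, which the paper's elimination does not illuminate; in fact your $n$ gives the line of the paper up to scaling, and one can even bypass the cross product by noting directly that $n\cdot\hat b_1(t)=m_{41}m_{31}-m_{31}m_{41}=0$. What the paper's version buys is the explicit equation \eqref{eq:p1_line}, which is reused verbatim in \Cref{lem:11_12_sufficient_condition} and \Cref{ex:square}. Two small points you should tighten: (i) you need $\hat b_1'\neq 0$ for $L$ to be a line rather than a point, which follows from the no-zero-column hypothesis since if $b_{21}'=b_{31}'=0$ the first component of $\hat b_1'$ is $s_2>0$; and (ii) linear independence of $\hat a_3,\hat a_4$ gives that the two facet hyperplanes meet in a line through the origin, but for $v_{34}$ to be a finite point of the slice $\{x=1\}$ that line must not lie in $\{x=0\}$ --- a degeneracy the lemma statement itself (and the paper's proof) also implicitly excludes, so this is a shared rather than a new gap. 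The second claim is handled identically in both proofs via \Cref{thm:existence_of_nonnegative_rank-r_completion}.
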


\begin{proof}
    The vertex $p_1$ of $P_t$ is of the form $\left( \frac{b_{21}(t)}{t s_2 + b_{21}(t) s_3 + b_{31}(t) s_4 }, \frac{b_{31}(t)}{t s_2 + b_{21}(t) s_3 + b_{31}(t) s_4 } \right)$.
    One can check by elimination, that for every $t \in \mathbb{R}$, the vertex $p_1$ lies on the line
    \begin{equation}\label{eq:p1_line}
        l_{p_1} =-\det(M_{34,12}) + c_x x + c_y y,
    \end{equation}
    where
    \begin{align*}
        c_x &= m_{41}
        (\det(M_{13,23}) + \det(M_{23,23})  - 
       \det(M_{34,23}))-m_{31}(\det(M_{14,23}) + 
       \det(M_{24,23}) + \det(M_{34,23})), \\
       c_y &= m_{41}(\det(M_{13,24}) + \det(M_{23,24})  - 
       \det(M_{34,24}))-m_{31}(\det(M_{14,24}) + 
       \det(M_{24,24}) + \det(M_{34,24})).
    \end{align*}
    In particular, this line depends on the entries $m_{31}$ and $m_{41}$.

    Let $q_1$ be the intersection point of the third and fourth facet defining $Q$, that is, $\hat a_3^T (1,q_1)= \hat a_4^T (1,q_1) = 0$.
    The line $l_{p_1}$ always goes through the point $q_1$ as $l_{p_1}(q_1) = 0$. However, $p_1 = q_1$ only when $m_{31} = m_{41} = 0$.
    
    The points on $l_{p_1}$ inside $Q$ correspond to different rank-$3$ completions of $M_E$ which are nonnegative.
    Whenever there exists a triangle $\Delta$ such that $P_t \subseteq \Delta \subset Q$ the corresponding completion of $M_E$ has nonnegative rank three by~\Cref{thm:existence_of_nonnegative_rank-r_completion}.
\end{proof}

In the next lemma we give sufficient conditions for a partial matrix $M_E$ to have a nonnegative rank-$3$ completion.

\begin{lemma}\label{lem:11_12_sufficient_condition}
    Let $M_E, P_t$, and $Q$ be defined as above.
    Let $l_{p_1}$ be the line defined in \eqref{eq:p1_line} on which the last vertex of $P_t$ lies. 
    If $l_{p_1}$ intersects $\operatorname{conv}(p_2,p_3,p_4)$ or the lines containing the edges of $\operatorname{conv}(p_2,p_3,p_4)$ inside $Q$, then $M_E$ has a nonnegative rank-$3$ completion.
\end{lemma}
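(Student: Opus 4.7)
The plan is to apply the geometric characterization of \Cref{thm:existence_of_nonnegative_rank-r_completion} directly: I need to produce a value of $t \in \mathbb{R}$ and a triangle $\Delta$ such that $P_t \subseteq \Delta \subseteq Q$. The crucial observation is that the three fixed vertices $p_2, p_3, p_4$ all lie in the convex set $Q$, so the triangle $T := \operatorname{conv}(p_2, p_3, p_4)$ is contained in $Q$ automatically by convexity. The strategy is then to choose $t$ so that the moving vertex $p_1(t)$ causes $P_t$ itself to degenerate into a triangle whose vertices lie in $Q$.

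In the first case, where $l_{p_1}$ intersects $T$ at some point $s$, I appeal to the parametrization from \Cref{example:11_and_21_missing} (together with \Cref{lem:properties_of_geom_11_12}) to choose $t_0$ with $p_1(t_0) = s$. Since $p_1(t_0) \in \operatorname{conv}(p_2, p_3, p_4)$, the polytope $P_{t_0} = \operatorname{conv}(p_1(t_0), p_2, p_3, p_4)$ collapses to $T$, and $\Delta := T$ satisfies $P_{t_0} \subseteq \Delta \subseteq Q$. In the second case, $l_{p_1}$ meets the supporting line of some edge $[p_i, p_j]$ of $T$ at a point $s \in Q$ with $s \notin T$ (otherwise we are back in Case~1). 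I again select $t_0$ with $p_1(t_0) = s$; then the three points $p_1(t_0), p_i, p_j$ are collinear and, since $s$ lies outside the segment $[p_i, p_j]$, exactly one of them, call it $p_k \in \{p_i, p_j\}$, lies in the segment spanned by the other two and is redundant in the convex hull. Consequently $P_{t_0}$ reduces to the triangle $\Delta := \operatorname{conv}(\{p_1(t_0), p_2, p_3, p_4\} \setminus \{p_k\})$. Its three vertices all lie in $Q$ (the two coming from $\{p_2, p_3, p_4\}$ trivially, and $p_1(t_0) = s$ by hypothesis), so $\Delta \subseteq Q$ by convexity and $P_{t_0} = \Delta$ by construction.

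The main subtlety I anticipate is justifying that $p_1(t)$ actually attains the specific intersection point $s$. The parametrization in \Cref{example:11_and_21_missing} is a degree-one rational map from $\mathbb{R}$ onto $l_{p_1}$, so it covers all of $l_{p_1}$ except possibly one limit point (corresponding to a vanishing denominator or $t \to \infty$). When the hypothesis supplies an open set of admissible intersection points on $l_{p_1} \cap Q$ (for instance when $l_{p_1}$ crosses the interior of $T$), this single excluded value is harmless. In the degenerate isolated-intersection case where the omitted limit coincides with $s$, one can close the argument by a continuity/limiting argument, since the nesting condition $P_t \subseteq \Delta \subseteq Q$ is stable under small perturbations of $p_1$ along $l_{p_1}$ as long as $p_1$ stays inside $Q$.
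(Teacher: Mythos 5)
Your proof is correct and follows essentially the same route as the paper's: in the first case you place $p_1$ inside $\operatorname{conv}(p_2,p_3,p_4)$ so that $P_t$ collapses to that triangle, and in the second case you place $p_1$ on the supporting line of an edge so that one endpoint becomes redundant and $P_t$ is again a triangle contained in $Q$. The only difference is that you additionally flag (and reasonably address) the question of whether the rational parametrization $t \mapsto p_1(t)$ actually attains the desired point of $l_{p_1}$, a subtlety the paper's proof passes over silently.
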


\begin{proof}
    Suppose first that the line $l_{p_1}$ intersects $\operatorname{conv}(p_2,p_3,p_4)$.
    By \Cref{lem:properties_of_geom_11_12} each point of $l_{p_1}$ inside $Q$ corresponds to a rank-$3$ completion of $M_E$ which is nonnegative.
    Let $p_1 \in l_{p_1} \cap \operatorname{conv}(p_2,p_3,p_4)$. 
    Then, $P_t = \operatorname{conv}(p_1,p_2,p_3,p_4) = \operatorname{conv}(p_2,p_3,p_4)$, where $\operatorname{conv}(p_2,p_3,p_4)$ is a triangle.
    Hence, the completion of $M_E$ corresponding to this $p_1$ has nonnegative rank three.

    Suppose now that $l_{p_1}$ intersects a line which contains an edge of $\operatorname{conv}(p_2,p_3,p_4)$ at a point $p_1$ outside of $\operatorname{conv}(p_2,p_3,p_4)$.
    Without loss of generality, assume that the line contains the edge $\overline{p_2p_3}$ of $\operatorname{conv}(p_2,p_3,p_4)$.
    Then either $\overline{p_2 p_3} \subset \overline{p_1p_2}$ or $\overline{p_2 p_3} \subset \overline{p_1p_3}$. In the first case, $\operatorname{conv}(p_2,p_3,p_4) \subset \operatorname{conv}(p_1,p_2,p_4)$, and in the second case $\operatorname{conv}(p_2,p_3,p_4) \subset \operatorname{conv}(p_1,p_3,p_4)$. In both cases, the polytope $\operatorname{conv}(p_1,p_2,p_3,p_4)$ is a triangle.
    Hence, the completion of $M_E$ corresponding to this $p_1$ has nonnegative rank three.
\end{proof}

\begin{example}\label{ex:square}
    In this example we construct a partial nonnegative matrix which has a rank-$3$ completion that is nonnegative but no nonnegative rank-$3$ completions.
Consider partial matrices of the form
\begin{equation*}
    M_E = \begin{pmatrix}
        ? & 5 & 1 & 9\\
        ? & 1 &  7 & 7\\
        m_{31} & 5 & 9 & 1\\
        0 & 9 & 3 & 3
    \end{pmatrix}
\end{equation*}
where $m_{31} \geq 0$.
The rank-$3$ completions of $M_E$ are of the form 
\begin{equation*}
    M_{t} = \begin{pmatrix}
        t-m_{31} & 5 & 1 & 9\\
        t & 1 &  7 & 7\\
        m_{31} & 5 & 9 & 1\\
        0 & 9 & 3 & 3
    \end{pmatrix}.
\end{equation*}
The completion $M_t$ is nonnegative for $t \geq m_{31}$.

The polytopes corresponding to $M_t$ are $Q$ and $P_t$ constructed as in \Cref{example:11_and_21_missing}.
The outer polytope $Q$ is defined by the inequalities
\begin{align*}
    x -y + \frac{1}{16} \geq 0, \ 
    -x + y+ \frac{1}{16} \geq 0, \
    x + y + \frac{1}{120}\geq 0, \ \text{ and } \
    x + y -\frac{3}{40} \leq 0.
\end{align*}
Equivalently $Q = \operatorname{conv}(q_1,q_2,q_3,q_4)$ with
\begin{align*}
    q_1 = \left(\frac{1}{160},\frac{11}{160}\right), \ q_2 = \left( \frac{11}{160}, \frac{1}{160} \right)  , \ q_3 = \left(\frac{13}{480}, -\frac{17}{480} \right), \ \text{ and } \ q_4 = \left( -\frac{17}{480}, \frac{13}{480} \right).
\end{align*}
The inner polytope $P_t$ is defined as $P_t = \operatorname{conv}(p_1,p_2,p_3,p_4)$ where  
\begin{equation*}
        p_1=\left(\frac{10m_{31}+t}{160t}, \frac{11}{160} -\frac{m_{31}}{16t}\right), \ p_2 = \left(0, 0\right), \ p_3 = \left(\frac{1}{20},0\right), \text{ and } \ p_4 =\left(0,\frac{1}{20}\right).
\end{equation*}
We notice that the inner vertex $p_1$ is always on the edge $\overline{q_1q_2}$ of $Q$ which is defined by the equation $x + y -\frac{3}{40} = 0$.
Moreover, $p_1 = q_1$ only when $m_{31}=0$.
Otherwise, $p_1 \rightarrow q_1$ as $t \rightarrow \infty$ but $p_1 \neq q_1$.

We show that there does not exist a triangle $\Delta$ such that $P_t\subseteq \Delta \subseteq Q$.
Since $p_1$ is always on the edge $\overline{q_1q_2}$ any triangle nested between $P_t$ and $Q$ must have one vertex, namely $p_1$, or two vertices on this edge.

First consider the triangles  such that $p_1$ is the only point of the triangle on the edge $\overline{q_1q_2}$. 
Let $v_1, v_2,w$ be the intersection points of the boundary of $Q$ and the lines $\overline{p_1p_4}$, $\overline{v_1p_2}$, and $\overline{v_2p_3}$, respectively.
There is a triangle $\Delta$ nested between $P_t$ and $Q$ if and only if the lines $\overline{p_1v_1}$ and $\overline{v_2 p_3}$ intersect inside $Q$. To show that there is no such triangle, we study the movement of the point $w$ as $p_1$ moves along the edge $\overline{q_1 q_2}$.

\begin{figure}[htbp!] 
    \centering
    \begin{subfigure}[b]{0.45\linewidth}
        \centering
        \includegraphics[width=\linewidth]{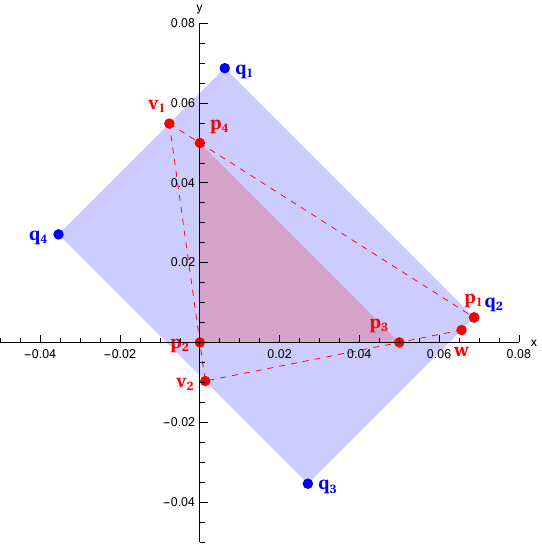} 
        \caption{Configuration 1}
        \label{subfig:conf1}
    \end{subfigure}
    \hfill 
    \begin{subfigure}[b]{0.45\linewidth}
        \centering
        \includegraphics[width=\linewidth]{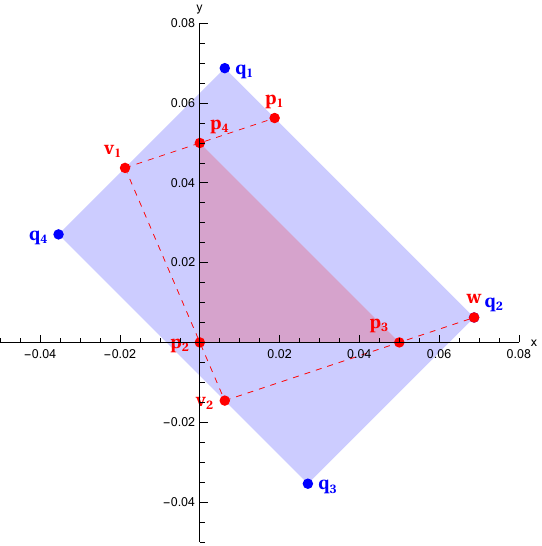}
        \caption{Configuration 2}
        \label{subfig:conf2}
    \end{subfigure}
    \par\vspace{0.5em} 
    \begin{subfigure}[b]{0.45\linewidth}
        \centering
        \includegraphics[width=\linewidth]{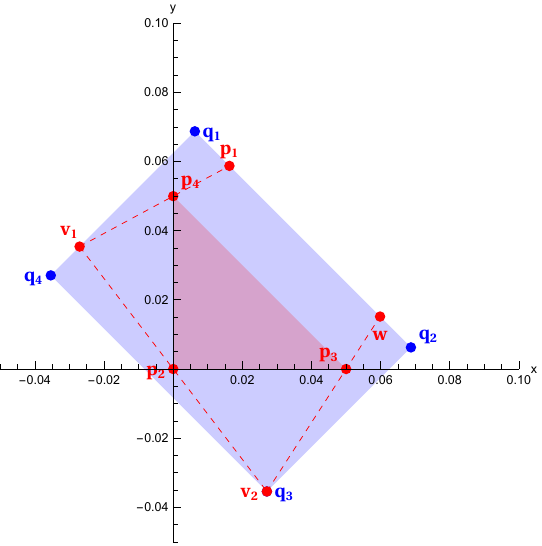}
        \caption{Configuration 3}
        \label{subfig:conf3}
    \end{subfigure}
    \hfill
    \begin{subfigure}[b]{0.45\linewidth}
        \centering
        \includegraphics[width=\linewidth]{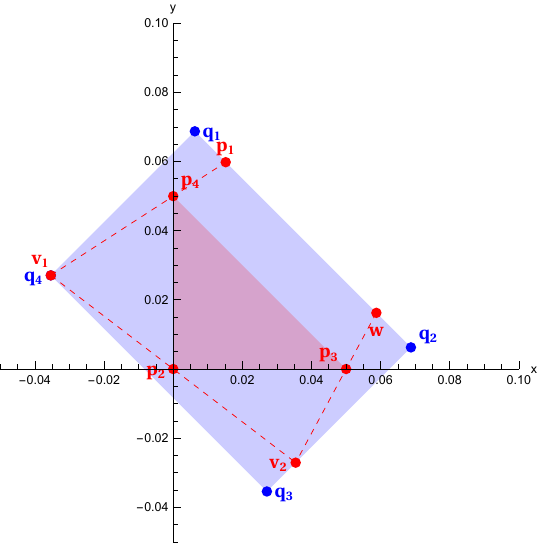}
        \caption{Configuration 4}
        \label{subfig:conf4}
    \end{subfigure}
    \begin{subfigure}[b]{0.45\linewidth}
        \centering
        \includegraphics[width=\linewidth]{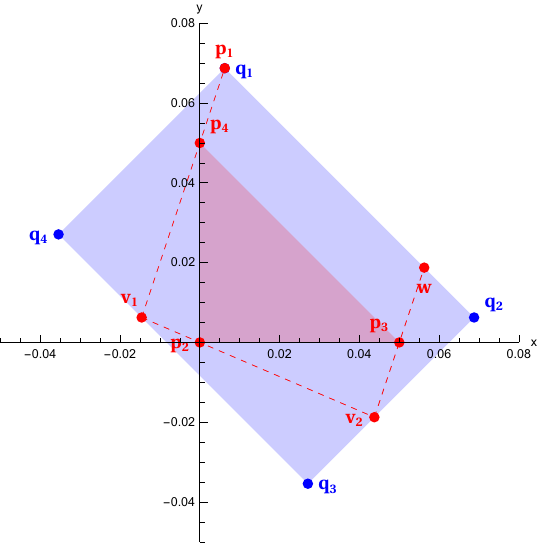}
        \caption{Configuration 5}
        \label{subfig:conf5}
    \end{subfigure}
    \caption{Critical configurations in \Cref{ex:square}.} 
    \label{fig:square_critical}
\end{figure}

As long as each of $v_1$, $v_2$, and $w$ moves along a fixed edge, moving $p_1$ from $q_2$ to $q_1$ causes $v_1$, $v_2$, and $w$ to move monotonically along their respective edges. Consequently, it suffices to consider only the situations in which $p_1$ lies at an endpoint of $\overline{q_1 q_2}$ or one of $v_1$, $v_2$, or $w$ changes edges. We refer to such situations as critical configurations. If, in any critical configuration, $w$ is positioned clockwise from $p_1$, then the same holds for every position of $p_1$ on $\overline{q_1 q_2}$, since $w$ moves monotonically as $p_1$ moves from $q_2$ to $q_1$. In that case, there is no triangle $\Delta$ nested between $P_t$ and $Q$ for any position of $p_1$ along the edge $\overline{q_1 q_2}$.

Suppose first that $p_1 = q_2$. In this case $w$ lies on the edge $\overline{q_2 q_3}$ as depicted in \Cref{subfig:conf1}. 
As the vertex $p_1$ moves along the edge $\overline{q_1q_2}$, the next critical configuration happens when the vertex $w$ moves from the facet $\overline{q_2q_3}$ to $\overline{q_1q_2}$ of $Q$. 
This happens when $p_1 = \left( \frac{3}{160}, \frac{9}{160} \right)$ and is depicted in \Cref{subfig:conf2}.
The second of these critical configurations happens when $p_1 = \left( \frac{13}{800}, \frac{47}{800} \right)$; at this point the vertex $v_2$ of $\Delta$ moves from the edge $\overline{q_3q_4}$ of $Q$ to the edge $\overline{q_2q_3}$.
This is depicted in \Cref{subfig:conf3}.
In \Cref{subfig:conf4}, the next configuration is depicted; when the point $p_1 = \left(\frac{17}{1120} , \frac{67}{1120} \right)$ is reached, the vertex $v_1$ moves from the edge $\overline{q_1q_4}$ to $\overline{q_3q_4}$.
And lastly, the configuration when $p_1$ reaches the vertex $q_1$ of $Q$ is depicted in \Cref{subfig:conf5}.
In each of these cases, the lines $\overline{p_1v_1}$ and $\overline{v_2 p_3}$ intersect outside $Q$ and therefore there is not triangle nested between $P_t$ and $Q$.

Next, we consider the triangles $\Delta$ such that more than one point of $\Delta$ is on the edge $\overline{q_1q_2}$.
In particular, a line segment of the edge $\overline{q_1q_2}$ is an edge of $\Delta$.
If there is a triangle nested between $P_t$ and $Q$ with a line segment on the edge $\overline{q_1q_2}$, then there is a triangle nested between $P_t$ and $Q$ with the entire edge $\overline{q_1q_2}$ of $Q$ being an edge of $\Delta$. 
If there is a triangle  nested between $P_t$ and $Q$ with the entire edge $\overline{q_1q_2}$ of $Q$ being an edge of $\Delta$, then there is a triangle with edges containing $\overline{q_1p_4}$ and $\overline{q_2p_3}$. The smallest triangle corresponding to this case is depicted in \Cref{fig:square_full_edge}, which is not contained in $Q$.

\begin{figure}[htbp!]
    \centering
    \includegraphics[width=0.45\linewidth]{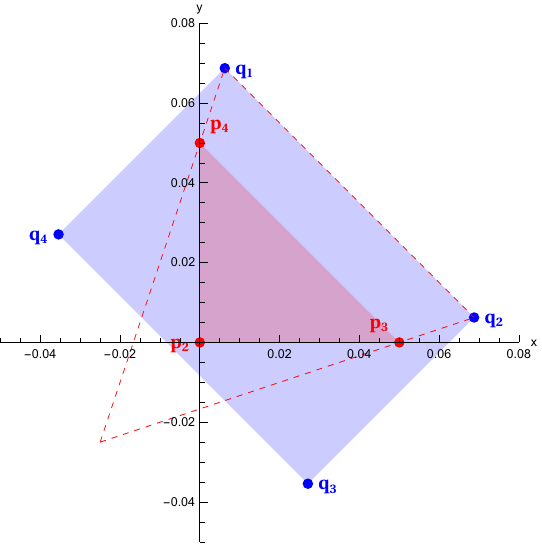}
    \caption{The triangle with $\overline{q_1q_2}$ as an edge.}
    \label{fig:square_full_edge}
\end{figure}

Therefore, we conclude that there does not exist a triangle $\Delta$ nested between $P_t$ and $Q$ regardless of the position of the vertex $p_1$.
Thus, no rank-$3$ completion of $M_E$ to a nonnegative matrix is of nonnegative rank-$3$.
\end{example}

\subsection{Entries \texorpdfstring{$(1,1)$ and $(2,2)$}{(1,1) and (2,2)} missing} \label{sec:missing_entries_11_and_22}

    In this subsection, we study the geometry of nonnegative rank-$3$ completions of partial nonnegative $4 \times 4$ matrices with the entries $(1,1)$ and $(2,2)$ missing.
    We show that rank-$3$ completions of $M_E$ correspond to a family of nested polytopes $P_t \subseteq Q_t$ in $\mathbb{R}^2$, where three facets of the outer polygon $Q_t$ and three vertices of the inner polygon $P_t$ are fixed and  the movement of the remaining facet of $Q_t$ depends on the movement of the remaining vertex of $P_t$. In~\Cref{prop:11_and_22_sufficient_condition}, we give sufficient conditions for the existence of nonnegative rank-$3$ completions for such partial matrices. In \Cref{ex:11_22_counter_example} we give an example of a partial nonnegative matrix with a rank-$3$ completion that is nonnegative but with no nonnegative rank-$3$ completions.

Consider a partial nonnegative matrix
    \begin{equation*}
        M_E = \begin{pmatrix}
            ? & m_{12} & m_{13} & m_{14} \\
            m_{21} & ? & m_{23} & m_{24} \\
            m_{31} & m_{32} & m_{33} & m_{34} \\
            m_{41} & m_{42} & m_{43} & m_{44} \\
        \end{pmatrix} \in \mathbb{R}_{\geq 0}^{E},
    \end{equation*}
    where the maximal observed submatrices $M_{134,234}$ and $M_{234,134}$ have rank three.
We will construct a rank-$3$ factorization $AB$ of a rank-$3$ completion of $M_E$.
Define
\begin{equation} \label{eq:11_22_AB}
    A=\begin{pmatrix}
        a_{11} & a_{12} & a_{13} \\
        1 & 0 & 0 \\
        0& 1 & 0 \\
        0& 0 & 1
    \end{pmatrix} \ \text{ and } \ B= \begin{pmatrix}
        m_{21} & b & m_{23} & m_{24} \\
        m_{31} & m_{32} & m_{33} & m_{34} \\
        m_{41} & m_{42} & m_{43} & m_{44} \\
    \end{pmatrix},
\end{equation}
where $a_{11},a_{12},a_{13},b \in \mathbb{R}$ are to be determined.
We assume that $\rank(M_{234,34})=2$ and $\rank(M_{34,234})=2$.

    The first row $a_1=(a_{11},a_{12},a_{13})$ of $A$ and the second column $b_2=(b, m_{32},m_{42})$ of $B$ satisfy the system of equations
    \begin{equation*}
        \begin{cases}
            a_1b_2 = m_{12},\\
            a_1b_3 = m_{13},\\
            a_1b_4 = m_{14}.
        \end{cases}  
    \end{equation*}
    We can parametrize the solutions to this system in terms of a parameter $t \in \mathbb{R}$:
    \begin{equation} \label{eqn:solution_11_22}
        \begin{split}
            &a_{11}(t) = \frac{\det(M_{134,234})}{m_{23}\det (M_{34,24})-t \det (M_{34,34})-m_{24}\det (M_{34,23})},\\
            &a_{12}(t) = \frac{t\det (M_{14,34})-m_{42} \det (M_{12,34})-m_{12}\det (M_{24,34})}{-(m_{23}\det (M_{34,24})-t \det (M_{34,34})-m_{24}\det (M_{34,23}))},\\
            &a_{13}(t) = \frac{t\det (M_{13,34})-m_{32} \det (M_{12,34})-m_{12}\det (M_{24,34})}{m_{23}\det (M_{34,24})-t \det (M_{34,34})-m_{24}\det (M_{34,23})},\\
            &b(t)=t,
        \end{split}
    \end{equation}
    for $t$ such that $m_{23}\det (M_{34,24})-t \det (M_{34,34})-m_{24}\det (M_{34,23}) \neq 0$.
    The rank-$3$ completions of $M_E$ are given by $M_t:=A_tB_t$, where $A_t$ and $B_t$ are obtained from $A$ and $B$ in~\eqref{eq:11_22_AB} by substituting $a_{11},a_{12},a_{13},b$ by $a_{11}(t),a_{12}(t),a_{13}(t),b(t)$ in~\eqref{eqn:solution_11_22}.

    Whenever $M_t=A_t B_t \geq 0$, the factorization $A_t B_t$ gives a pair of nested cones $\hat Q_t=\{x \in \mathbb{R}^3:A_t x \geq 0\}$ and $\hat P_t=\operatorname{cone}(B_t)$. By considering these pairs of nested cones for all $t \in \mathbb{R}$ such that $A_t B_t \geq 0$, we get a family of pairs of nested cones. All pairs in this family share three facets of $\hat Q_t$ and three rays of $\hat P_t$.
    The movement of the remaining facet of $\hat Q_t$ depends on the movement of the remaining ray of $\hat P_t$.

        In the above construction we assumed that $M_{234,34}$ and $M_{34,234}$ have rank two.
    If $\rank(M_{:,34})=\rank(M_{34,:})=2$, then one may use a similar construction as above and express $a_{11},a_{12},a_{13},b$ in terms of a parameter $t \in \mathbb{R}$.

    On the other hand, if $\rank(M_{:,34}) \leq 1$ or $\rank(M_{34,:}) \leq 1$, the matrix $M_E$ has a nonnegative rank at most three completion by \Cref{lem:nonnegative_rank_and_rank_equal_when_1_2} and \Cref{lem:nonnegative_rank_of_submatrix_less_than_or_equal_k}.

\begin{proposition} \label{prop:11_and_22_sufficient_condition}
    Let $E = [4] \times [4] \setminus \{(1,1),(2,2) \}$.
    Let $M_E \in \mathbb{R}_{\geq 0}^E$ be a partial nonnegative matrix with no zero rows or columns, and let $M_t =A_tB_t$ be a rank-$3$ nonnegative completion of $M_E$ where $A_t$ and $B_t$ are as above.
    Then $M_E$ has a nonnegative rank-$3$ completion if there exists $t \geq 0$ such that $M_t$ is nonnegative and 
    \begin{enumerate}
        \item  $a_{1j}(t) \geq 0$ for all $j=1,2,3$, or
        \item  among the three values $a_{11}(t), a_{12}(t), a_{13}(t)$, two are negative and one is positive, or 
        \item  among the three values $a_{11}(t), a_{12}(t), a_{13}(t)$, one is negative, one is zero, and one is positive.
    \end{enumerate}
\end{proposition}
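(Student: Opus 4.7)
The plan is to invoke the geometric characterization from \Cref{thm:existence_of_nonnegative_rank-r_completion}. Since $M_t = A_t B_t \geq 0$ by hypothesis, the nested-cone condition $\hat P_t \subseteq \hat Q_t$ is automatic, so it suffices to exhibit a $3$-simplicial cone $\hat \Delta$ with $\hat P_t \subseteq \hat \Delta \subseteq \hat Q_t$. I will show that in each of the three cases the cone $\hat Q_t$ is itself simplicial, so $\hat \Delta := \hat Q_t$ will do: if $C$ is the $3 \times 3$ matrix whose columns are the ray generators of $\hat Q_t$, then $M_t = (A_t C)(C^{-1} B_t)$ is a size-$3$ nonnegative factorization of the completion $M_t$ of $M_E$, because $A_t C \geq 0$ (the columns of $C$ lie in $\hat Q_t$) and $C^{-1} B_t \geq 0$ (the columns of $B_t$ lie in $\hat P_t \subseteq \hat \Delta$).

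Case 1 is immediate: $A_t \geq 0$ by hypothesis, and $B_t \geq 0$ because $t \geq 0$ and the other entries of $B_t$ are observed nonnegative entries of $M_E$. Hence $(A_t, B_t)$ itself is a size-$3$ nonnegative factorization; equivalently, the fourth half-space defining $\hat Q_t$ is redundant, so $\hat Q_t = \mathbb{R}^3_{\geq 0}$, which is simplicial.

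For Case 2, assume without loss of generality that $a_{11}(t) > 0$ and $a_{12}(t), a_{13}(t) < 0$. On the coordinate hyperplane $\{x_1 = 0\}$ the fourth inequality $a_{12}(t) x_2 + a_{13}(t) x_3 \geq 0$ combined with $x_2, x_3 \geq 0$ forces $x_2 = x_3 = 0$, so $\{x_1 = 0\}$ meets $\hat Q_t$ only at the origin and is not a facet. The three remaining half-spaces intersect pairwise to yield exactly three linearly independent extreme rays
\[
e_1, \qquad (|a_{12}(t)|,\, a_{11}(t),\, 0), \qquad (|a_{13}(t)|,\, 0,\, a_{11}(t)),
\]
so $\hat Q_t$ is simplicial. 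Case 3 is analogous: after a coordinate permutation arranging $a_{11}(t) > 0$, $a_{12}(t) = 0$, and $a_{13}(t) < 0$, the facet $\{x_1 = 0\}$ again collapses to a $1$-dimensional set, and the remaining three half-spaces yield the three linearly independent extreme rays $e_1, e_2, (|a_{13}(t)|, 0, a_{11}(t))$.

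The main bookkeeping obstacle is enumerating the symmetric subcases of Cases 2 and 3 obtained by permuting which of the three coefficients plays the positive, zero, or negative role; each reduces to the analysis above after permuting the identity rows $e_1, e_2, e_3$ of $A_t$, which corresponds to a relabelling of the coordinates of $\mathbb{R}^3$. The essential geometric fact driving the argument is that the presence of at least one strictly positive $a_{1j}(t)$ prevents $\hat Q_t$ from degenerating and forces exactly one coordinate hyperplane to cease being a facet, leaving $\hat Q_t$ bounded by three linearly independent facets and thus simplicial.
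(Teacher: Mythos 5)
Your proof is correct and follows essentially the same route as the paper's: both arguments observe that in each of the three sign cases the cone $\hat Q_t$ is itself simplicial and can therefore serve as the nested simplicial cone $\hat\Delta$ required by \Cref{thm:existence_of_nonnegative_rank-r_completion}. Your version is merely more explicit, computing the extreme rays directly, whereas the paper argues by classifying when the extra inequality $a_1(t)x\geq 0$ is redundant or irredundant.
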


\begin{proof}
Since $A_{234,:}$ is the $3 \times 3$ identity matrix, $\hat Q_t=\{x \in \mathbb{R}^3:A_t x \geq 0\}$ is the subset of $\mathbb{R}^3_{\geq 0}$ defined by the inequality $a_1(t) x \geq 0$. We will characterize the cases when the cone $\hat Q_t$ is simplicial. Then $\hat Q_t$ itself can be taken as the simplicial cone between $\hat P_t$ and $\hat Q_t$ and hence the nonnegative rank of $M_t$ is three by \Cref{thm:existence_of_nonnegative_rank-r_completion}. Since $M_t$ has rank three by assumption, $\hat Q_t$ is 3-dimensional. 

The first possibility is that the inequality $a_1(t) x \geq 0$ is redundant, i.e., $\hat Q_t=\mathbb{R}^3_{\geq 0}$. This corresponds to the first case when $a_{1j}(t) \geq 0$ for all $j=1,2,3$.

If $a_1(t) x \geq 0$ is irredundant, then two rays of $\hat Q_t$ lie on the hyperplane $a_1(t) x = 0$ (at least one of which is not a ray of $\mathbb{R}^3_{\geq 0}$) and there is one more ray that is also a ray of $\mathbb{R}^3_{\geq 0}$. If the two rays of $\hat Q_t$ that lie on the hyperplane $a_1(t) x = 0$ are not rays of $\mathbb{R}^3_{\geq 0}$, then among the three values $a_{11}(t), a_{12}(t), a_{13}(t)$, two are negative and one is positive. Otherwise, one ray of $\hat Q_t$ that lies on the hyperplane $a_1(t) x = 0$ is also a ray of $\mathbb{R}^3_{\geq 0}$ in which case among the three values $a_{11}(t), a_{12}(t), a_{13}(t)$, one is negative, one is zero, and one is positive. 
\end{proof}

\begin{example}\label{ex:11_22_counter_example}
    This example shows that there exists a partial nonnegative matrix $M_E \in \mathbb{R}^{E}$ with the missing entries $(1,1)$ and $(2,2)$ which does not have a nonnegative rank-$3$ completion, although it has a rank-$3$ completion with nonnegative entries.

    Consider the partial matrix
    \begin{equation*}
        M_E= \begin{pmatrix}
        ? & 1  &  112/425  &  1 \\
        1/10 &  ? &  1/100 &  7/20\\
        1/10 & 10 & 9/10 & 3/5 \\
        4/5 &  9 & 9/100 &  1/20
    \end{pmatrix}
    \end{equation*}
    A rank-$3$ completion of $M_E$ with nonnegative entries is given by $A_t B_t$, where
    \begin{equation*}
        A_t = \begin{pmatrix}
            \frac{100601}{42007 + 153 t} & \frac{12055 + 1306t}{42007 + 153 t} & -\frac{3 (2909 + 4204 t)}{17 (2471 + 9 t)} \\
            1&0&0\\
            0&1&0\\
            0&0&1
        \end{pmatrix} \ \text{ and } \ B_t= \begin{pmatrix}
            1/10 &  t &  1/100 &  7/20\\
        1/10 & 10 & 9/10 & 3/5 \\
        4/5 &  9 & 9/100 &  1/20
        \end{pmatrix}
    \end{equation*}
    with $0 \leq t \leq \frac{4284}{9959}$.
    The family $(A_t,B_t)$ defines nested polytopes $P_t \subseteq Q_t$ in $\mathbb{R}^2$ by taking the cones $\operatorname{cone}(B_t) \subseteq \{x \in \mathbb{R}^3 \mid A_tx\geq 0\} \subseteq \mathbb{R}^3$ and considering the affine slice of the cone defined by the plane $x+y+z=1$.
    The outer polytope $Q_t$ is given by 
    \begin{align*}
        Q_t = \{(x,y) \in \mathbb{R}^2 \mid A_t(x,y,-x-y+1) \geq 0\}
    \end{align*}
    with edges given by 
    \begin{align*}
    &\frac{-8727-12612t + (109328 +12612t) x + (20782 +13918t) y}{17 (2471 + 9 t)} \geq 0,\\
        &x \geq 0, y\geq 0, 1-x-y \geq 0.
    \end{align*}
    Similarly, the inner polytope $P_t$ is $P_t = \operatorname{conv}(p_1,p_2,p_3,p_4)$, where 
    \begin{align*}
        &p_1 = \left( \frac{1}{10}, \frac{1}{10} \right), \ p_2= \left( \frac{t}{19 + t}, \frac{10}{19+t}\right), \\ 
        &p_3 = \left( \frac{1}{100}, \frac{9}{10} \right), \ p_4 = \left( \frac{7}{20}, \frac{3}{5} \right).
    \end{align*}
    The polytopes $P_0$, $P_c$, $Q_0$ and $Q_c$ where $c= \frac{4284}{9959}$ are depicted in \Cref{fig:11_22_boundary_polytopes}.
    \begin{figure}[htbp!]
    \centering
    \includegraphics[width=0.5\linewidth]{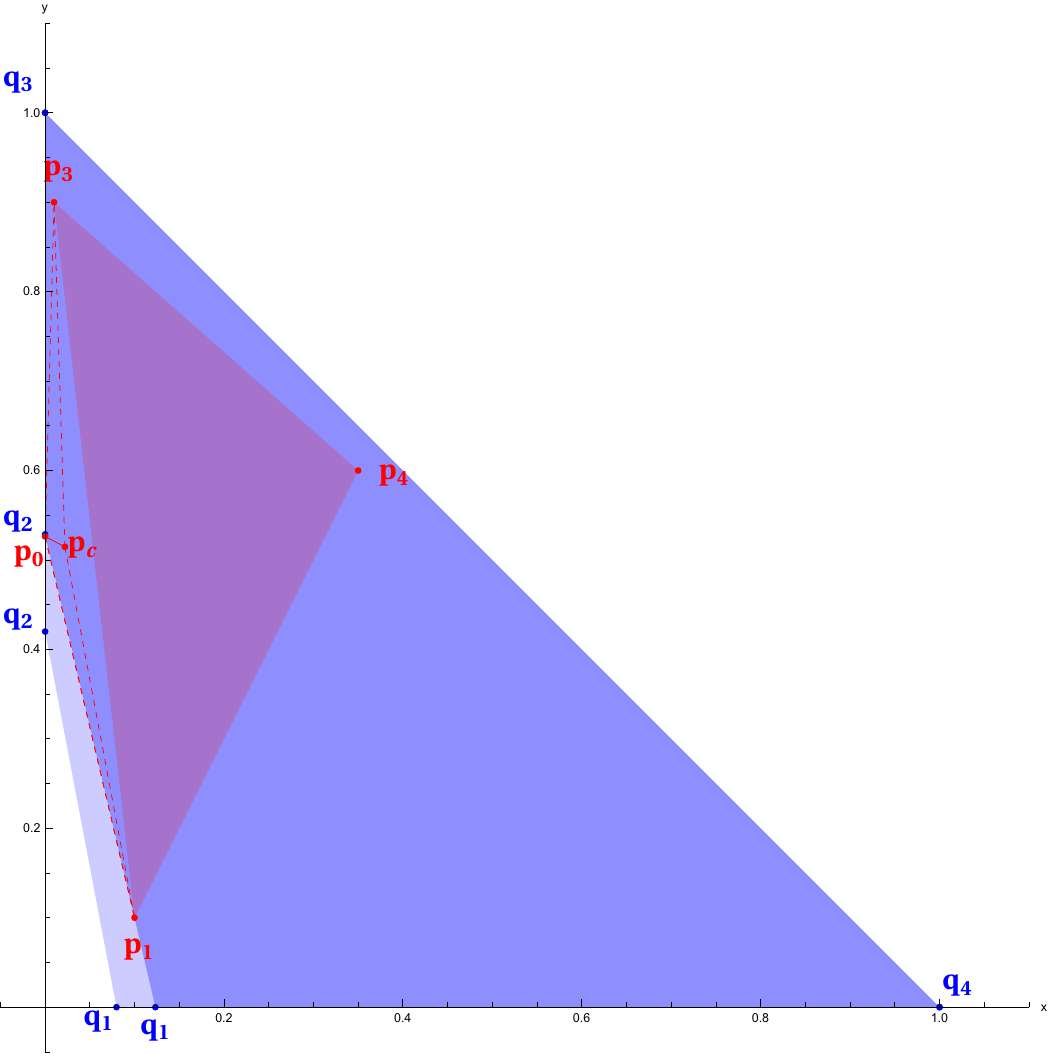}
    \caption{The polytopes $P_t$ (in red) and $Q_t$ (in blue) from \Cref{ex:11_22_counter_example} for the values $t=0$ and $t=c$. 
    The moving edge of $Q_t$ is the edge $\overline{q_1q_2}$, and the moving vertex of $P_t$ is the vertex $p_2$, which moves along the line segment $\overline{p_0p_c}$. 
    The polytope $Q_0$ (in lighter blue) always contains the polytope $Q_c$ (in darker blue).
    Similarly, the polytope $P_0$ always contains the polytope $P_c$.}
    \label{fig:11_22_boundary_polytopes}
    \end{figure}

    We show that for all $0 \leq t \leq c$ there does not exist a triangle $\Delta$ such that $P_t \subseteq \Delta \subseteq Q_t$.
    Assume the contrary and that there exists a $0 \leq \tilde t \leq c$ such that there exists a triangle $\Delta$ nested between $P_{\tilde t}$ and $Q_{\tilde t}$, that is $P_{\tilde t} \subseteq \Delta \subseteq Q_{\tilde t}$.
    Since for all $0 \leq t_1 \leq t_2 \leq c$ we have $P_{t_2} \subseteq P_{t_1}$ and $Q_{t_2} \subseteq Q_{t_1}$, it follows that in particular $P_c \subseteq P_{\tilde t}$ and $Q_{\tilde t} \subseteq Q_0$.
    Hence, the triangle $\Delta$ is such that $P_c \subseteq \Delta \subseteq Q_0$.
    However, using the geometric algorithm in  \Cref{cor:rank-3_geometric_algorithm} we can computationally show that there does not exist a triangle nested between $P_c$ and $Q_0$.
    The computations can be found in the git repository of the project.
    Therefore, there does not exist $t \in [0,c]$ such that there is a triangle $\Delta$ satisfying $P_t \subseteq \Delta \subseteq Q_t$.
\end{example}

\bibliographystyle{alpha}
\bibliography{references}

@inproceedings{thanh2024minimum,
  title={Minimum-Volume Nonnegative Matrix Completion},
  author={Thanh, Olivier Vu and Gillis, Nicolas},
  booktitle={2024 32nd European Signal Processing Conference (EUSIPCO)},
  pages={2452--2456},
  year={2024},
  organization={IEEE}
}

@article{singer2010uniqueness,
  title={Uniqueness of low-rank matrix completion by rigidity theory},
  author={Singer, Amit and Cucuringu, Mihai},
  journal={SIAM Journal on Matrix Analysis and Applications},
  volume={31},
  number={4},
  pages={1621--1641},
  year={2010},
  publisher={SIAM}
}

@article{tomasi1992shape,
  title={Shape and motion from image streams under orthography: a factorization method},
  author={Tomasi, Carlo and Kanade, Takeo},
  journal={International Journal of Computer Vision},
  volume={9},
  pages={137--154},
  year={1992},
  publisher={Springer}
}

@article{koren2009matrix,
  title={Matrix factorization techniques for recommender systems},
  author={Koren, Yehuda and Bell, Robert and Volinsky, Chris},
  journal={Computer},
  volume={42},
  number={8},
  pages={30--37},
  year={2009},
  publisher={IEEE}
}

@article{candes2012exact,
  title={Exact matrix completion via convex optimization},
  author={Candes, Emmanuel and Recht, Benjamin},
  journal={Communications of the ACM},
  volume={55},
  number={6},
  pages={111--119},
  year={2012},
  publisher={ACM New York, NY, USA}
}

@article{keshavan2010matrix,
  title={Matrix completion from a few entries},
  author={Keshavan, Raghunandan H and Montanari, Andrea and Oh, Sewoong},
  journal={IEEE Transactions on Information Theory},
  volume={56},
  number={6},
  pages={2980--2998},
  year={2010},
  publisher={IEEE}
}

@inproceedings{keshavan2008learning,
  title={Learning low rank matrices from {$O(n)$} entries},
  author={Keshavan, Raghunandan H and Montanari, Andrea and Oh, Sewoong},
  booktitle={2008 46th Annual Allerton Conference on Communication, Control, and Computing},
  pages={1365--1372},
  year={2008},
  organization={IEEE}
}

@article{kiraly2015algebraic,
  title={The algebraic combinatorial approach for low-rank matrix completion.},
  author={Kir{\'a}ly, Franz J and Theran, Louis and Tomioka, Ryota},
  journal={Journal of Machine Learning Research},
  volume={16},
  number={1},
  pages={1391--1436},
  year={2015}
}

@inproceedings{cohen1989ranks,
  title={Ranks of completions of partial matrices},
  author={Cohen, Nir and Johnson, Charles R and Rodman, Leiba and Woerdeman, Hugo J},
  booktitle={The Gohberg Anniversary Collection: Volume I: The Calgary Conference and Matrix Theory Papers},
  pages={165--185},
  year={1989},
  organization={Springer}
}

@article{hadwin2006rank,
  title={Rank-one completions of partial matrices and completely rank-nonincreasing linear functionals},
  author={Hadwin, Don and Harrison, K and Ward, Jo},
  journal={Proceedings of the American Mathematical Society},
  volume={134},
  number={8},
  pages={2169--2178},
  year={2006}
}

@article{kubjas2015fixed,
 author = {Kubjas, Kaie and Robeva, Elina and Sturmfels, Bernd},
 journal = {The Annals of Statistics},
 pages = {422--461},
 publisher = {JSTOR},
 title = {Fixed points of the {EM} algorithm and nonnegative rank boundaries},
 year = {2015}
}

@article{cohen1993nonnegative,
  title={Nonnegative ranks, decompositions, and factorizations of nonnegative matrices},
  author={Cohen, Joel E and Rothblum, Uriel G},
  journal={Linear Algebra and its Applications},
  volume={190},
  pages={149--168},
  year={1993},
  publisher={Elsevier}
}

@article{vavasis2010complexity,
  title={On the complexity of nonnegative matrix factorization},
  author={Vavasis, Stephen A},
  journal={SIAM Journal on Optimization},
  volume={20},
  number={3},
  pages={1364--1377},
  year={2010},
  publisher={SIAM}
}

@article{kubjas2017matrix,
 author = {Kubjas, Kaie and Rosen, Zvi},
 journal = {Journal of Algebraic Statistics},
 number = {1},
 pages = {1--21},
 title = {Matrix completion for the independence model},
 volume = {8},
 year = {2017}
}

@article{BERNSTEIN20171,
title = {Completion of tree metrics and rank 2 matrices},
journal = {Linear Algebra and its Applications},
volume = {533},
pages = {1-13},
year = {2017},
issn = {0024-3795},
doi = {https://doi.org/10.1016/j.laa.2017.07.016},
url = {https://www.sciencedirect.com/science/article/pii/S0024379517304366},
author = {Daniel Irving Bernstein},
keywords = {Low-rank matrix completion, Algebraic matroids, Tropical geometry, Tree-metric completion}
}

@article{kahle2017geometry,
 author = {Kahle, Thomas and Kubjas, Kaie and Kummer, Mario and Rosen, Zvi},
 journal = {SIAM Journal on Applied Algebra and Geometry},
 number = {1},
 pages = {200--221},
 publisher = {SIAM},
 title = {The geometry of rank-one tensor completion},
 volume = {1},
 year = {2017}
}

@book{cox1997ideals,
  title={Ideals, varieties, and algorithms},
  author={Cox, David and Little, John and O'Shea, Donal},
  year={1997},
  publisher={Springer}
}

@article{rosen2020algebraic,
  title={Algebraic matroids in action},
  author={Rosen, Zvi and Sidman, Jessica and Theran, Louis},
  journal={The American Mathematical Monthly},
  volume={127},
  number={3},
  pages={199--216},
  year={2020},
  publisher={Taylor \& Francis}
}

@article{cai2010singular,
  title={A singular value thresholding algorithm for matrix completion},
  author={Cai, Jian-Feng and Cand{\`e}s, Emmanuel J and Shen, Zuowei},
  journal={SIAM Journal on Optimization},
  volume={20},
  number={4},
  pages={1956--1982},
  year={2010},
  publisher={SIAM}
}

@phdthesis{fazel2002matrix,
  title={Matrix rank minimization with applications},
  author={Fazel, Maryam},
  year={2002},
  school={Stanford University}
}

@article{udellbigdata,
author = {Udell, Madeleine and Townsend, Alex},
title = {Why Are Big Data Matrices Approximately Low Rank?},
journal = {SIAM Journal on Mathematics of Data Science},
volume = {1},
number = {1},
pages = {144-160},
year = {2019},
doi = {10.1137/18M1183480}
}

@inproceedings{srebro2003weighted,
  title={Weighted low-rank approximations},
  author={Srebro, Nathan and Jaakkola, Tommi},
  booktitle={Proceedings of the 20th International Conference on Machine Learning (ICML-03)},
  pages={720--727},
  year={2003}
}

@article{nguyen2019low,
  title={Low-rank matrix completion: A contemporary survey},
  author={Nguyen, Luong Trung and Kim, Junhan and Shim, Byonghyo},
  journal={IEEE Access},
  volume={7},
  pages={94215--94237},
  year={2019},
  publisher={IEEE}
}

@article{kiraly2013error,
  title={Error-minimizing estimates and universal entry-wise error bounds for low-rank matrix completion},
  author={Kiraly, Franz and Theran, Louis},
  journal={Advances in Neural Information Processing Systems},
  volume={26},
  year={2013}
}

@article{SONG2020106300,
title = {Nonnegative low rank matrix approximation for nonnegative matrices},
journal = {Applied Mathematics Letters},
volume = {105},
pages = {106300},
year = {2020},
issn = {0893-9659},
doi = {https://doi.org/10.1016/j.aml.2020.106300},
url = {https://www.sciencedirect.com/science/article/pii/S0893965920300938},
author = {Song, Guang-Jing and Ng, Michael K},
keywords = {Nonnegative matrix, Low-rank approximation, Manifolds, Projections}
}

@article{guglielmi2020efficient,
  title={An efficient method for non-negative low-rank completion},
  author={Guglielmi, Nicola and Scalone, Carmela},
  journal={Advances in Computational Mathematics},
  volume={46},
  number={2},
  pages={31},
  year={2020},
  publisher={Springer}
}

@article{xu2012alternating,
  title={An alternating direction algorithm for matrix completion with nonnegative factors},
  author={Xu, Yangyang and Yin, Wotao and Wen, Zaiwen and Zhang, Yin},
  journal={Frontiers of Mathematics in China},
  volume={7},
  number={2},
  pages={365--384},
  year={2012},
  publisher={Springer}
}

@article{limillimeter,
author={Li, Xingjian and Fang, Jun and Li, Hongbin and Wang, Pu},
journal={IEEE Transactions on Wireless Communications}, 
title={Millimeter Wave Channel Estimation via Exploiting Joint Sparse and Low-Rank Structures}, 
year={2018},
volume={17},
number={2},
pages={1123-1133},
keywords={Channel estimation;Compressed sensing;Array signal processing;Scattering;Receivers;Complexity theory;Antenna arrays;mmWave channel estimation;angular spread;jointly sparse and low-rank;compressed sensing},
doi={10.1109/TWC.2017.2776108}}

@article{linonlocal,
author = {Li, Wei and Zhao, Lei and Lin, Zhijie and Xu, Duanqing and Lu, Dongming},
title = {Non-Local Image Inpainting Using Low-Rank Matrix Completion},
journal = {Computer Graphics Forum},
volume = {34},
number = {6},
pages = {111-122},
keywords = {image processing, computational photography, image inpainting, low rank matrix completion, non-local means, I.4.4 Image Processing and Computer Vision: Restoration—Inverse filtering; I.4.5 Image Processing and Computer Vision: Reconstruction—Transform methods},
doi = {https://doi.org/10.1111/cgf.12521},
url = {https://onlinelibrary.wiley.com/doi/abs/10.1111/cgf.12521},
eprint = {https://onlinelibrary.wiley.com/doi/pdf/10.1111/cgf.12521},
year = {2015}
}

\end{document}